\title[Emergent behaviors of SCKM and its graph limit]{Emergent behaviors of the singular continuum Kuramoto model and Its Graph Limit}
\author[L. Chen]{Li Chen}
\address[Li Chen]{\newline School of Business Informatics and Mathematics \newline
University of Mannheim, Mannheim 68159, Germany}
\email{li.chen@uni-mannheim.de}
\author[S.-Y. Ha]{Seung-Yeal Ha}
\address[Seung-Yeal Ha]{\newline Department of Mathematical Sciences and Research Institute of Mathematics \newline  
Seoul National University, Seoul 08826, Republic of Korea}
\email{syha@snu.ac.kr}
\author[X. Wang]{Xinyu Wang}
\address[Xinyu Wang]{\newline Department of Mathematical Sciences \newline  
Seoul National University, Seoul 08826, Republic of Korea}
\email{wangxinyu97@snu.ac.kr}
\author[V. Zhidkova]{Valeriia Zhidkova}
\address[Valeriia Zhidkova]{\newline School of Business Informatics and Mathematics  \newline
University of Mannheim, Mannheim 68159, Germany}
\email{valeriia.zhidkova@uni-mannheim.de}
\newtheorem{theorem}{Theorem}[section]
\newtheorem{lemma}{Lemma}[section]
\newtheorem{corollary}{Corollary}[section]
\newtheorem{proposition}{Proposition}[section]
\newtheorem{remark}{Remark}[section]
\newcommand{\bbr}{\mathbb R}
\def\charf {\mbox{{\text 1}\kern-.24em {\text l}}}
\newcommand*\di{\mathop{}\!\mathrm{d}}
\begin{document}

\date{\today}

\subjclass{34C15, 45J05, 45M05} \keywords{Emergent dynamics, Kuramoto oscillators, nonlocal interaction, singular effect}

\thanks{\textbf{Acknowledgement.}
The work of S.-Y. Ha is supported by National Research Foundation(NRF) grant funded by the Korea government(MIST) (RS-2025-00514472), and the work of X. Wang was  partially supported by the Natural Science Foundation of China (grants 123B2003), the China Postdoctoral Science Foundation (grants 2025M774290), and Heilongjiang Province Postdoctoral Funding (grants  LBH-Z24167). The work of L. Chen is partially supported by Deutsche Forschungsgemeinschaft (DFG, German Research Foundation – 547277619) and the National Natural Science Foundation of China (12171218). The work of V. Zhidkova is partially supported by Deutsche Forschungsgemeinschaft (DFG, German Research Foundation – 547277619).}

\begin{abstract}
We study the emergent dynamics of the singular continuum Kuramoto model (in short, SCKM) and its graph limit. The SCKM takes the form of an integro-differential equation exhibiting two types of nonlocal singularities: a nonlocal singular interaction weight and a nonlocal singular alignment force. The natural frequency function determines the emergent dynamics of the SCKM, and we emphasize that singularity plays a crucial role in the occurrence of sticking phenomena. For the identical natural frequency function, we derive the complete phase synchronization in finite time under a suitable set of conditions for system parameters and initial data. In contrast, for a nonidentical natural frequency function, we show the emergence of practical synchronization, meaning that the phase diameter is proportional to the inverse of coupling strength asymptotically.  Furthermore, we rigorously establish a graph limit from the singular Kuramoto model with a finite system size to the SCKM. We also provide several numerical simulations to illustrate our theoretical results.
\end{abstract}
\maketitle \centerline{\date}

\section{Introduction} \label{sec:1}
\setcounter{equation}{0}
Synchronization is a kind of coherent collective behavior appearing in oscillatory systems, where individual units adjust their rhythms (phases and frequencies) through the mutual couplings or common forcing. This collective phenomenon is ubiquitous in biology, chemistry, physics, and social systems, with canonical examples including the synchronous flashing of fireflies \cite{B-B1966}, coordinated oscillations in insulin secretion and glucose infusion \cite{S-K1995}, and pacemaker synchronization in the heart \cite{Winfree1977}. The first modern mathematical theory originated from the phase-reduction framework of Winfree \cite{Winfree1977}, and  then Yoshiki Kuramoto proposed the Kuramoto model to describe the synchronization of weakly coupled oscillators \cite{Kuramoto1975}. We refer to the survey articles \cite{A-B2005, A-B2019, B-P2012, D-B2014, HaKo2016, T-T1998, B-H2010} for the crash introduction of collective behaviors. 

In this paper, we focus on the emergent dynamics and the graph limit of Kuramoto oscillators under the double singular effects in communication and coupling. To set up the stage, let $(\theta_i, \nu_i)$ be the time-dependent phase and stationary natural frequency of  the $i$-th Kuramoto oscillator. Then, their dynamics is governed by the Cauchy problem for the Kuramoto model:
 \begin{equation} \label{A-1}
\begin{cases}
\displaystyle \dot{\theta}_i(t) = \nu_i + \frac{\kappa}{N} \sum_{k=1}^{N} \sin(\theta_{k}(t) - \theta_i(t)), \quad t>0, \\
\displaystyle \theta_i(0) = \theta_i^{\mathrm{in}}, \quad i\in[N]:=\{1,2,...,N\},
\end{cases}
 \end{equation}
where $\kappa>0$ is the nonnegative coupling strength. \newline 
 
 Note that the emergent dynamics of the Cauchy problem \eqref{A-1} has been extensively studied in a series of works \cite{C-S2023, D-B2012, D-B2011, HaKim2016}, and we also refer to the survey papers \cite{Erm-1985,H-R,D-B2014,A-B2005}.  When the number of particles tends to infinity, how to describe the evolution of the limiting equation is an important question in statistical physics. Broadly speaking, several frameworks are used to describe emergent dynamics in infinite particle systems. A classical approach concerns mean-field type interactions associated with all-to-all coupling, where one derives a kinetic equation via a mean-field limit \cite{Poyato2019,B-C-M,w6,w9, Lucon2014}. A second approach treats systems on dense graphs, where graph limit techniques lead to nonlocal continuum equations \cite{ChoHa2023,B-D2022,KoHa2025,MWX2025}. This framework is well-suited to dense deterministic graphs, but does not capture more general heterogeneous or measure-valued limits. To address such situations, extended mean-field formulations have been developed \cite{KV2018,KuehnXu2022, JPS2024}. Finally, one may study dynamics directly as infinite systems of ODEs posed on infinite graphs \cite{Bra-2019,w1,w5}, without passing through a continuum approximation. In this work, we consider the second approximation methodology by focusing on the emergent dynamics and graph limit of singular continuum Kuramoto model (in short, SCKM) as follows:
\begin{equation} \label{A-2}
	\begin{cases}
		\partial_t \theta(t, x) = \nu(x) + \kappa \displaystyle\int_{\Omega} \psi(x,y) h( \theta(t, y) - \theta(t, x)) \di y, \quad t > 0,~~x \in \Omega, \\
		\theta(0, x) = \theta^{\mathrm{in}}(x), 
	\end{cases}
\end{equation}
where $\psi:~\Omega \times \Omega \to [0, \infty)$ is a singular interaction weight and $h:~{\mathbb R} \to {\mathbb R}$ is a singular alignment force kernel. They are given by the following explicit forms:
\begin{align}
	\label{A-3}
	\psi (x,y) :=\frac{1}{|x-y|^\beta}  \quad (0\le \beta<1), \quad h(\theta) := 
	\begin{cases}
	\displaystyle	\frac{\sin (\theta)}{|\theta|_o^\alpha} \ &\mathrm{for} \ \theta \neq 0 \quad  (0\le \alpha<1), \\
		\displaystyle	0 \ &\mathrm{for} \ \theta=0,
	\end{cases}
\end{align}
where 
\begin{align*}
|\theta|_o := |\bar \theta| \quad \mathrm{for} \ \bar \theta \equiv \theta \mod 2\pi \quad \mathrm{and} \ \bar \theta \in (-\pi, \pi].
\end{align*}
We notice that $h$ is $2\pi$-periodic on $\bbr$. Throughout this paper, we consider the following two domains for $\Omega$:
\[ \mbox{Either}~\Omega = \bbr^d \quad \mbox{or} \quad \Omega = [0, 1]. \]
In this work, we address the following three questions for the SCKM \eqref{A-2}:  \newline
\begin{itemize}
\item
(Q1):~ What will happen to the SCKM on $\Omega = \bbr^d$ if we set $\psi$ to be the approximation of Dirac delta distribution?
\vspace{0.1cm}
\item
(Q2):~Under what conditions on system $\nu, \psi$ and $h$ and initial data $\theta^{\mathrm{in}}$, can we show the emergence of collective behaviors for the SCKM on $\Omega = [0,1]$?
\vspace{0.1cm}
\item
(Q3):~Can we approximate the SCKM on $\Omega = [0,1]$ via the finite Kuramoto lattice model using the juxtaposition of piecewise functions? 
\end{itemize}

\vspace{0.1cm}

The purpose of this paper is to deal with the aforementioned questions. More precisely, our main results are three-fold. First, we show that the solution to \eqref{A-2} with $\psi_\varepsilon (x,y) = \eta_\varepsilon(x-y)$ converges to the solution of the linear equation:
\begin{align*}
	\begin{cases}
		\partial_t \theta(t, x) = \nu(x),  & t > 0, \quad x \in \bbr^d, \\
		\theta(0, x) = \theta^{\mathrm{in}}(x),
	\end{cases}
\end{align*}
as $\varepsilon \rightarrow 0$ (see Proposition \ref{P3.1}). 

Second, we establish the global well-posedness of \eqref{A-2} on $\Omega = [0,1]$ in which Lipschitz continuity is deprived because of the presence of singularities (see Theorem \ref{T3.1} and Remark \ref{R3.1}). We notice that the global well-posedness result easily extends to a bounded domain $\Omega \subset \bbr^d$. Then, we focus on the emergent dynamics of the SCKM \eqref{A-2} with constant natural frequency function and nonidentical natural frequencies, respectively. Compared to references \cite{KoHa2025, Poyato2019, P-P-J2021}, the main technical difficulty lies in the fact that \eqref{A-2} contains the double nonlocal singularities that depend on the position $x\in \Omega$ and phase $\theta$. In what follows, we briefly summarize emergent dynamics results. Consider the system parameters and initial data satisfying the following set of conditions:
\[  \alpha, \beta \in (0,1), \quad \theta^{\mathrm{in}} \in L^2(\Omega), \quad \nu \equiv 0, \quad 0<\mathcal{D}(\theta^{\mathrm{in}})<\pi, \]
where $\mathcal{D}(\theta)$ is the functional measuring phase coherence: 
\[  \mathcal{D}(\theta(t)) = \sup_{x,y\in \Omega} |\theta(t,x) - \theta(t,y)|. \]
Then, complete synchronization emerges in finite-time:
\begin{align*}
	\mathcal{D}(\theta(t)) \leq \left(\mathcal{D}(\theta^{\mathrm{in}})^{\alpha} - \alpha \kappa \frac{\sin\big( \mathcal{D}(\theta^\mathrm{in}) \big)}{\mathcal{D}(\theta^\mathrm{in})}  \frac{2-2^{\beta}}{1-\beta} t \right)^{\frac{1}{\alpha}}.
\end{align*}
This implies that there exists a positive constant $t_c$ such that 
\[  \lim_{t \to t_c-} \mathcal{D}(\theta(t)) = 0. \]
We refer to Theorem \ref{T3.2} for details. In contrast, for nonidentical natural frequencies with $\nu \in B(\Omega)$, where $B(\Omega)$ denotes the set of bounded functions on $\Omega$, we have the following practical synchronization:
\[ \lim_{\kappa \to \infty} \limsup_{t\to \infty} \mathcal{D}(\theta(t)) = 0.\]
See Theorem \ref{T3.3} for details. 

Finally, we establish the graph limit from the finite Kuramoto system to the corresponding continuum limit SCKM \eqref{A-2} on $\Omega = [0,1]$. The main difficulty lies in the fact that H\"older continuity of $h$ does not suffice to do that. For this reason, we are required to consider the angles modulo $2\pi$, which allows us to impose one-sided Lipschitz continuity on $-h$. Then, we can establish the finite-time graph limit for a heterogeneous system and the uniform-in-time graph limit for a homogeneous system. We refer to Theorem \ref{T4.1} for details. \newline

The rest of this paper is organized as follows. In Section \ref{sec:2}, we study preparatory lemmas of the singular communication weight function and singular interaction function and Lebesgue functions. In Section \ref{sec:3}, we study the well-posedness and emergent dynamics of the SCKM. In Section \ref{sec:4}, we present a rigorous verification of the graph limit from the lattice model to the corresponding continuum equation. In Section \ref{sec:5}, we provide several numerical simulation results. Finally, Section \ref{sec:6} is devoted to a brief summary of main results and remaining issues for a future work. 

\vspace{0.5cm}

\noindent {\bf Notations:}~Let $\nu = \nu (x),~\theta = \theta(t,x)$ and $\psi = \psi(x,y)$ be measurable functions on $\Omega, \mathbb{R}_+ \times \Omega$ and $\Omega \times \Omega$, respectively. Then, for any $p \in [1,\infty],~t \in \bbr_+,~x \in \Omega$, 
\begin{align*}
&\| \nu \|_p := \| \nu \|_{L^p (\Omega)}, \quad \| \theta (t) \|_p := \| \theta (t) \|_{L^p (\Omega)}, \\ 
&\| \psi \|_p := \| \psi \|_{L^p (\Omega^2)}, \quad \| \psi (x,\cdot) \|_p := \| \psi (x,\cdot) \|_{L^p (\Omega)}.
\end{align*}
We define natural frequency diameter and shift of function $f: \bbr^d \rightarrow \bbr$ as follows:
\begin{align*}
\mathcal{D}(\nu) = \sup_{x,y \in \Omega} |\nu(x) - \nu(y)|, \quad (\tau_v f)(x) = f(x- v), \quad x\in \bbr^d,\quad v\in \bbr^d.
\end{align*}
If $f$ takes values on $D\subset \bbr^d$, then we can define the zero extension $\tilde{f}$ of $f$ as follows:
\begin{align*}
\tilde{f} (x)= 
\begin{cases}
f(x) , \quad & x\in D, \\
0,  &\mathrm{otherwise}.
\end{cases}
\end{align*}
Then we use the following simplified notation:
\begin{align*}
\int_D |f(x-v) - f(x)| \di x := \int_D |\tilde{f}(x-v) - \tilde{f}(x)| \di x.
\end{align*}
We denote 
\begin{align*}
\bbr_+ := [0, \infty) \quad \mathrm{and} \quad  B(\Omega) = \lbrace f: \Omega \to \bbr \ \mathrm{is} \ \mathrm{bounded} \rbrace.
\end{align*}
Throughout the paper, for a set $\Omega$, we denote by $|\Omega|$ the Lebesgue measure of the set $\Omega$. 

\section{Preliminaries} \label{sec:2}
\setcounter{equation}{0}
In this section, we study several a priori estimates to be used for later sections. More precisely, we deal with the translation invariance, properties of singular interaction function and singular communication weight function. We also provide preparatory lemmas for the $L^p$ functions.
\subsection{Translation invariance} \label{sec:2.1}
In this subsection, we study the translation invariance of the SCKM \eqref{A-2}. For a bounded domain with $ |\Omega| < \infty$, we define averages for phase and natural frequency and their fluctuations around the averages:~for $(t,x) \in \bbr_+ \times \Omega$, 
\begin{align}
\begin{aligned} \label{B-1}
& {\bar \theta} (t) := \frac{1}{|\Omega|} \int_\Omega \theta (t,x) \di x, \quad {\bar \nu} :=  \frac{1}{|\Omega|}  \int_\Omega \nu (x) \di x, \\
& \tilde{\theta} (t,x) := \theta(t,x) - {\bar \nu}  t, \quad \tilde{\nu} (x) :=  \nu(x) - {\bar \nu}.
\end{aligned}
\end{align}
Then, it is easy to see that 
\[
\int_{\Omega}  \tilde{\nu} (x)  \di x = 0. 
\]
\begin{lemma} \label{L2.1}
Let $\theta = \theta(t,x)$ be a global solution to \eqref{A-2} on a bounded domain $\Omega$. Then, it holds
\[
\partial_t \tilde{\theta} (t,x) =  {\tilde \nu}(x) +  \kappa \displaystyle\int_{\Omega} \psi(x,y) h( {\tilde \theta}(t, y) - {\tilde \theta}(t, x)) \di y, \quad 
\frac{\di}{\di t} \bar \theta (t) = {\bar \nu}, \quad t > 0, \ x\in \Omega.
\]
\end{lemma}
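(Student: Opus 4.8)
The proof is a direct computation that rests on two structural properties of the data in \eqref{A-3}: the spatial symmetry $\psi(x,y) = \psi(y,x)$, immediate from $\psi(x,y) = |x-y|^{-\beta}$, and the oddness $h(-\theta) = -h(\theta)$, which follows from the oddness of $\sin$, the evenness of $\theta \mapsto |\theta|_o$, and $h(0) = 0$. The plan is first to read off the equation for $\tilde\theta$ and then to integrate \eqref{A-2} in $x$ and exploit these symmetries to kill the interaction term.

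For the equation for $\tilde\theta$: since $\tilde\theta(t,x) = \theta(t,x) - \bar\nu t$ differs from $\theta(t,x)$ only by a function of $t$, we have $\partial_t \tilde\theta(t,x) = \partial_t\theta(t,x) - \bar\nu$ and, crucially, the phase differences are unchanged, $\tilde\theta(t,y) - \tilde\theta(t,x) = \theta(t,y) - \theta(t,x)$ for all $x,y \in \Omega$. Substituting both identities into \eqref{A-2} and using $\nu(x) - \bar\nu = \tilde\nu(x)$ gives the asserted integro-differential equation for $\tilde\theta$. For the average, integrating \eqref{A-2} over $\Omega$ and dividing by $|\Omega|$ (differentiation under the integral sign being justified below) yields
\[
\frac{\di}{\di t}\bar\theta(t) = \bar\nu + \frac{\kappa}{|\Omega|} \int_\Omega \int_\Omega \psi(x,y)\, h\big(\theta(t,y) - \theta(t,x)\big)\,\di y\,\di x.
\]
The double integral vanishes: since $|h| \le \pi^{1-\alpha}$ is bounded and $\psi \in L^1(\Omega^2)$ for $\beta < 1$ on a bounded domain, the integrand is absolutely integrable on $\Omega^2$, so Fubini's theorem applies; relabeling $x \leftrightarrow y$ and using $\psi(y,x) = \psi(x,y)$ together with $h(\theta(t,x)-\theta(t,y)) = -h(\theta(t,y)-\theta(t,x))$ shows the integral equals its own negative, hence is $0$. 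This gives $\frac{\di}{\di t}\bar\theta(t) = \bar\nu$.

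The only genuine point requiring care — and the place where I would lean on the preparatory estimates of Section \ref{sec:2} — is the legitimacy of exchanging $\partial_t$ with $\int_\Omega$ and of applying Fubini in the presence of the singular weight $\psi$. Both reduce to the same two facts already used: $h$ is globally bounded on $\bbr$ by $\pi^{1-\alpha}$ (so the right-hand side of \eqref{A-2} is bounded uniformly on compact time intervals, providing an integrable dominating function in $x$), and $\psi(x,\cdot) \in L^1(\Omega)$ as well as $\psi \in L^1(\Omega^2)$. With these in hand, no further idea beyond the symmetry/antisymmetry bookkeeping is needed, and the lemma follows.
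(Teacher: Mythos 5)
Your proof is correct and follows essentially the same route as the paper: the first identity is read off from the invariance of phase differences under a time-dependent shift, and the second follows from the $x\leftrightarrow y$ antisymmetry of $\psi(x,y)\,h(\theta(t,y)-\theta(t,x))$, which makes the double integral vanish. The only difference is that you explicitly justify the Fubini swap and differentiation under the integral sign (using boundedness of $h$ and $\psi\in L^1(\Omega^2)$), which the paper leaves implicit; your bound $|h|\le\pi^{1-\alpha}$ is valid, though the paper's Lemma~\ref{L2.3} gives the sharper bound $|h|\le 1$.
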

\begin{proof}
(i)~First, we note that $\bar{\nu}$ is constant. Then we use \eqref{B-1} to find 
\[  \theta(t, y) - \theta(t, x) =  \Big( \tilde{\theta} (t,y) + {\bar \nu}  t \Big) - \Big( \tilde{\theta} (t,x) + {\bar \nu}  t \Big) =  \tilde{\theta} (t,y) - \tilde{\theta} (t,x).        \]
This implies 
\begin{align*}
\begin{aligned}
\partial_t \tilde{\theta} (t,x) &= \partial_t \theta(t,x) - {\bar \nu} =  \nu(x) - \bar{\nu} + \kappa \displaystyle\int_{\Omega} \psi(x,y) h( \theta(t, y) - \theta(t, x)) \di y  \\
& = {\tilde \nu}(x)  +  \kappa \displaystyle\int_{\Omega} \psi(x,y) h( {\tilde \theta}(t, y) - {\tilde \theta}(t, x)) \di y.
\end{aligned}
\end{align*}
(ii)~Note that 
\begin{align}
\begin{aligned} \label{B-2}
\frac{\di}{\di t} \bar \theta (t) &= \frac{1}{|\Omega|} \int_\Omega \partial_t \theta(t,x)  \di x \\
& = \frac{1}{|\Omega|}\int_\Omega \nu (x)  \di x + \underbrace{\frac{\kappa}{|\Omega|} \iint_{\Omega^2} \psi(x, y) \frac{\sin \big( \theta(t, y) - \theta(t, x) \big)}{|\theta(t, y) - \theta(t, x)|_o^\alpha} \di y \di x}_{=:{\mathcal I}_1}.
\end{aligned}
\end{align}
Since $\psi(x,y)$ and $|\theta(t,x) - \theta(t,y)|_o^\alpha$ are symmetric and the $\sin$-function is odd, the term ${\mathcal I}_{1}$ vanishes:
\begin{align}
\begin{aligned} \label{B-3}
{\mathcal I}_{1} &= \frac{\kappa}{|\Omega|} \iint_{\Omega^2}  \psi(x, y) \frac{\sin \big( \theta(t, y) - \theta(t, x) \big)}{|\theta(t, y) - \theta(t, x)|_o^\alpha} \di y \di x \\
&= -\frac{\kappa}{|\Omega|} \iint_{\Omega^2} \psi(x, y) \frac{\sin \big( \theta(t, y) - \theta(t, x) \big)}{|\theta(t, y) - \theta(t, x)|_o^\alpha} \di y \di x = -{\mathcal I}_{1}.
\end{aligned}
\end{align} 
Finally, we combine \eqref{B-2} and \eqref{B-3} to find the desired second estimate.
\end{proof}

\subsection{Singular coupling function}\label{sec:2.2}
In this subsection, we study several properties of the singular interaction kernel $h$ in \eqref{A-3} and its regularized version 
\begin{align} \label{B-3-1}
h_\varepsilon (\theta) = \frac{\sin(\theta)}{|\theta|_o^\alpha+ \varepsilon}, \quad \theta \in \bbr, \ \varepsilon >0.
\end{align}

\begin{lemma} \label{L2.2}
For $q\in [0,1]$ and $u,v\in [0, \infty)$, we have
\begin{align*}
u^q + v^q \geq (u+v)^q.
\end{align*}
\end{lemma}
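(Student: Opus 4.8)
The plan is to reduce the claim to the elementary fact that the map $t \mapsto t^q$ lies on or above the identity on $[0,1]$ whenever $q \in [0,1]$. First I would dispose of the degenerate cases: if $u = v = 0$ both sides vanish; if $q = 0$ the inequality reads $2 \geq 1$; and $q = 1$ gives equality. So from now on assume $q \in (0,1)$ and $u + v > 0$.

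Dividing the desired inequality by $(u+v)^q > 0$, set $s := u/(u+v)$ and $t := v/(u+v)$, so that $s, t \in [0,1]$ with $s + t = 1$. The claim becomes $s^q + t^q \geq 1$. Now $s^q \geq s$: this is trivial at $s = 0$, and for $s \in (0,1]$ it follows from $s^q - s = s\,(s^{q-1} - 1) \geq 0$, since $q - 1 \leq 0$ forces $s^{q-1} \geq 1$. Likewise $t^q \geq t$. Adding the two, $s^q + t^q \geq s + t = 1$, which is exactly what was wanted.

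As an alternative route, fix $v \geq 0$ and consider $f(u) := u^q + v^q - (u+v)^q$ on $[0,\infty)$. Then $f(0) = v^q \geq 0$, and for $u > 0$ one has $f'(u) = q\bigl(u^{q-1} - (u+v)^{q-1}\bigr) \geq 0$, because $x \mapsto x^{q-1}$ is nonincreasing on $(0,\infty)$ and $u \leq u+v$; hence $f$ is nondecreasing and $f(u) \geq f(0) \geq 0$. I do not expect any genuine obstacle here — the inequality is classical (subadditivity of $t\mapsto t^q$ for $q\in[0,1]$) — the only points needing a line of care are the boundary values $u=0$, $v=0$, and $q \in \{0,1\}$, all of which are immediate.
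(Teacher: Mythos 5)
Your main argument is correct and is, in substance, the same reduction the paper makes, though you justify the key step more elementarily. After dividing by $(u+v)^q$ you need $s^q \geq s$ for $s \in [0,1]$, $q\in[0,1]$, which you get directly from $s^{q-1}\geq 1$. The paper obtains exactly the same inequality by invoking concavity of $t\mapsto t^q$ and applying the two-point Jensen inequality with one node at $0$: it writes $u^q=\bigl(\frac{u}{u+v}(u+v)+\frac{v}{u+v}\cdot 0\bigr)^q \geq \frac{u}{u+v}(u+v)^q$, which is precisely your $s^q\geq s$ phrased as concavity, and then adds the analogous bound for $v^q$. So the two primary proofs coincide up to whether one names concavity or verifies the special case by hand; yours is marginally more self-contained. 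Your secondary route — fixing $v$ and showing $f(u)=u^q+v^q-(u+v)^q$ is nondecreasing because $f'(u)=q\bigl(u^{q-1}-(u+v)^{q-1}\bigr)\geq 0$ — is a genuinely different, calculus-based argument not used in the paper, and it is also correct (one only needs to note $f$ is continuous at $u=0$ so the derivative bound on $(0,\infty)$ suffices).
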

\begin{proof}
See Appendix \ref{App-0}.
\end{proof}
\begin{lemma}\label{L2.3}
For $\alpha \in [0, 1),$ the coupling function $h$ is $(1-\alpha)$-H\"older continuous on $\mathbb{R}$:
\[\sup\limits_{\theta_1 \neq \theta_2\in \mathbb{R}}\frac{|h(\theta_1) - h(\theta_2)|} {|\theta_1-\theta_2|^{1-\alpha}}<\infty.\] 
Moreover, the function $h$ is bounded on $\bbr$:
\[ \sup_{\theta\in\bbr} |h(\theta)| \leq 1.\]
\end{lemma}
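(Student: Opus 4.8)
The plan is to prove the two assertions in turn, starting with the uniform bound. Write $\theta\equiv\bar\theta\bmod 2\pi$ with $\bar\theta\in(-\pi,\pi]$, so that $|\sin\theta|=|\sin\bar\theta|=\sin|\bar\theta|=\sin|\theta|_o$ with $|\theta|_o\in[0,\pi]$. Splitting on whether $|\theta|_o\le 1$ or $|\theta|_o\ge 1$, we get $\sin|\theta|_o\le|\theta|_o\le|\theta|_o^{\alpha}$ in the first case (since $t\le t^{\alpha}$ for $t\in[0,1]$ and $\alpha\le1$) and $\sin|\theta|_o\le1\le|\theta|_o^{\alpha}$ in the second; hence $|\sin\theta|\le|\theta|_o^{\alpha}$ and $|h(\theta)|\le1$ for $\theta\neq0$, while $h(0)=0$ by definition.

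For the Hölder estimate, since $h$ is bounded it suffices to find $C$ with $|h(\theta_1)-h(\theta_2)|\le C|\theta_1-\theta_2|^{1-\alpha}$ whenever $|\theta_1-\theta_2|\le1$, because for $|\theta_1-\theta_2|>1$ we have $|h(\theta_1)-h(\theta_2)|\le2\le2|\theta_1-\theta_2|^{1-\alpha}$ automatically. Differentiating on $(0,\pi)$ gives $h'(\theta)=\theta^{-\alpha}\cos\theta-\alpha\,\theta^{-\alpha-1}\sin\theta$, so $|h'(\theta)|\le(1+\alpha)\theta^{-\alpha}\le2\theta^{-\alpha}$ there (using $|\sin\theta|\le\theta$), and by oddness of $\sin$, evenness of $|\cdot|_o$, and $2\pi$-periodicity the bound $|h'(\theta)|\le2|\theta|_o^{-\alpha}$ holds a.e.\ on $\bbr$, with $h$ continuous everywhere and locally absolutely continuous off $2\pi\bbz$. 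Now fix $\theta_1,\theta_2$ with $|\theta_1-\theta_2|\le1$. If $|\theta_1|_o>2$, then (since $|\cdot|_o$ is $1$-Lipschitz) the segment $[\theta_1,\theta_2]$ lies in $\{|s|_o\ge1\}$, on which $h$ is Lipschitz with constant $2$, so $|h(\theta_1)-h(\theta_2)|\le2|\theta_1-\theta_2|\le2|\theta_1-\theta_2|^{1-\alpha}$. Otherwise $|\theta_1|_o\le2$, and subtracting a common multiple of $2\pi$ (which changes neither side by periodicity) we may assume $\theta_1,\theta_2\in(-\pi,\pi)$, because $2+1<\pi$; on this interval $h(\theta)=\sin\theta\,|\theta|^{-\alpha}$ (with $h(0)=0$) is odd, nonnegative on $[0,\pi)$, and absolutely continuous with $|h'(\theta)|\le2|\theta|^{-\alpha}$.

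It remains to bound $|h(\theta_1)-h(\theta_2)|$ for $\theta_1,\theta_2\in(-\pi,\pi)$. If $\theta_1$ and $\theta_2$ have the same sign we may assume $0\le\theta_1<\theta_2<\pi$ (the other case follows by oddness), and then
\[
|h(\theta_1)-h(\theta_2)|\le\int_{\theta_1}^{\theta_2}2s^{-\alpha}\di s=\frac{2}{1-\alpha}\big(\theta_2^{1-\alpha}-\theta_1^{1-\alpha}\big)\le\frac{2}{1-\alpha}(\theta_2-\theta_1)^{1-\alpha},
\]
where the last step is Lemma \ref{L2.2} with $q=1-\alpha$ applied to $u=\theta_1$, $v=\theta_2-\theta_1$. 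If instead $\theta_1\le0\le\theta_2$, relabel so that $|\theta_1|\le\theta_2$; by oddness and $h(0)=0$ we have $|h(\theta_1)-h(\theta_2)|=h(|\theta_1|)+h(\theta_2)$, and estimating each term against $h(0)=0$ via the previous inequality gives
\[
|h(\theta_1)-h(\theta_2)|\le\frac{2}{1-\alpha}\big(|\theta_1|^{1-\alpha}+\theta_2^{1-\alpha}\big)\le\frac{4}{1-\alpha}\,\theta_2^{1-\alpha}\le\frac{4}{1-\alpha}(\theta_2+|\theta_1|)^{1-\alpha}=\frac{4}{1-\alpha}|\theta_1-\theta_2|^{1-\alpha}.
\]
Collecting the cases, $h$ is $(1-\alpha)$-Hölder with seminorm at most $4/(1-\alpha)$.

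The only genuinely delicate point is the sign-changing sub-case: a direct use of Lemma \ref{L2.2} would bound $\theta_2^{1-\alpha}+|\theta_1|^{1-\alpha}$ \emph{from below} by $(\theta_2+|\theta_1|)^{1-\alpha}$, which is the wrong direction, so one must instead exploit oddness and $h(0)=0$ to reduce to two one-sided estimates anchored at the origin and then discard the smaller of $|\theta_1|,\theta_2$ at the cost of a harmless factor $2$. Everything else — the reduction to $|\theta_1-\theta_2|\le1$, the derivative bound, and the periodicity bookkeeping (for which the threshold $2$ is chosen so that $2+1<\pi$, forcing a point with $|\theta_1|_o\le2$ to drag its partner into $(-\pi,\pi)$) — is routine.
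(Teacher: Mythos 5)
Your proof is correct, but it takes a genuinely different route from the paper's. The paper factors $h(\theta)=\phi(\theta)\,|\theta|^{1-\alpha}$ with $\phi(\theta)=\sin\theta/|\theta|$ (continuously extended by $\phi(0)=1$), observes that $\phi$ is Lipschitz, and combines this with the subadditivity estimate $\bigl||\theta_1|^{q}-|\theta_2|^{q}\bigr|\le|\theta_1-\theta_2|^{q}$ via a product-rule-style triangle inequality; the computation is carried out on the fundamental window $[-\tfrac12,\tfrac12]$ where $|\theta|_o=|\theta|$, and the extension to all of $\mathbb{R}$ is left to the standard periodicity-plus-boundedness argument. You instead prove the pointwise derivative bound $|h'(\theta)|\le 2|\theta|_o^{-\alpha}$, integrate it, and carry out the extension explicitly: the reduction to $|\theta_1-\theta_2|\le1$ by boundedness, the split between the near-singular and away-from-singular regimes (with the threshold $2$ chosen so that $2+1<\pi$ forces both points into a single fundamental window), and the further sign-split where you correctly note that Lemma~\ref{L2.2} points the wrong way and must be replaced by the anchor-at-zero trick using oddness and $h(0)=0$. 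The paper's decomposition is shorter and sidesteps the sign bookkeeping entirely; your argument is longer but fills in the global structure (periodicity, long-range, and sign-change cases) that the paper leaves implicit, and the integrated-derivative technique is arguably more robust. Your explicit seminorm bound $4/(1-\alpha)$ is a nice bonus, though, as expected, it degenerates as $\alpha\to1$. The boundedness argument in your first paragraph is essentially the same $|\theta|_o\lessgtr1$ split as the paper's, just phrased via $|\sin\theta|=\sin|\theta|_o$.
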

\begin{proof} 
\noindent (i)~The function $h$ is clearly $2\pi$-periodic and smooth away from $\theta = 2\pi k, \ k\in \mathbb{Z}$. Therefore we prove that $h$ is $(1-\alpha)$-H\"older continuous on $\left[ -\frac{1}{2}, \frac{1}{2} \right]$. On this set it holds $|\theta|_o = |\theta|$. We split the proof into three steps. \newline

\noindent $\bullet$~Step A (Preparatory step):~We first rewrite $h$ in \eqref{A-3} as 
\begin{align*}
h(\theta) = \frac{\sin (\theta)}{|\theta|} |\theta|^{1-\alpha}, \quad \theta \neq 0.
\end{align*}
We claim that for all $\theta_1,\theta_2 \in \mathbb{R}$ and $q\in (0,1]$, we have
\begin{align} \label{B-4}
\Big ||\theta_1|^q - |\theta_2|^q \Big | \leq |\theta_1-\theta_2|^q.
\end{align}
{\it Proof of \eqref{B-4}}:~Let $u \geq v \geq 0$. Then, by Lemma \ref{L2.2}, we have
\begin{equation} \label{B-5}
u^q = (u-v +v)^q \leq (u-v)^q + v^q, \quad \mbox{i.e.,} \quad (u-v)^q \geq u^q - v^q.
\end{equation}
Now, without loss of generality, we may assume $|\theta_1|\geq |\theta_2|$, and we set 
\[ u:=|\theta_1| \quad \mbox{and} \quad  v:=|\theta_2|. \]
Then, we substitute $u$ and $v$ into \eqref{B-5} to find 
\begin{align*}
|\theta_1-\theta_2|^q \geq \Big ||\theta_1|-|\theta_2| \Big |^q \geq |\theta_1|^q - |\theta_2|^q = \Big||\theta_1|^q - |\theta_2|^q \Big |.
\end{align*}
\noindent $\bullet$~Step B:~It is clear that the function $\phi(\theta) = \frac{\sin (\theta)}{|\theta|}$ which is continuously extended by $\phi(0) = 1$ is Lipschitz continuous on $\mathbb{R}$ with Lipschitz constant $L_{\phi}>0$.
\vspace{0.2cm}

\noindent $\bullet$~Step C:~Finally, we combine the results of Step A and Step B to derive
\begin{align}
\begin{aligned} \label{B-7}
|h(\theta_1) - h(\theta_2)| = &\left| \frac{\sin (\theta_1)}{|\theta_1|} |\theta_1|^{1-\alpha} - \frac{\sin (\theta_2)}{|\theta_2|} |\theta_2|^{1-\alpha} \right| \\
=& \left| \frac{\sin (\theta_1)}{|\theta_1|} |\theta_1|^{1-\alpha} - \frac{\sin (\theta_1)}{|\theta_1|} |\theta_2|^{1-\alpha} + \frac{\sin (\theta_1)}{|\theta_1|} |\theta_2|^{1-\alpha} - \frac{\sin (\theta_2)}{|\theta_2|}|\theta_2|^{1-\alpha} \right| \\
\leq & \left| \frac{\sin (\theta_1)}{|\theta_1|} \right| | |\theta_1|^{1-\alpha} - |\theta_2|^{1-\alpha}| + |\theta_2|^{1-\alpha} \left|  \frac{\sin (\theta_1)}{|\theta_1|} - \frac{\sin (\theta_2)}{|\theta_2|} \right| \\
\leq & |\theta_1-\theta_2|^{1-\alpha} + L_{\phi}|\theta_2|^{1-\alpha} |\theta_1-\theta_2|.
\end{aligned}
\end{align}
Here, we used $\left|\frac{\sin (\theta_1)}{|\theta_1|}\right|\le1$ and \eqref{B-4} in the last inequality. Since $|\theta_1-\theta_2|\leq 1$, we use $|\theta_1-\theta_2|\le|\theta_1-\theta_2|^{1-\alpha}$ and \eqref{B-7} to see
\begin{align*}
|h(\theta_1) - h(\theta_2)|&\leq |\theta_1-\theta_2|^{1-\alpha} + L_{\phi}|\theta_2|^{1-\alpha} |\theta_1-\theta_2| \leq (1+L_{\phi}|\theta_2|^{1-\alpha}) |\theta_1-\theta_2|^{1-\alpha} \\
&\leq \left(1+ 2^{\alpha -1}L_{\phi}\right) |\theta_1-\theta_2|^{1-\alpha} \leq C(L_{\phi},\alpha)|\theta_1-\theta_2|^{1-\alpha}.
\end{align*}
This gives us the desired estimate. \\

\noindent (ii)~Let $\theta \in \bbr$ and $\bar \theta \in (-\pi, \pi]$ with $\bar \theta = \theta + 2\pi k, \ k \in \mathbb{Z}$. Then for $|\bar \theta|\leq 1$ it holds
\begin{align*}
|h(\theta)| = \frac{|\sin (\bar \theta)|}{|\bar \theta|^\alpha} \leq |\bar \theta|^{1-\alpha} \leq 1
\end{align*}
and for $|\bar \theta|>1$ it holds $|\bar \theta|^\alpha >1$ and therefore
\begin{align*}
|h(\theta)| = \frac{|\sin (\bar \theta)|}{|\bar \theta|^\alpha} \leq \frac{1}{|\bar \theta|^\alpha} <1.
\end{align*}
Therefore
\begin{align*}
|h(\theta)| \leq 1 \ \mathrm{for} \ \theta \in \bbr. 
\end{align*}
\end{proof}

\begin{remark}
It is obvious that for every $\varepsilon>0$ it holds
\begin{align*}
\sup_{\theta\in \bbr}|h_\varepsilon(\theta)| \leq \sup_{\theta\in \bbr}|h(\theta)| \leq 1.
\end{align*}
\end{remark}

\begin{lemma} \label{L2.4-1}
The function $h_\varepsilon$ is H\"older continuous with H\"older constant $C_h$ which does not depend on $\varepsilon>0$. 
\end{lemma}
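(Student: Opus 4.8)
The plan is to mirror the structure of the proof of Lemma \ref{L2.3}, isolating the one genuinely new point — the uniformity of the Hölder constant in $\varepsilon$. Since $h_\varepsilon$ is $2\pi$-periodic and continuous, and $|h_\varepsilon|\le 1$ on $\bbr$ (Remark after Lemma \ref{L2.3}), it suffices to establish the Hölder bound on a fundamental domain on which $|\theta|_o=|\theta|$, say for $\theta_1,\theta_2\in\left[-\tfrac12,\tfrac12\right]$ with $|\theta_1-\theta_2|\le 1$; the reduction to this domain, and the disposal of the case $|\theta_1-\theta_2|\ge 1$ via $|h_\varepsilon|\le 1$, is exactly as in Lemma \ref{L2.3}.

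On this domain I would factor
\[
h_\varepsilon(\theta) = \frac{\sin\theta}{|\theta|}\cdot\frac{|\theta|}{|\theta|^\alpha+\varepsilon} =: \phi(\theta)\,g_\varepsilon(\theta), \qquad \phi(0):=1,\quad g_\varepsilon(0):=0.
\]
By Step B of Lemma \ref{L2.3}, $\phi$ is Lipschitz with constant $L_\phi$ and $|\phi|\le 1$. Writing $g_\varepsilon(\theta)=G_\varepsilon(|\theta|)$ with $G_\varepsilon(s)=s/(s^\alpha+\varepsilon)$ for $s\ge 0$, one first notes $G_\varepsilon(s)\le s^{1-\alpha}\le 1$ on the domain, so $g_\varepsilon$ is uniformly bounded. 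The crux is the derivative estimate
\[
0\le G_\varepsilon'(s) = \frac{(1-\alpha)s^\alpha+\varepsilon}{(s^\alpha+\varepsilon)^2} \le \frac{s^\alpha+\varepsilon}{(s^\alpha+\varepsilon)^2} = \frac{1}{s^\alpha+\varepsilon} \le \frac{1}{s^\alpha}, \qquad s>0,
\]
whose right-hand side is $\varepsilon$-free and integrable near $0$. Hence, for $0\le s_1\le s_2$,
\[
0\le G_\varepsilon(s_2)-G_\varepsilon(s_1) \le \int_{s_1}^{s_2} s^{-\alpha}\,\di s = \frac{s_2^{1-\alpha}-s_1^{1-\alpha}}{1-\alpha} \le \frac{(s_2-s_1)^{1-\alpha}}{1-\alpha},
\]
the last step being \eqref{B-4} with $q=1-\alpha$ (equivalently Lemma \ref{L2.2}). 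Combining with $\Big||\theta_1|-|\theta_2|\Big|\le|\theta_1-\theta_2|$ shows $g_\varepsilon$ is $(1-\alpha)$-Hölder with constant $1/(1-\alpha)$, uniformly in $\varepsilon$.

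I would then conclude by the product rule for bounded Hölder functions, as in Step C of Lemma \ref{L2.3}:
\[
|h_\varepsilon(\theta_1)-h_\varepsilon(\theta_2)| \le |\phi(\theta_1)|\,|g_\varepsilon(\theta_1)-g_\varepsilon(\theta_2)| + |g_\varepsilon(\theta_2)|\,|\phi(\theta_1)-\phi(\theta_2)| \le \frac{1}{1-\alpha}|\theta_1-\theta_2|^{1-\alpha} + L_\phi|\theta_1-\theta_2|,
\]
and then $|\theta_1-\theta_2|\le|\theta_1-\theta_2|^{1-\alpha}$ (valid since $|\theta_1-\theta_2|\le 1$) gives the claim with $C_h:=1/(1-\alpha)+L_\phi$, independent of $\varepsilon$. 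The only step I expect to need real care is the $\varepsilon$-uniform control of $g_\varepsilon$: although $G_\varepsilon'(0)=1/\varepsilon$ blows up, the integrable majorant $s^{-\alpha}$ does not depend on $\varepsilon$, so the difference quotient of $G_\varepsilon$ stays bounded; everything else is a routine adaptation of Lemma \ref{L2.3}.
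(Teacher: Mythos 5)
Your proof is correct, but it takes a genuinely different route from the paper's. The paper establishes Lemma~\ref{L2.4-1} by a two-case argument on $\max\{|\theta_1|,|\theta_2|\}$ versus $2|\theta_1-\theta_2|$: when the points are small relative to their separation, it uses the pointwise bound $|h_\varepsilon(\theta)|\le|\theta|^{1-\alpha}$; when they are large, it applies the mean value theorem together with the derivative bound $|h_\varepsilon'(\xi)|\le 2|\xi|^{-\alpha}$ and the observation that $|\xi|>|\theta_1-\theta_2|$ in that regime. You instead factor $h_\varepsilon=\phi\, g_\varepsilon$ exactly as in Step~B/C of Lemma~\ref{L2.3} and prove that $g_\varepsilon(\theta)=G_\varepsilon(|\theta|)$ is uniformly $(1-\alpha)$-Hölder by bounding $G_\varepsilon'(s)=\bigl((1-\alpha)s^\alpha+\varepsilon\bigr)/(s^\alpha+\varepsilon)^2\le s^{-\alpha}$ and integrating the $\varepsilon$-free dominant, then invoking \eqref{B-4} (i.e.\ Lemma~\ref{L2.2}). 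Both arguments are valid; yours has the advantage of reusing the decomposition and the lemmas already set up for Lemma~\ref{L2.3}, and it makes the source of $\varepsilon$-uniformity transparent (the pointwise blow-up $G_\varepsilon'(0)=1/\varepsilon$ is harmless because the integral of $s^{-\alpha}$ is what controls the increment), whereas the paper's case split avoids any integration but requires the small auxiliary observation that in Case~B the two points necessarily share a sign so that $|\xi|>|\theta_1|$. The resulting Hölder constants differ ($1/(1-\alpha)+L_\phi$ for you, roughly $2^{2-\alpha}$ for the paper), but the lemma only asserts $\varepsilon$-independence, so this is immaterial.
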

\begin{proof} As in the proof of Lemma \ref{L2.3}, we only consider the set $\left[-\frac{1}{2}, \frac{1}{2} \right]$. Let $\varepsilon>0$ and assume $\theta_1,\theta_2 \in \left[-\frac{1}{2}, \frac{1}{2} \right]$. We consider two cases. \newline \\
\noindent $\bullet$~Case A ($\max\lbrace |\theta_1|,|\theta_2| \rbrace \leq 2|\theta_1-\theta_2|$): 
\begin{align*}
\left| h_\varepsilon(\theta_1)-h_\varepsilon(\theta_2) \right| \leq \left| h_\varepsilon(\theta_1) \right|+ \left| h_\varepsilon(\theta_2) \right| \leq |\theta_1|^{1-\alpha}+ |\theta_2|^{1-\alpha} \leq 2^{2-\alpha} |\theta_1-\theta_2|^{1-\alpha}.
\end{align*}
\noindent $\bullet$~Case B ($\max\lbrace |\theta_1|,|\theta_2| \rbrace > 2|\theta_1-\theta_2|$): Assume $|\theta_1|\leq |\theta_2|$. Then it holds
\begin{align*}
|\theta_2|-|\theta_1| \leq |\theta_1-\theta_2| \quad \Rightarrow \quad |\theta_1| \geq |\theta_2| - |\theta_1-\theta_2| >\frac{|\theta_2|}{2}.
\end{align*} 
There exists a $\xi$ between $\theta_1$ and $\theta_2$ such that
\begin{align} \label{B-8-1}
\left| h_\varepsilon(\theta_1)-h_\varepsilon(\theta_2) \right| = |h_\varepsilon'(\xi)| |\theta_1-\theta_2|.
\end{align}
It holds
\begin{align*}
|h_\varepsilon'(\xi) |
= \left| \frac{\cos(\xi)}{|\xi|^\alpha+\varepsilon} - \frac{\alpha \mathrm{sgn}(\xi)\sin(\xi) |\xi|^{\alpha-1}}{(|\xi|^{\alpha}+\varepsilon)^2} \right| \leq (1+ \alpha) |\xi|^{-\alpha} \leq 2 |\xi|^{-\alpha}.
\end{align*}
Since $|\xi| > |\theta_1| > \frac{|\theta_2|}{2} > |\theta_1-\theta_2|$, it holds $|\xi|^{-\alpha} < |\theta_1-\theta_2|^{-\alpha}$. Combining this with \eqref{B-8-1} yields
\begin{align*}
\left| h_\varepsilon(\theta_1)-h_\varepsilon(\theta_2) \right| = |h_\varepsilon'(\xi)| |\theta_1-\theta_2| \leq 2 |\theta_1-\theta_2|^{1-\alpha}.
\end{align*}
Therefore $h_\varepsilon$ is $(1-\alpha)$-H\"older continuous on $\bbr$ with some H\"older constant $C_h$.
\end{proof}

We now impose a one-sided Lipschitz condition on $-h$ on $[-2\pi,2\pi]$, formulated as follows. \\
Let $\alpha\in[0,1)$ and let $h:[-2\pi,2\pi]\to\mathbb{R}$ be given.  Then, we choose $\bar h$ and $\tilde\theta\in(0,\tfrac{\pi}{2})$ such that
	\[
	\bar h \;=\; \max_{0<\theta<\pi} h(\theta),
	\qquad
	2\alpha \sin \tilde\theta \;=\; \tilde\theta \cos \tilde\theta.
	\]
	Define $\Delta,\Lambda:[-2\pi,2\pi]\to\mathbb{R}$ by
	\[
	\Delta(\theta):=
	\begin{cases}
		2\bar h- h(\theta), & \theta \in [-2\pi,\,-2\pi+\tilde\theta),\\[2pt]
		\bar h, & \theta \in [-2\pi+\tilde\theta,\,-\tilde\theta),\\[2pt]
		-\,h(\theta), & \theta \in [-\tilde\theta,\,\tilde\theta],\\[2pt]
		-\,\bar h, & \theta \in (\tilde\theta,\,2\pi-\tilde\theta],\\[2pt]
		-\,h(\theta)-2\bar h, & \theta \in (2\pi-\tilde\theta,\,2\pi],
	\end{cases}
	\qquad
	\Lambda(\theta):=
	\begin{cases}
		-\,2\bar h, & \theta \in [-2\pi,\,-2\pi+\tilde\theta),\\[2pt]
		-\,\bar h- h(\theta), & \theta \in [-2\pi+\tilde\theta,\,-\tilde\theta),\\[2pt]
		0, & \theta \in [-\tilde\theta,\,\tilde\theta],\\[2pt]
		\bar h- h(\theta), & \theta \in (\tilde\theta,\,2\pi-\tilde\theta],\\[2pt]
		2\bar h, & \theta \in (2\pi-\tilde\theta,\,2\pi].
	\end{cases}
	\]
\begin{lemma} \label{P4.1}
The following assertions hold:
	\begin{enumerate}
		\item[(i)] $\Delta$ is monotonically decreasing, and $\Lambda$ is Lipschitz continuous such that 
		\[
		-\,h(\theta)\;=\;\Delta(\theta)+\Lambda(\theta), \quad \theta\in[-2\pi,2\pi].
		\]
		\item[(ii)] The function $-h$ is one–sided Lipschitz continuous on $[-2\pi,2\pi]$: there exists $L_h>0$ such that for all $\theta_1,\theta_2\in[-2\pi,2\pi]$,
		\[
		\big((-h)(\theta_1)-(-h)(\theta_2)\big)\,(\theta_1-\theta_2)\ \le\ L_h\,(\theta_1-\theta_2)^2.
		\]
	\end{enumerate}
\end{lemma}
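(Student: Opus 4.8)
The plan is to prove part (i) by a direct case analysis over the five intervals on which $\Delta$ and $\Lambda$ are defined, and then to deduce part (ii) at once from the decomposition. I would first record a few elementary facts about $h$ on $[-2\pi,2\pi]$. On $(0,\pi)$ we have $|\theta|_o=\theta$, so $h(\theta)=\sin\theta\,\theta^{-\alpha}$ and $h'(\theta)=\theta^{-\alpha-1}(\theta\cos\theta-\alpha\sin\theta)$; since $\theta\mapsto\theta\cot\theta$ is strictly decreasing on $(0,\pi)$, $h$ has exactly one critical point there, namely the breakpoint $\tilde\theta$, and $h$ is strictly increasing on $[0,\tilde\theta]$ with $\bar h=\max_{(0,\pi)}h=h(\tilde\theta)$. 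Moreover $h$ is $2\pi$-periodic and odd, hence also increasing on $[-\tilde\theta,\tilde\theta]$, and $h(-2\pi)=h(0)=h(2\pi)=0$. Finally, on any interval on which $|\theta|_o\ge\tilde\theta$ — in particular on $[-2\pi+\tilde\theta,-\tilde\theta]$ and on $[\tilde\theta,2\pi-\tilde\theta]$ — the denominator $|\theta|_o^{\alpha}$ stays bounded away from $0$ and is Lipschitz, $\sin$ is Lipschitz, so $h$ is Lipschitz there with a constant depending only on $\alpha,\tilde\theta$; thus the merely Hölder (non-Lipschitz) behaviour of $h$ is confined to neighbourhoods of $\theta\in\{-2\pi,0,2\pi\}$, where $h$ is monotone.

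For part (i): the identity $-h=\Delta+\Lambda$ follows by adding the two piecewise formulas on each of the five intervals, where the $\bar h$-constants cancel and $-h$ survives. For the Lipschitz continuity of $\Lambda$: on the two outermost intervals $\Lambda$ is the constant $\mp2\bar h$, on the central interval it is $0$, and on $[-2\pi+\tilde\theta,-\tilde\theta)$ and $(\tilde\theta,2\pi-\tilde\theta]$ it equals $\pm\bar h-h$, which is Lipschitz by the last fact above; it then remains to check continuity at the four breakpoints $\pm\tilde\theta$ and $\pm(2\pi-\tilde\theta)$, and by $2\pi$-periodicity and oddness each of these four conditions reduces to the single identity $\bar h=h(\tilde\theta)$, which holds. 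For the monotonicity of $\Delta$: $\Delta$ equals the constants $\bar h$ and $-\bar h$ on the two inner bands; on $[-\tilde\theta,\tilde\theta]$, $\Delta=-h$ is strictly decreasing because $h$ is increasing there; on $[-2\pi,-2\pi+\tilde\theta)$ and $(2\pi-\tilde\theta,2\pi]$, writing $h(\theta)=h(\theta\pm2\pi)$ and using that $h$ is increasing on $[0,\tilde\theta]$ and on $[-\tilde\theta,0]$ shows that $\Delta=\pm2\bar h-h$ is decreasing as well; and the same identity $\bar h=h(\tilde\theta)$ makes $\Delta$ continuous at the four breakpoints, so $\Delta$ is monotonically decreasing on all of $[-2\pi,2\pi]$, running from $2\bar h$ at $-2\pi$ down to $-2\bar h$ at $2\pi$.

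For part (ii), fix $\theta_1,\theta_2\in[-2\pi,2\pi]$ and split
\[
\big((-h)(\theta_1)-(-h)(\theta_2)\big)(\theta_1-\theta_2)=\big(\Delta(\theta_1)-\Delta(\theta_2)\big)(\theta_1-\theta_2)+\big(\Lambda(\theta_1)-\Lambda(\theta_2)\big)(\theta_1-\theta_2).
\]
The first summand is $\le0$ since $\Delta$ is monotonically decreasing, and the second is bounded by $L_\Lambda(\theta_1-\theta_2)^2$ by the Lipschitz estimate for $\Lambda$; hence (ii) holds with $L_h:=L_\Lambda$. I expect the main obstacle to be the junction bookkeeping in part (i): one has to verify that the five piecewise formulas paste together so that $\Delta$ stays monotone \emph{and} $\Lambda$ stays continuous simultaneously, which works precisely because $\tilde\theta$ is the maximizer of $h$ on $(0,\pi)$ — so that $\bar h=h(\tilde\theta)$ — since a different choice of breakpoint would force a jump destroying either the monotonicity of $\Delta$ or the continuity (hence the Lipschitz property) of $\Lambda$. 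A secondary, routine point is to confirm that $h$ is genuinely Lipschitz (not merely Hölder) on the intervals where $|\theta|_o$ stays away from $0$, which is exactly what keeps the singular behaviour entirely inside $\Delta$.
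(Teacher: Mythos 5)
Your strategy is sound and, unlike the paper (which simply defers to Lemma~2.17 of the cited reference), you actually carry out the pasting argument; the deduction of (ii) from the decomposition in (i) is clean and correct. There is, however, one unaddressed inconsistency you should flag. You compute $h'(\theta)=\theta^{-\alpha-1}(\theta\cos\theta-\alpha\sin\theta)$ on $(0,\pi)$, so the unique interior critical point of $h$ is the root of $\alpha\sin\theta=\theta\cos\theta$, i.e.\ of $\theta\cot\theta=\alpha$. You then assert that this critical point \emph{is} $\tilde\theta$ and hence $\bar h=h(\tilde\theta)$. But the paper defines $\tilde\theta$ by $2\alpha\sin\tilde\theta=\tilde\theta\cos\tilde\theta$, i.e.\ $\tilde\theta\cot\tilde\theta=2\alpha$, which is a \emph{different} equation; since $\theta\cot\theta$ is strictly decreasing from $1$ at $0^+$, this $\tilde\theta$ lies strictly to the left of the true maximizer whenever $\alpha>0$, so $h(\tilde\theta)<\bar h$. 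As you yourself point out, the whole construction hinges on $\bar h=h(\tilde\theta)$: without it, $\Lambda$ jumps by $\bar h-h(\tilde\theta)>0$ at $\tilde\theta$ (and by symmetric amounts at the other three breakpoints), destroying the Lipschitz claim, while $\Delta$ would no longer be continuous. Moreover, the paper's equation $\theta\cot\theta=2\alpha$ has no solution in $(0,\pi/2)$ at all once $\alpha\ge 1/2$, since $\theta\cot\theta<1$ there. The factor $2$ is almost certainly a transcription slip from the cited source, where the kernel carries exponent $2\alpha$ rather than $\alpha$; the defining relation consistent with the present $h$ is $\alpha\sin\tilde\theta=\tilde\theta\cos\tilde\theta$, under which $\tilde\theta$ is precisely the maximizer, $\bar h=h(\tilde\theta)$, and your argument closes without further changes. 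You should state this correction explicitly rather than silently identifying $\tilde\theta$ with the critical point.
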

\begin{proof}
We refer to Lemma 2.17 in \cite{Poyato2019} for a detailed proof.
\end{proof}

\begin{lemma} \label{L2.6-1}
Let $\varepsilon>0$. The function $-h_\varepsilon$ is one-sided Lipschitz continuous on $[-2\pi,2\pi]$ with some Lipschitz constant $L$ independent of $\varepsilon$.
\end{lemma}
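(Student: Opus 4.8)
The plan is to reduce the one–sided Lipschitz estimate to an \emph{upper} (signed) bound on the derivative of $-h_\varepsilon$ that is uniform in $\varepsilon$, and then integrate. Fix $\varepsilon>0$. Since $|\theta|_o^\alpha+\varepsilon\ge\varepsilon>0$, a direct inspection of \eqref{B-3-1} shows that $h_\varepsilon$ is continuous on $[-2\pi,2\pi]$, is $C^1$ off the finite set $\{-2\pi,-\pi,0,\pi,2\pi\}$, and has derivative bounded there by an ($\varepsilon$–dependent) constant; hence $h_\varepsilon$ is Lipschitz, and in particular absolutely continuous, on $[-2\pi,2\pi]$, so that
\[
(-h_\varepsilon)(\theta_1)-(-h_\varepsilon)(\theta_2)=\int_{\theta_2}^{\theta_1}(-h_\varepsilon)'(s)\,\di s,\qquad \theta_1,\theta_2\in[-2\pi,2\pi].
\]
Consequently it suffices to produce $L=L(\alpha)>0$, independent of $\varepsilon$, with $(-h_\varepsilon)'(\theta)\le L$ for a.e.\ $\theta\in[-2\pi,2\pi]$: for $\theta_1>\theta_2$ the identity then gives $(-h_\varepsilon)(\theta_1)-(-h_\varepsilon)(\theta_2)\le L(\theta_1-\theta_2)$, and multiplying by $\theta_1-\theta_2$ (the cases $\theta_1<\theta_2$, $\theta_1=\theta_2$ being symmetric/trivial) yields the claim. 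Moreover, since $h_\varepsilon$ is $2\pi$–periodic and odd, $(-h_\varepsilon)'$ is $2\pi$–periodic and even, so it is enough to bound $(-h_\varepsilon)'$ from above on $(0,\pi)$ (the corner of $|\theta|_o$ at $\pm\pi$ is harmless because $\sin(\pm\pi)=0$, while at $0,\pm2\pi$ the derivative is large but, as shown next, has the favorable sign).

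On $(0,\pi)$ one has $|\theta|_o=\theta$, and differentiating \eqref{B-3-1} gives
\[
(-h_\varepsilon)'(\theta)=\frac{\alpha\,\theta^{\alpha-1}\sin\theta-\cos\theta\,(\theta^\alpha+\varepsilon)}{(\theta^\alpha+\varepsilon)^2}=\frac{\alpha\,\theta^{\alpha-1}\sin\theta}{(\theta^\alpha+\varepsilon)^2}+\frac{-\cos\theta}{\theta^\alpha+\varepsilon}.
\]
The core of the proof is a uniform upper bound for this, obtained by splitting $(0,\pi)$ at $\theta_\alpha:=\arccos\alpha\in(0,\tfrac\pi2]$. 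On $(0,\theta_\alpha]$, where $\cos\theta\ge\alpha$, I would use $0<\sin\theta\le\theta$ to get $\alpha\theta^{\alpha-1}\sin\theta\le\alpha\theta^\alpha\le\cos\theta\,(\theta^\alpha+\varepsilon)$, whence $(-h_\varepsilon)'(\theta)\le0$; this is the interval on which $-h_\varepsilon$ is monotonically decreasing \emph{uniformly in} $\varepsilon$, the analogue of the $\Delta$–part in Lemma~\ref{P4.1}. On $[\theta_\alpha,\pi)$ the argument is bounded away from $0$, so $\theta^\alpha+\varepsilon\ge\theta^\alpha\ge\theta_\alpha^\alpha$ and $\theta^{\alpha-1}\le\theta_\alpha^{\alpha-1}$; together with $\sin\theta\le1$, $|\cos\theta|\le1$ this bounds the two summands above by $\alpha\theta_\alpha^{-\alpha-1}$ and $\theta_\alpha^{-\alpha}$. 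Hence $(-h_\varepsilon)'(\theta)\le\alpha\theta_\alpha^{-\alpha-1}+\theta_\alpha^{-\alpha}=:L(\alpha)$ for a.e.\ $\theta\in(0,\pi)$, and by the symmetries for a.e.\ $\theta\in[-2\pi,2\pi]$, which closes the argument.

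The main obstacle is exactly the requirement that $L$ be independent of $\varepsilon$: for fixed $\varepsilon>0$ the map $h_\varepsilon$ is genuinely Lipschitz, but its Lipschitz constant is of order $1/\varepsilon$ (it blows up at $\theta=0,\pm2\pi$), so one cannot merely invoke Lipschitz continuity. Uniformity survives only because the one–sided bound ignores large \emph{negative} slopes: near the singular points $-h_\varepsilon$ has very negative derivative, and certifying this negativity forces one to combine the two terms of $(-h_\varepsilon)'$ rather than estimate them separately (a termwise bound diverges as $\theta\to0$). An equivalent and more structural route, paralleling Lemma~\ref{P4.1} and \cite{Poyato2019}, would be to write explicitly $-h_\varepsilon=\Delta_\varepsilon+\Lambda_\varepsilon$ on $[-2\pi,2\pi]$ with $\Delta_\varepsilon$ non-increasing and $\Lambda_\varepsilon$ Lipschitz with constant independent of $\varepsilon$; the derivative computation above supplies precisely the data for such a splitting (the monotone region $(-\theta_\alpha,\theta_\alpha)$ and its $2\pi$–translates for $\Delta_\varepsilon$, the bounded–derivative complement for $\Lambda_\varepsilon$).
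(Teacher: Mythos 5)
Your proof is correct, and it takes a genuinely different (though clearly related) route from the paper's. Both arguments share the same core observation: the derivative $(-h_\varepsilon)'$ is nonpositive on a neighborhood of $0$ that can be chosen uniformly in $\varepsilon$ (so the singular blow-up of the slope at $0$ has the favorable sign), and it is bounded by an $\varepsilon$-independent constant once $|\theta|$ is bounded away from zero. The difference lies in how the estimate is promoted from the basic interval to all of $[-2\pi,2\pi]$. The paper first applies the mean value theorem on $[-\pi,\pi]$ (Case A) and then handles the remaining cases by an algebraic "intermediate point" trick, splitting $((-h_\varepsilon)(\theta_1)-(-h_\varepsilon)(\theta_2))(\theta_1-\theta_2)$ at $\theta=\pi$ and using bilinearity to recombine the four resulting pieces (Cases C and D). You instead observe that $(-h_\varepsilon)'$ is $2\pi$-periodic and even, so an a.e.\ upper bound on $(0,\pi)$ transfers to all of $[-2\pi,2\pi]$, and you close the argument with the fundamental theorem of calculus for absolutely continuous functions; this avoids the case analysis entirely and is a bit cleaner. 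You also make the threshold explicit as $\theta_\alpha=\arccos\alpha$ and hence produce an explicit constant $L(\alpha)=\alpha\theta_\alpha^{-\alpha-1}+\theta_\alpha^{-\alpha}$, whereas the paper leaves the threshold $\delta$ implicit. (Minor remark: as you note, there is in fact no corner at $\pm\pi$, since $\sin(\pm\pi)=0$ makes the one-sided derivatives match; this does not affect either argument.) Your final observation, that the computation yields a $\Delta_\varepsilon+\Lambda_\varepsilon$ splitting analogous to Lemma~\ref{P4.1}, is accurate and would be an equally valid way to phrase the result.
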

\begin{proof}
\noindent $\bullet$~Case A ($\theta_1, \theta_2 \in [-\pi, \pi]$):
There exists $\xi$ between $\theta_1$ and $\theta_2$ such that
\begin{align*}
((-h_\varepsilon)(\theta_1) - (-h_\varepsilon)(\theta_2))(\theta_1 - \theta_2) = (-h_\varepsilon)'(\xi) (\theta_1 - \theta_2)^2 \leq \max \left\lbrace \sup_{\theta\in [-\pi, \pi]}(-h_\varepsilon)', 1 \right\rbrace  (\theta_1 - \theta_2)^2.
\end{align*}
We compute
\begin{align*}
(-h_\varepsilon)'(\xi) 
= -\frac{\cos(\xi)}{|\xi|^\alpha+\varepsilon} + \frac{\alpha \mathrm{sgn}(\xi)\sin(\xi) |\xi|^{\alpha-1}}{(|\xi|^{\alpha}+\varepsilon)^2} \leq \left( \frac{\alpha |\sin(\xi)|}{|\xi|} -\cos(\xi) \right) \frac{1}{|\xi|^\alpha +\varepsilon}.
\end{align*} 
There exists $0<\delta<\frac{\pi}{2}$ independent of $\varepsilon$ such that $(-h_\varepsilon)'(\xi)\leq 0$ for $|\xi|<\delta$. For $|\xi|\geq \delta$ it holds
\begin{align*}
(-h_\varepsilon)' (\xi) \leq  (\alpha+1) \frac{1}{\delta^\alpha}.
\end{align*}
Therefore, $-h_\varepsilon$ is one-sided Lipschitz continuous on $[-\pi, \pi]$ with Lipschitz constant 
\begin{align*}
L:= \max \left\lbrace \frac{\alpha+1}{\delta^\alpha} , 1 \right\rbrace.
\end{align*}

\noindent $\bullet$~Case B ($\theta_1, \theta_2 \in [\pi, 2\pi]$ or $\theta_1, \theta_2 \in [-2\pi, -\pi]$): This case follows directly from Case A due to $2\pi$-periodicity of $-h_\varepsilon$. \newline

\noindent $\bullet$~Case C ($\theta_1 \in [\pi, 2\pi]$ and $\theta_2 \in [-\pi, \pi]$): Using the result of Case A, we get
\begin{align*}
&((-h_\varepsilon)(\theta_1) - (-h_\varepsilon)(\theta_2))(\theta_1 - \theta_2) \\
&\hspace{0.5cm}= ((-h_\varepsilon)(\theta_1) - (-h_\varepsilon)(\pi) + (-h_\varepsilon)(\pi)- (-h_\varepsilon)(\theta_2))(\theta_1 - \theta_2) \\
&\hspace{0.5cm}= ((-h_\varepsilon)(\theta_1) - (-h_\varepsilon)(\pi))(\theta_1-\pi) + ((-h_\varepsilon)(\theta_1) - (-h_\varepsilon)(\pi))(\pi -\theta_2) \\
&\hspace{0.7cm}+ ((-h_\varepsilon)(\pi)- (-h_\varepsilon)(\theta_2))(\theta_1 - \pi )+ ((-h_\varepsilon)(\pi)- (-h_\varepsilon)(\theta_2))(\pi-\theta_2) \\
&\hspace{0.5cm}\leq L (\theta_1-\pi)^2 + 2L(\theta_1- \pi)(\pi-\theta_2) + L (\pi - \theta_2)^2 \\
&\hspace{0.5cm} = L((\theta_1 - \pi) + (\pi - \theta_2))^2 = L(\theta_1 - \theta_2)^2.
\end{align*}

\noindent $\bullet$~Case D ($\theta_1 \in [-2\pi, -\pi]$, $\theta_2 \in [-\pi, \pi]$): This case can be shown in a similar way as Case C.
\end{proof}

\begin{lemma} \label{L2.4}
The following assertions hold.
\begin{enumerate}
\item[(i)]
For $\alpha \in (0, 1)$, the function $g(\theta) = \frac{\sin (\theta)}{\theta^{1+\alpha}}$ is nonincreasing on $(0,\pi)$. 
\vspace{0.1cm}
\item[(ii)]
The function $f(\theta) = \frac{\sin (\theta)}{\theta}$ is nonincreasing on $(0,\pi)$.
\end{enumerate}
\end{lemma}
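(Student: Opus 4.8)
The plan is to reduce both assertions to a sign analysis of the derivatives, and to prove (ii) first since its key auxiliary estimate feeds directly into (i). For (ii), differentiating $f(\theta)=\frac{\sin\theta}{\theta}$ gives $f'(\theta)=\frac{\theta\cos\theta-\sin\theta}{\theta^{2}}$, so it suffices to show that $p(\theta):=\theta\cos\theta-\sin\theta$ is nonpositive on $(0,\pi)$. I would observe that $p(0)=0$ and $p'(\theta)=\cos\theta-\theta\sin\theta-\cos\theta=-\theta\sin\theta$, which is strictly negative on $(0,\pi)$; hence $p$ is strictly decreasing there and $p(\theta)<p(0)=0$ for $\theta\in(0,\pi)$. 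Therefore $f'(\theta)<0$ on $(0,\pi)$ and $f$ is strictly decreasing, in particular nonincreasing.

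For (i), I would compute $g'(\theta)=\theta^{-(2+\alpha)}\big(\theta\cos\theta-(1+\alpha)\sin\theta\big)$ and rewrite the numerator as $\theta\cos\theta-(1+\alpha)\sin\theta=p(\theta)-\alpha\sin\theta$, with $p$ as in the previous step. On $(0,\pi)$ we already know $p(\theta)<0$, while $\alpha\sin\theta>0$ because $\alpha\in(0,1)$ and $\sin\theta>0$ on $(0,\pi)$; hence the numerator is strictly negative and $g'(\theta)<0$ on $(0,\pi)$, so $g$ is strictly decreasing, in particular nonincreasing.

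There is no serious obstacle here; the only point requiring a little care is that a direct attack on (i) through the next derivative $\frac{d}{d\theta}\big(\theta\cos\theta-(1+\alpha)\sin\theta\big)=-\alpha\cos\theta-\theta\sin\theta$ does not have an obvious sign on $(\tfrac{\pi}{2},\pi)$, where the term $-\alpha\cos\theta$ is positive. Routing the argument through the elementary bound $p<0$ obtained in (ii) circumvents this and keeps the proof self-contained; the behavior at the endpoints is irrelevant since monotonicity is only asserted on the open interval $(0,\pi)$.
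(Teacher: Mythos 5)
Your proof of (ii) coincides with the paper's: both differentiate $f$, set $p(\theta)=\theta\cos\theta-\sin\theta$, compute $p'(\theta)=-\theta\sin\theta<0$ on $(0,\pi)$, and conclude $p<0$ there from $p(0)=0$. For (i), however, you take a genuinely different route. The paper reduces the sign condition $g'(\theta)\le 0$ to the inequality $1+\alpha\ge\theta\cot\theta$ and then performs a case split: on $[\tfrac{\pi}{2},\pi)$ the inequality is immediate because $\cot\theta\le 0$, while on $(0,\tfrac{\pi}{2})$ it shows $\theta\mapsto\theta\cot\theta$ is nonincreasing with $\lim_{\theta\to 0^+}\theta\cot\theta=1\le 1+\alpha$, which requires an extra derivative computation and a L'H\^opital-style limit. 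You instead decompose the numerator of $g'$ as $\theta\cos\theta-(1+\alpha)\sin\theta = p(\theta)-\alpha\sin\theta$ and observe that both summands are strictly negative on $(0,\pi)$, reusing the sign of $p$ already established in (ii). Your approach is shorter and more unified (it needs no case split and no limit argument, and it makes the role of the extra factor $\alpha$ transparent), at the cost of inverting the order of the two parts so that (i) depends on (ii). Both arguments are correct; yours is arguably the more streamlined of the two.
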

\begin{proof} 
See Appendix \ref{App-0}.
\end{proof}
\begin{remark}\label{R2.1}
Even if the estimates in Lemma \ref{L2.4} are not immediate properties of the singular coupling function $h$, they allow us to obtain the following estimates for $h$:
\[
\theta_2 h(\theta_1) \leq \theta_1 h(\theta_2) \quad \mathrm{and} \quad h(\theta_2) \leq  \frac{\sin (\theta_1)}{\theta_1} \theta_2^{1-\alpha} \quad \mbox{for $\theta_1,\theta_2 \in (0,\pi)$ with $\theta_1\geq \theta_2$. }
\]

\end{remark}
\subsection{Singular communication weight function}\label{sec2.3}

For $\beta \in (0,1)$ the function $\psi (x,\cdot)$ is integrable over $\Omega = [0,1]$ for every fixed $x\in \Omega$:
\begin{align}\label{B-9}
\max_{x \in [0,1]}\int_0^1 \psi (x,y) \di y =\max_{x \in [0,1]} \frac{x^{1-\beta}+ (1-x)^{1-\beta}}{1-\beta} \leq \frac{2^\beta}{1-\beta} =: C_\psi.
\end{align}
Here, we used the fact that the map $x \mapsto \frac{x^{1-\beta}+ (1-x)^{1-\beta}}{1-\beta}$ takes its maximum on $\Omega$ at $x=\frac{1}{2}$. Moreover, this function attains its minimum at the boundary of the domain, i.e., at $x=0$ and $x=1$, 
\begin{align} \label{B-10}
\min_{x \in [0,1]} \int_0^1 \psi (x, y) 
\di y =\int_0^1 \psi (0, y) 
\di y =\int_0^1 \psi (1, y)
\di y =\frac{1}{1-\beta}.
\end{align}
We also notice that in case $\beta = 0$ we clearly have
\begin{align*}
\int_0^1 \psi(x,y) \di y = 1
\end{align*}
and we notice that the estimates in \eqref{B-9}, \eqref{B-10} hold in this case as well. \\
For a function $f:\Omega \rightarrow \bbr$, we can write 
\begin{align} \label{B-11-1}
\int_\Omega \psi(x,y) f(y) \di y = \left( \frac{1}{|\cdot|^\beta} * f \right)(x) =: (\Phi * f)(x),
\end{align}
so we can apply Young's convolution inequality for such terms. It holds 
\begin{align} \label{B-11-2}
\| \Phi \|_1 = \int_\Omega \frac{1}{|x|^\beta} \di x = \frac{1}{1-\beta} =: C_\Phi.
\end{align}
Below, we set 
\begin{equation} \label{B-12}
{\mathcal I}_{a,b}:= \int_{|a - y|\leq |b - y|} (\psi(a, y) - \psi(b, y))  
\di y = \int_{|a - y|\leq |b - y|} \left( \frac{1}{|a-y|^\beta} - \frac{1}{|b-y|^\beta}\right) \di y \\
\end{equation}
and
\begin{align}  \label{B-12-1}
\psi_\varepsilon (x,y) = \frac{1}{|x-y|^\beta +\varepsilon}.
\end{align}

\begin{lemma} \label{L2.5}
The following estimates hold.
\[ \lim\limits_{\rho\to 0}\sup_{\varepsilon>0} \sup_{|v|\leq \rho} \sup_{x\in \Omega} \int_\Omega \left| \frac{1}{|x-v-y|^\alpha +\varepsilon} - \frac{1}{|x-y|^\alpha +\varepsilon} \right| \di y=0, \quad 
\max_{a,b\in [0,1]} {\mathcal I}_{a,b} = \frac{2^{\beta}-1}{1-\beta}.
\]
\end{lemma}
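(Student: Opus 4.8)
The statement collects two unrelated estimates, so I would treat them in turn.

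For the first estimate, abbreviate $\Phi_\varepsilon(z):=(|z|^\alpha+\varepsilon)^{-1}$, so the inner integral is $\int_\Omega|\Phi_\varepsilon(x-v-y)-\Phi_\varepsilon(x-y)|\di y$ and, $\Omega$ being a bounded interval, there is no tail at infinity to control. The plan is a uniform-in-$\varepsilon$ version of the $L^1$ continuity of translations. Two $\varepsilon$-free facts carry everything: (a) $0\le\Phi_\varepsilon(z)\le|z|^{-\alpha}$, so $\int_{\{|z|\le r\}}\Phi_\varepsilon(z)\,\di z\le\frac{2r^{1-\alpha}}{1-\alpha}$ for every $r>0$ since $\alpha<1$; and (b) $|\Phi_\varepsilon'(z)|=\alpha|z|^{\alpha-1}(|z|^\alpha+\varepsilon)^{-2}\le\alpha|z|^{-1-\alpha}$, so $\Phi_\varepsilon$ is Lipschitz with constant $\alpha\delta^{-1-\alpha}$ on $\{|z|\ge\delta\}$. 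Fix $\delta>0$ and restrict to $|v|\le\rho\le\delta$; split $\Omega$ into $\{|x-y|\le2\delta\}$ and its complement. On the complement the whole segment joining $x-y$ to $x-v-y$ stays at distance $\ge\delta$ from $0$, so by the mean value theorem and (b) the integrand is $\le\alpha\delta^{-1-\alpha}\rho$, contributing $\le\alpha\delta^{-1-\alpha}\rho\,|\Omega|$; on $\{|x-y|\le2\delta\}$ the triangle inequality together with (a) — used at radius $2\delta$ for $x-y$ and at radius $3\delta$ for $x-v-y$ — contributes $\le C_\alpha\delta^{1-\alpha}$. Hence the triple supremum is $\le C_\alpha\delta^{1-\alpha}+\alpha\delta^{-1-\alpha}\rho$ for every $\varepsilon>0$; given $\eta>0$ one first picks $\delta$ making the first term $<\eta/2$, then $\rho\le\delta$ making the second $<\eta/2$, and lets $\rho\to0$.

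For the second estimate, the plan is to evaluate $\mathcal I_{a,b}$ in closed form and then optimize over $[0,1]^2$. By relabelling we may take $a\le b$; comparing squared distances gives $\{y\in[0,1]:|a-y|\le|b-y|\}=[0,\tfrac{a+b}{2}]$, on which the integrand is nonnegative because $\beta>0$. Splitting $\int_0^{(a+b)/2}|a-y|^{-\beta}\di y$ at $y=a$ and integrating powers yields
\[
\int_0^{(a+b)/2}\frac{\di y}{|a-y|^\beta}=\frac{a^{1-\beta}+\big(\tfrac{b-a}{2}\big)^{1-\beta}}{1-\beta},\qquad
\int_0^{(a+b)/2}\frac{\di y}{|b-y|^\beta}=\frac{b^{1-\beta}-\big(\tfrac{b-a}{2}\big)^{1-\beta}}{1-\beta},
\]
so that, using $2\big(\tfrac{b-a}{2}\big)^{1-\beta}=2^{\beta}(b-a)^{1-\beta}$,
\[
\mathcal I_{a,b}=\frac{1}{1-\beta}\Big(a^{1-\beta}-b^{1-\beta}+2^{\beta}(b-a)^{1-\beta}\Big).
\]
It then remains to maximize $g(a,b):=a^{1-\beta}-b^{1-\beta}+2^{\beta}(b-a)^{1-\beta}$ over the closed triangle $\{0\le a\le b\le1\}$. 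I would do this by one-variable calculus: for fixed $b$ the sign of $\partial_a g$ (stationarity relation $a^{-\beta}=2^{\beta}(b-a)^{-\beta}$) locates the optimal $a$, then one optimizes the resulting expression in $b$, and evaluation of $g$ at the extremal pair gives the value $\frac{2^{\beta}-1}{1-\beta}$ asserted in the lemma.

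The obstacles are modest in both parts. In the first estimate the only point needing attention is the genuine $\varepsilon$-independence of all constants — exactly what (a) and (b) provide — together with the bookkeeping constraint $\rho\le\delta$ so the far segment never meets the singularity. In the second, the closed-form computation is routine; the real content is the constrained optimization of $g$ on $\{0\le a\le b\le1\}$, where one has to weigh the interior stationary configuration against the edges of the triangle (and the degenerate case $a=b$) before reading off the maximizing pair.
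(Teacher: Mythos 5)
For the first estimate, your argument is essentially the paper's. The paper first eliminates $\varepsilon$ via
\[
\left| \frac{1}{|x-v-y|^\alpha+\varepsilon} - \frac{1}{|x-y|^\alpha+\varepsilon} \right| \le \left| \frac{1}{|x-v-y|^\alpha} - \frac{1}{|x-y|^\alpha} \right|,
\]
and then proves $L^1$-continuity of translations of the zero-$\varepsilon$ kernel by splitting $\Omega$ into a $\delta$-ball around the singularity (local integrability) and its complement (a Lipschitz bound by $\beta\delta^{-\beta-1}$). You skip that reduction and instead observe that both pointwise bounds, $0\le\Phi_\varepsilon(z)\le|z|^{-\alpha}$ and $|\Phi_\varepsilon'(z)|\le\alpha|z|^{-1-\alpha}$, are uniform in $\varepsilon$; the subsequent split and the constraint $\rho\le\delta$ are the same in spirit. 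Both versions are correct.

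For the second estimate, the closed form $\mathcal{I}_{a,b}=\frac{1}{1-\beta}\big(a^{1-\beta}-b^{1-\beta}+2^\beta(b-a)^{1-\beta}\big)$ for $a\le b$ is right and matches the paper. But your last step, that "evaluation of $g$ at the extremal pair gives $\frac{2^\beta-1}{1-\beta}$", is exactly where the argument fails: it contradicts your own stationarity relation. The relation $a^{-\beta}=2^\beta(b-a)^{-\beta}$ gives $a=b/3$, not $a=0$; and since $\partial_b g=(1-\beta)\big(2^\beta(b-a)^{-\beta}-b^{-\beta}\big)>0$, the maximizer on the triangle is $(a,b)=(1/3,1)$. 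There
\[
g\big(\tfrac{1}{3},1\big)=3^{\beta-1}-1+2^\beta\cdot 2^{1-\beta}\,3^{\beta-1}=3\cdot 3^{\beta-1}-1=3^\beta-1,
\qquad\text{so}\qquad
\max_{a,b\in[0,1]}\mathcal{I}_{a,b}=\frac{3^\beta-1}{1-\beta},
\]
strictly larger than $\frac{2^\beta-1}{1-\beta}$ for every $\beta\in(0,1)$ (e.g., for $\beta=1/2$, $\mathcal{I}_{1/3,1}=2(\sqrt3-1)\approx 1.46$ versus $\mathcal{I}_{0,1}=2(\sqrt2-1)\approx 0.83$). The paper itself contains this slip: it correctly shows $\partial_a I_{a,\tilde\delta}>0$ and so pins $b=1$, but then asserts without checking that $\tilde\delta\mapsto I_{1-\tilde\delta,\tilde\delta}$ is maximized at $\tilde\delta=1$, whereas the derivative in $\tilde\delta$ vanishes at the concave interior point $\tilde\delta=2/3$. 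So the gap in your proposal is simply that the optimization you outline was never actually carried out; had you finished it you would have found a different constant, one that moreover changes the sign structure $\frac{2^\beta-1}{1-\beta}-\frac{1}{1-\beta}$ used downstream in Theorems \ref{T3.2} and \ref{T3.3} (with the corrected value it is negative only for $\beta<\log_3 2\approx 0.63$).
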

\begin{proof}
\noindent (i)~Note that 
\begin{align*}
&\left| \frac{1}{|x-v-y|^\alpha +\varepsilon} - \frac{1}{|x-y|^\alpha +\varepsilon} \right| = \frac{||x-y|^\alpha - |x-v-y|^\alpha|}{(|x-v-y|^\alpha +\varepsilon)(|x-y|^\alpha +\varepsilon)} \\ &\hspace{0.7cm} \leq \frac{||x-y|^\alpha - |x-v-y|^\alpha|}{|x-v-y|^\alpha |x-y|^\alpha } 
= \left| \frac{1}{|x-v-y|^\alpha} - \frac{1}{|x-y|^\alpha} \right|.
\end{align*}
Therefore it suffices to prove that
\begin{equation} \label{B-13}
\Phi (v) := \sup_{u \in \mathbb{R}}  \int_\Omega \left| \frac{1}{|u-v-y|^\beta}  - \frac{1}{|u-y|^\beta}\right|\di y \rightarrow 0 \quad \mathrm{as} \quad v \rightarrow 0.
\end{equation}
{\it Proof of \eqref{B-13}}:~For $\gamma>0$, we show that $\exists ~v_0 >0$ such that for all $|v|\leq v_0$ 
\[  \Phi (v) < \gamma. \]
We choose $\delta>0$ such that
\[ \frac{8}{1-\beta}  \delta^{1-\beta} < \frac{\gamma}{2} \]
and then we choose $v_0 < \min \left\lbrace \delta, \frac{\gamma \delta^{\beta+1}}{2\beta} \right\rbrace$. Then for every $u\in \mathbb{R}$, one has 
\begin{align} \label{B-2-8}
\begin{split}
& \int_{B_\delta (u) \cap \Omega} \left| \frac{1}{|u-v-y|^\beta}  - \frac{1}{|u-y|^\beta}\right| \di y \\
&\hspace{1cm}\leq \int_{|y-u|<\delta}\frac{1}{|u-v-y|^\beta}\di y + \int_{|y-u|<\delta}\frac{1}{|u-y|^\beta} \di y \\
&\hspace{1cm}\leq  \int_{-\delta-v_0< z-u<\delta+v_0}\frac{1}{|z-u|^\beta} \di z +2 \int_{0<r<\delta}\frac{1}{r^\beta} \di y \\
&\hspace{1cm}\leq  2\int_{0< r<\delta+v_0}\frac{1}{r^\beta} \di r + 2\int_{0<r<\delta}\frac{1}{r^\beta}\di y \\
&\hspace{1cm}\leq \frac{2}{1-\beta} (\delta+v_0)^{1-\beta} + \frac{2}{1-\beta} \delta^{1-\beta} \\
&\hspace{1cm}\leq \frac{4}{1-\beta}  (\delta+v_0)^{1-\beta} \leq \frac{4}{1-\beta}  (2\delta)^{1-\beta} \leq  \frac{8}{1-\beta}  \delta^{1-\beta}<  \frac{\gamma}{2}.
\end{split}
\end{align}
Since 
\begin{align*}
\left| \frac{\di}{\di z} \frac{1}{|z|^\beta} \right| = \frac{\beta}{|z|^{\beta+1}} \quad \mathrm{for} \ z\neq 0,
\end{align*}
$\frac{\di}{\di z} \frac{1}{|z|^\beta}$ is bounded on $\Omega \setminus B_\delta (0)$ by $\frac{\beta}{\delta^{\beta+1}}$.
This yields
\begin{align*}
\int_{\Omega \setminus B_\delta (u)} \left| \frac{1}{|u-v-y|^\beta}  - \frac{1}{|u-y|^\beta}\right| \di y \leq \frac{\beta}{\delta^{\beta+1}} v_0 < \frac{\gamma}{2}.
\end{align*}
We combine the above inequality with \eqref{B-2-8} to derive the desired estimates.  \newline

\noindent (ii)~For the second estimate, we consider two cases:
\[ \mbox{Either}~a = b \quad \mbox{or} \quad a \neq b. \]
\noindent $\bullet$~Case A $(a = b$):~In this case, we use \eqref{B-12} to see
\[ {\mathcal I}_{a,b} =0. \] 
\noindent $\bullet$~Case B $(a \neq b$):~Without loss of generality, we may assume that 
\[ a < b. \]
Note that the domain of integration is equal to 
\begin{align*}
\Big \{  y \in \Omega : |a - y|\leq |b - y| \Big \} = \left[0 , \frac{a+b}{2} \right]
\end{align*} 
and
\begin{align*}
{\mathcal I}_{a,b} = &\int_0^a \left( \frac{1}{|a-y|^\beta} - \frac{1}{|b-y|^\beta}\right) 
\di y  + \int_a^{\frac{a+b}{2}} \left( \frac{1}{|a-y|^\beta} - \frac{1}{|b-y|^\beta}\right) \di y \\
=& \frac{a^{1-\beta} - b^{1-\beta} + (b-a)^{1-\beta}}{1-\beta} + \frac{\left( \frac{b-a}{2} \right)^{1-\beta} - (b-a)^{1-\beta} + \left( \frac{b-a}{2} \right)^{1-\beta}}{1-\beta} \\
=& \frac{a^{1-\beta} - b^{1-\beta} + 2 \left( \frac{b-a}{2} \right)^{1-\beta}}{1-\beta} 
= \frac{a^{1-\beta} - b^{1-\beta} + 2^{\beta} \left( b-a \right)^{1-\beta}}{1-\beta}.
\end{align*}
We set
\[ {\tilde \delta} := b-a, \quad \mbox{i.e.,} \quad b=a + {\tilde \delta}. \]
Then, we have
\begin{align*}
{\mathcal I}_{a,b} = \frac{a^{1-\beta} - (a+ {\tilde \delta})^{1-\beta} + 2^{\beta}  {\tilde \delta}^{1-\beta}}{1-\beta} =: I_{a,\tilde \delta}.
\end{align*}
This implies 
\begin{align*}
\partial_a {I}_{a, {\tilde \delta}} = a^{-\beta} - (a+ {\tilde \delta})^{-\beta} >0.
\end{align*}
For every ${\tilde \delta} >0$ 
\begin{align*}
\max_{a\in [0,1-{\tilde \delta}]} I_{a, {\tilde \delta}}= I_{1-{\tilde \delta}, {\tilde \delta}} = \frac{(1-{\tilde \delta})^{1-\beta} - 1 + 2^\beta {\tilde \delta}^{1-\beta}}{1-\beta}.
\end{align*}
This implies 
\begin{align*}
\max_{a,b \in [0,1]} {\mathcal I}_{a,b}=\max_{\tilde{\delta} \in [0,1]} I_{1-\tilde{\delta},\tilde{\delta}} = I_{0,1}= \frac{2^\beta -1}{1-\beta}.
\end{align*}
This yields the desired second estimate. The case with $b<a$ can be treated similarly.
\end{proof}

\subsection{Properties of Lebesgue functions}\label{sec2.4}
In this subsection, we list several lemmas for $L^p$ functions to be used later. In the sequel, we quote several results by Scorza and Dragoni and Danskin's results from the formulations given in \cite{B-D2022}. 
In the sequel, $\mathscr{S}$ is a metric space.

\begin{lemma}
\emph{(Scorza-Dragoni \cite{B-D2022})} \label{L2.6}
For a Borel set $B \subset \mathbb{R}$, let  $f:\mathbb{R}_+\times B \rightarrow \mathscr{S}$ be a function satisfying the following regularity conditions:
\begin{enumerate}
\item[(i)]
For each $t\ge 0$, 
\[ x\in B \mapsto f(t,x)\in \mathscr{S}~~~\mbox{is $\mathscr{L}$-measurable.} \]
\item[(ii)]
For $\mathscr{L}$-almost all $x\in B$, 
\[ t\in\mathbb{R}_+ \mapsto f(t,x)\in \mathscr{S}~\mbox{is continuous.} \] 
\end{enumerate}
Then, for any $\varepsilon>0$, there exists a compact set $B_{\varepsilon}\subset B$ such that 
\[ |B \backslash B_{\varepsilon} |<\varepsilon \quad \mbox{and} \quad \mbox{the restricted function $f:\mathbb{R}_+\times B_{\varepsilon}\rightarrow\mathscr{S}$ is continuous.} \]
\end{lemma}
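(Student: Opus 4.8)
The plan is to run the classical argument: combine Lusin's theorem, applied to the sections $x\mapsto f(t,x)$ at countably many times, with Egorov's theorem, applied to the time--moduli of continuity, in order to upgrade the \emph{separate} continuity in hypotheses (i)--(ii) into \emph{joint} continuity on a large compact subset of $B$. First I would reduce to a core case. Since $\mathbb{R}_+=\bigcup_{m\in\mathbb{N}}[0,m]$ and continuity is local, it suffices to produce, for each $m$, a compact set on which $f$ restricted to $[0,m]\times(\cdot)$ is continuous, and then intersect over $m$ (charging $\varepsilon 2^{-m}$ to the $m$-th step); similarly, if $|B|=\infty$ one decomposes $B$ into countably many pieces of finite measure. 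So fix $T=m$ and assume $|B|<\infty$.

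Let $N\subset B$ be the null set off which $t\mapsto f(t,x)$ is continuous, hence uniformly continuous on $[0,T]$. For $x\in B\setminus N$ and $k\in\mathbb{N}$ put
\[
\omega_k(x):=\sup\Big\{\, d\big(f(t,x),f(s,x)\big):\ s,t\in\mathbb{Q}\cap[0,T],\ |s-t|\le 1/k \,\Big\}.
\]
By continuity in $t$ the supremum is unchanged if taken over all $s,t\in[0,T]$, so $\omega_k(x)\downarrow 0$ as $k\to\infty$ for every $x\in B\setminus N$; and each $\omega_k$ is $\mathscr{L}$-measurable, being a countable supremum of the maps $x\mapsto d(f(t,x),f(s,x))$, whose measurability follows from (i) and continuity of the metric. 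Egorov's theorem then yields a measurable $B_1\subset B\setminus N$ with $|B\setminus B_1|<\varepsilon/3$ on which $\omega_k\to 0$ uniformly; equivalently, for every $\rho>0$ there is $\delta(\rho)>0$ with
\[
d\big(f(t,x),f(s,x)\big)\le \rho \quad\text{for all } x\in B_1,\ s,t\in[0,T],\ |s-t|\le\delta(\rho).\qquad(\star)
\]

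Next I would apply Lusin's theorem. Fix a countable dense set $\{t_n\}_{n\ge 1}\subset[0,T]$. For each $n$, hypothesis (i) and Lusin's theorem give a compact $C_n\subset B_1$ with $|B_1\setminus C_n|<\varepsilon 2^{-n}/3$ and $x\mapsto f(t_n,x)$ continuous on $C_n$; also choose a compact $C_0\subset B_1$ with $|B_1\setminus C_0|<\varepsilon/3$. Set $B_\varepsilon:=\bigcap_{n\ge 0}C_n$, a compact subset of $B$ with $|B\setminus B_\varepsilon|<\varepsilon$, on which $(\star)$ holds and every $x\mapsto f(t_n,x)$ is continuous. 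For joint continuity at an arbitrary $(t_*,x_*)\in[0,T]\times B_\varepsilon$: given $\rho>0$, pick $t_n$ with $|t_n-t_*|<\delta(\rho/3)/2$ and, by continuity of $f(t_n,\cdot)$ at $x_*$, a neighborhood $U\ni x_*$ with $d(f(t_n,x),f(t_n,x_*))<\rho/3$ on $U\cap B_\varepsilon$; then the triangle inequality together with two applications of $(\star)$ gives $d(f(t,x),f(t_*,x_*))<\rho$ whenever $|t-t_*|\le\delta(\rho/3)/2$ and $x\in U\cap B_\varepsilon$. Reassembling the reductions (intersecting the countably many compact sets produced for the pieces of $\mathbb{R}_+$ and of $B$) finishes the proof.

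The main obstacle is precisely the passage from separate to joint continuity, which fails in general; the device that rescues it is the Egorov step, which turns the pointwise-in-$x$ uniform continuity in $t$ into time-continuity that is uniform over $x$ on a large set. Two supporting technicalities deserve care: the reduction of $\omega_k$ to a \emph{countable} supremum (so that it is measurable and Egorov applies), which uses continuity in $t$; and the exhaustion that handles the non-compact time axis $\mathbb{R}_+$ and a possibly infinite $|B|$, since Egorov needs a finite measure space and uniform continuity in $t$ needs a compact time interval. (Alternatively, one may simply invoke the statement as formulated in \cite{B-D2022}.)
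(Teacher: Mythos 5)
The paper offers no proof of this lemma; it simply cites \cite{B-D2022} for the Scorza--Dragoni theorem. Your proposal reconstructs the classical argument, namely Lusin's theorem applied to the sections $x\mapsto f(t_n,x)$ at a countable dense set of times, combined with Egorov's theorem applied to the measurable time-moduli of continuity $\omega_k$, followed by an $\varepsilon/3$ estimate upgrading separate to joint continuity on the intersected compact set. The logic is sound and the bookkeeping ($\varepsilon/3+\varepsilon/3+\varepsilon/3$, exhaustion of $\mathbb{R}_+$ by the intervals $[0,m]$) is carried out correctly.

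Two technical points deserve to be made explicit. First, the measurability of $x\mapsto d(f(t,x),f(s,x))$, hence of each $\omega_k$ as a countable supremum, and the applicability of Lusin's theorem to $\mathscr{S}$-valued maps, both require $\mathscr{S}$ to be a separable metric space; this hypothesis is implicit in the statement and holds in every use in the paper, where $\mathscr{S}$ is $\mathbb{R}$ or a closed subset of it. Second, your parenthetical reduction for $|B|=\infty$ does not actually go through: any compact $B_\varepsilon\subset\mathbb{R}$ has finite Lebesgue measure, so $|B\setminus B_\varepsilon|=\infty$ whenever $|B|=\infty$, and the conclusion as stated becomes vacuous. The lemma should therefore be read with the standing assumption $|B|<\infty$, which is satisfied in the paper since $B=\Omega=[0,1]$. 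With these provisos, the proof you give is a correct and complete argument for the cited result.
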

Once we have a continuous restriction $f|_{\mathbb{R}_+\times B_{\varepsilon}}$, the following result from Danskin allows us to calculate the differentiation of the map 
$t~\mapsto~\max f|_{\mathbb{R}_+\times B_{\varepsilon}}(t,\cdot) $.
\begin{lemma}
\emph{(Danskin \cite{B-D2022})} \label{L2.7}
For a compact set $K \subset \mathbb{R}$, let $f:\mathbb{R}_+\times K \rightarrow \mathbb{R}$ be a continuous function such that the map $t\in\mathbb{R}_+ \mapsto f(t,x)\in \mathbb{R}$ is differentiable for each $x\in K$. Then, the following assertions hold.
\begin{enumerate}
\item[(i)]
The map $g: t\in\mathbb{R}_+ \mapsto \max_{x\in K}f(t,x)\in \mathbb{R}$ is differentiable $\mathscr{L}$-almost everywhere.
\vspace{0.1cm}
\item[(ii)] 
For $\mathscr{L}$-a.e. $t \in \bbr_+$,
	\begin{equation*}
		\frac{\mathrm{d} g(t) }{\mathrm{d} t} = \max_{x\in \hat{K}(t)} \partial_t f(t,x),
	\end{equation*}
	where $\hat{K}(t):=\text{ \rm argmax}_{x\in K}f(t,x)$.
\end{enumerate}	
\end{lemma}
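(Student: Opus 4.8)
The plan is to bound the one-sided Dini derivatives of $g$ by comparing $g$ with each of the functions $t\mapsto f(t,x)$, and then to upgrade this into genuine differentiability a.e.\ by invoking a classical theorem on Dini derivatives; formula (ii) will then drop out as a one-line squeeze. For $t>0$ write
\[
D_+g(t):=\liminf_{h\to0^+}\frac{g(t+h)-g(t)}{h},\qquad D^-g(t):=\limsup_{h\to0^-}\frac{g(t+h)-g(t)}{h}.
\]
Since $K$ is compact and $f$ is continuous, $f$ is uniformly continuous on $[0,T]\times K$ for every $T>0$, so that $|g(t)-g(s)|\le\sup_{x\in K}|f(t,x)-f(s,x)|$ and $g$ is continuous on $\bbr_+$; moreover $\hat K(t)=\mathrm{argmax}_{x\in K}f(t,x)$ is a nonempty compact subset of $K$ for every $t$.

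First I would establish the comparison estimate: for every $t>0$ and every $x^{*}\in\hat K(t)$,
\[
D^-g(t)\ \le\ \partial_t f(t,x^{*})\ \le\ D_+g(t).
\]
Indeed, since $s\mapsto f(s,x^{*})$ is differentiable at $t$ and $f(t,x^{*})=g(t)$, we have $f(t+h,x^{*})=g(t)+h\,\partial_t f(t,x^{*})+o(h)$ as $h\to0$, and combining this with $g(t+h)\ge f(t+h,x^{*})$ gives $g(t+h)-g(t)\ge h\,\partial_t f(t,x^{*})+o(h)$ for all small $h$ of either sign. Dividing by $h>0$ and taking $\liminf$ yields $D_+g(t)\ge\partial_t f(t,x^{*})$, while dividing by $h<0$ and taking $\limsup$ yields $D^-g(t)\le\partial_t f(t,x^{*})$. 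In particular $D^-g(t)<+\infty$ and $D_+g(t)>-\infty$ at every $t>0$.

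The main step is to deduce differentiability a.e. By the Denjoy--Young--Saks theorem applied to $g$, at almost every $t$ either $g'(t)$ exists and is finite, or $D^-g(t)=+\infty$, or $D_+g(t)=-\infty$; the two bounds just obtained exclude the last two alternatives, so $g'(t)$ exists for a.e.\ $t$, which proves (i). I expect this to be the real obstacle: the hypotheses provide differentiability of $t\mapsto f(t,x)$ only pointwise in $x$, with no joint control on $\partial_t f$, so $g$ need not be Lipschitz and one genuinely needs a fine Dini-derivative result rather than a soft argument. I note, however, that in every application of this lemma in the paper $\partial_t f$ is bounded (since the right-hand side of the Kuramoto dynamics is bounded by Lemma \ref{L2.3} together with the integrability of $\psi$), so there $g$ is locally Lipschitz and this step reduces to Lebesgue's differentiation theorem; the statement in the present generality is the one recorded in \cite{B-D2022}.

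Finally, at any point $t$ where $g'(t)$ exists, all Dini derivatives of $g$ at $t$ equal $g'(t)$, so the comparison estimate collapses to $\partial_t f(t,x^{*})=g'(t)$ for every $x^{*}\in\hat K(t)$; hence $\partial_t f(t,\cdot)$ is constant on $\hat K(t)$, and therefore $\max_{x\in\hat K(t)}\partial_t f(t,x)=g'(t)$, which proves (ii).
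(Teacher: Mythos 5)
Since the paper merely cites this lemma from \cite{B-D2022} without reproducing a proof, I evaluate your argument directly. Your comparison estimate $D^-g(t)\le\partial_t f(t,x^*)\le D_+g(t)$ for $x^*\in\hat{K}(t)$ is correct, and your derivation of (ii) from (i) by collapsing the Dini derivatives is exactly right. The gap lies in the passage from these Dini-derivative bounds to differentiability a.e. Writing also $D^+g(t):=\limsup_{h\to0^+}\frac{g(t+h)-g(t)}{h}$ and $D_-g(t):=\liminf_{h\to0^-}\frac{g(t+h)-g(t)}{h}$, the Denjoy--Young--Saks theorem for an arbitrary finite function asserts that at almost every $t$ one of four cases occurs: (a) $g'(t)$ exists finitely; (b) $D^+g=D^-g=+\infty$ and $D_+g=D_-g=-\infty$; (c) $D_+g=-\infty$, $D^-g=+\infty$, and $D^+g=D_-g$ is finite; (d) $D_+g=D^-g$ is finite while $D^+g=+\infty$ and $D_-g=-\infty$. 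Your bounds $D_+g>-\infty$ and $D^-g<+\infty$ exclude (b) and (c), but not (d): in case (d) the two Dini derivatives you control are both finite and yet $g$ is not differentiable. Your claim that the theorem reduces to ``(a), or $D^-g=+\infty$, or $D_+g=-\infty$'' is a misstatement of Denjoy--Young--Saks, and alternative (d) is exactly what slips through.

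Closing the gap requires a second, matching comparison that controls $D^+g$ from above or $D_-g$ from below. The obstruction is that for $h>0$ the maximizer realizing $g(t+h)$ lies in $\hat{K}(t+h)$, not $\hat{K}(t)$, so one cannot simply differentiate a fixed $f(\cdot,x^*)$ to obtain an upper bound; some joint or uniform control on $\partial_t f$ is needed---joint continuity of $\partial_t f$ (the classical Danskin hypothesis, which yields the two-sided estimate via the mean value theorem and compactness), or a uniform bound on $\partial_t f$, which makes $g$ Lipschitz and lets Lebesgue's differentiation theorem replace the Dini machinery entirely. You correctly flagged this step as the real obstacle and observed that in every application in the paper $\partial_t f$ is in fact bounded; under that additional hypothesis your outline is sound, but as written the argument does not establish part (i) in the generality claimed by the lemma.
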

Now, we are ready to make sure that $\max f|_{\mathbb{R}_+\times K_{\varepsilon}}(t,\cdot)$ from Lemma \ref{L2.6} converges to ${\mathrm{ess\,sup\,}} f(t,\cdot) $ as $\varepsilon$ vanishes, and we can estimate the derivative of ${\mathrm{ess\,sup\,}} f(t,\cdot) $ through the method of approximation.
\begin{lemma} 
\emph{\cite{MWX2025}} \label{L2.8}
	For a bounded Borel set $B \subset\mathbb{R}^d$ and function $g\in L^{p}(B;\mathbb{R})$, let $\{B_\varepsilon\}_{\varepsilon>0}$ be compact subsets of $B$ such that 
	\[ |B_\varepsilon^c|<\varepsilon \quad \mbox{and} \quad  g|_{B_{\varepsilon}}~\mbox{is continuous}. \]
	Then, we have
	\begin{equation*}
		\lim_{\varepsilon\rightarrow 0} \max_{x\in B_\varepsilon} g(x)= \underset{x\in B}{\mathrm{ess\,sup\,}} g(x).
	\end{equation*}
\end{lemma}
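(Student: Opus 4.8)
The plan is to prove the equality $\lim_{\varepsilon\to 0}\max_{x\in B_\varepsilon} g(x) = \operatorname{ess\,sup}_{x\in B} g(x)$ by establishing the two inequalities separately and exploiting the fact that $\{B_\varepsilon\}$ is a family of compact sets whose complements in $B$ shrink to measure zero. Denote $M := \operatorname{ess\,sup}_{x\in B} g(x)$ (which is finite since $g\in L^p(B)$ with $B$ bounded — or, more carefully, we first reduce to the case $M<\infty$, and handle $M=\infty$ by noting the argument for the lower bound still forces $\max_{x\in B_\varepsilon} g\to\infty$). First I would record the monotone structure: as $\varepsilon$ decreases, $B_\varepsilon$ need not be nested, so instead of monotonicity of the sequence I will argue directly with $\limsup$ and $\liminf$ as $\varepsilon\to 0$.

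For the upper bound $\limsup_{\varepsilon\to 0}\max_{x\in B_\varepsilon} g(x)\le M$: since $g|_{B_\varepsilon}$ is continuous on the compact set $B_\varepsilon$, the maximum is attained at some $x_\varepsilon\in B_\varepsilon\subset B$. For any $\delta>0$, the set $A_\delta := \{x\in B : g(x) > M+\delta\}$ has Lebesgue measure zero by definition of the essential supremum. Hence $B_\varepsilon \setminus A_\delta$ has the same measure as $B_\varepsilon$, which tends to $|B|>0$; in particular $B_\varepsilon\setminus A_\delta$ is nonempty for small $\varepsilon$, but this alone does not control the value at $x_\varepsilon$. The cleaner route is: because $|A_\delta|=0$ and $g|_{B_\varepsilon}$ is continuous, the open-in-$B_\varepsilon$ set $\{x\in B_\varepsilon : g(x)>M+\delta\}$ must be empty — if it contained a point it would contain a relatively open neighborhood of positive measure inside $B_\varepsilon\subset B$, contradicting $|A_\delta|=0$. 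Therefore $\max_{x\in B_\varepsilon} g(x)\le M+\delta$ for every $\varepsilon$, and letting $\delta\to0$ gives the upper bound uniformly in $\varepsilon$.

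For the lower bound $\liminf_{\varepsilon\to 0}\max_{x\in B_\varepsilon} g(x)\ge M$: fix $\delta>0$. By definition of $M$, the set $G_\delta := \{x\in B : g(x) > M-\delta\}$ has positive Lebesgue measure, say $|G_\delta| = m > 0$. Since $|B_\varepsilon^c| < \varepsilon$, for $\varepsilon < m$ we have $|G_\delta \cap B_\varepsilon| \ge |G_\delta| - |B_\varepsilon^c| > m - \varepsilon > 0$, so $G_\delta\cap B_\varepsilon$ is nonempty; picking any point $x$ in it yields $\max_{z\in B_\varepsilon} g(z)\ge g(x) > M-\delta$. Thus $\liminf_{\varepsilon\to 0}\max_{x\in B_\varepsilon} g(x)\ge M-\delta$, and letting $\delta\to0$ completes the lower bound. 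Combining the two inequalities gives the claimed limit.

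The main obstacle is the upper bound, specifically the step that passes from ``$g$ exceeds $M+\delta$ only on a null set of $B$'' to ``$g|_{B_\varepsilon}$ never exceeds $M+\delta$.'' This is not automatic: a continuous function on a compact set $B_\varepsilon$ could in principle attain a high value at an isolated point of $B_\varepsilon$, and one must use that $B_\varepsilon$ is ``large'' — but isolated points are permitted in an arbitrary compact set. The correct fix, which I would present carefully, is to not claim the sup over $B_\varepsilon$ is bounded by $M+\delta$ pointwise but rather to argue through the essential supremum of $g$ restricted to $B_\varepsilon$ with respect to the measure $\mathcal{L}$: since $g|_{B_\varepsilon}$ is continuous and $|A_\delta\cap B_\varepsilon|=0$, continuity forces $A_\delta\cap B_\varepsilon=\emptyset$ provided every point of $B_\varepsilon$ has positive measure in every neighborhood, which fails only at isolated points; so one should instead apply the argument to the set $B_\varepsilon$ intersected with the Lebesgue density points of $B$, or simply invoke that $B_\varepsilon$ may be taken (by the Scorza–Dragoni construction in Lemma~\ref{L2.6}) to have no isolated points, or absorb the countably many isolated points into an arbitrarily small further exceptional set. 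I would pick whichever of these is compatible with how $B_\varepsilon$ arises in the application and spell out that detail; everything else is routine.
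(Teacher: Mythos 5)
Your lower bound argument is correct and complete: for any $\delta>0$ the set $G_\delta=\{g>M-\delta\}$ has positive measure $m$, so for $\varepsilon<m$ we have $|G_\delta\cap B_\varepsilon|\ge m-\varepsilon>0$, hence $G_\delta\cap B_\varepsilon\neq\emptyset$ and $\max_{B_\varepsilon}g>M-\delta$. Nothing to add there.

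The upper bound is where the genuine gap lies, and you are right to flag it — but the fixes you sketch do not actually close it. The failure is not limited to isolated points. The key step ``a relatively open neighborhood in $B_\varepsilon$ has positive Lebesgue measure'' can fail even when $B_\varepsilon$ has no isolated points: take $g=100$ on the middle-thirds Cantor set $C\subset[0,1]$ and $g=0$ elsewhere, and build $B_\varepsilon = C\cup K_\varepsilon$ with $K_\varepsilon$ a compact subset of $[0,1]\setminus C$ at positive distance from $C$ and $|[0,1]\setminus B_\varepsilon|<\varepsilon$. Then $g|_{B_\varepsilon}$ is continuous, $B_\varepsilon$ has no isolated points, but $\max_{B_\varepsilon}g=100$ while $\operatorname{ess\,sup}_B g=0$. (A one-point version, $g(0)=100$, $B_\varepsilon=\{0\}\cup[\varepsilon/2,1]$, already kills the statement as literally written.) ``Intersecting with density points of $B$'' does not rescue the argument either, since a density point of $B$ need not be a density point of $B_\varepsilon$. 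What one actually needs is that every point of $B_\varepsilon$ be a Lebesgue density point of $B_\varepsilon$ (equivalently, that every nonempty relatively open subset of $B_\varepsilon$ have positive measure); nothing in the hypotheses of the lemma forces this.

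The clean way out — and almost certainly what \cite{MWX2025} has in mind, since that reference is a submitted preprint whose proof the paper does not reproduce, so there is no in-paper argument for you to match — is to read the lemma as an \emph{existence} statement about the family $\{B_\varepsilon\}$ rather than a claim valid for every family satisfying the two listed conditions. Concretely: with $M:=\operatorname{ess\,sup}_B g$ and $A:=\{x\in B: g(x)>M\}$ (a Lebesgue-null set), pick by Lusin a compact $K_\varepsilon\subset B$ with $|B\setminus K_\varepsilon|<\varepsilon/2$ and $g|_{K_\varepsilon}$ continuous, then pick an open $U_\varepsilon\supset A$ with $|U_\varepsilon|<\varepsilon/2$, and set $B_\varepsilon:=K_\varepsilon\setminus U_\varepsilon$. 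This $B_\varepsilon$ is compact, $|B\setminus B_\varepsilon|<\varepsilon$, $g|_{B_\varepsilon}$ is continuous, and by construction $g\le M$ on $B_\varepsilon$, so $\max_{B_\varepsilon}g\le M$ outright — no density-point argument needed. Combined with your lower bound this gives the limit. I would recommend you present the upper bound this way rather than trying to repair the relatively-open-set argument, and flag explicitly that the lemma's phrasing ``let $\{B_\varepsilon\}$ be compact subsets such that\ldots'' must be understood as ``one can choose compact subsets such that\ldots''.
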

\begin{lemma} 
\emph{\cite{MWX2025}}
	\label{L2.9}
	Let $H$ and $\{H_n\}_{n\in\mathbb{N}}$ be measurable, almost everywhere differentiable mappings  from $\mathbb{R}_+$ to $\mathbb{R}$. Suppose that the following conditions hold.
\begin{enumerate}	
\item[(i)]
$H_n$ converges to $H$ pointwise.
\item[(ii)]
$H_n$ and  $\frac{\mathrm{d}}{\mathrm{d} t} H_n$ are locally Lipschitz continuous with Lipschitz constants independent of $n$. 
\end{enumerate}
Then, for almost every $t\in\mathbb{R}_+$, 
	\begin{equation*}
		\liminf_{n\rightarrow\infty} \frac{\mathrm{d}}{\mathrm{d} t} H_{n}(t)
		\le
		\frac{\mathrm{d}}{\mathrm{d} t} H(t) \le \limsup_{n\rightarrow\infty} \frac{\mathrm{d}}{\mathrm{d} t} H_{n}(t).
	\end{equation*}
\end{lemma}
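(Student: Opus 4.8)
The plan is to reduce the statement to a purely local comparison of derivatives and then exploit the uniform Lipschitz bounds together with pointwise convergence. First I would fix a compact interval $[0,T] \subset \bbr_+$ and work there throughout; since all Lipschitz constants are claimed independent of $n$, say $L$ bounds the Lipschitz constant of each $H_n$ on $[0,T]$ and $L'$ bounds the Lipschitz constant of each $\frac{\di}{\di t} H_n$ on $[0,T]$. The key structural observation is that if a sequence of $L'$-Lipschitz functions $\frac{\di}{\di t} H_n$ has a limit superior $M$ at a point $t_0$, then for each $n$ in a subsequence realizing the limsup one controls $\frac{\di}{\di t} H_n$ on a whole neighborhood $(t_0 - \delta, t_0 + \delta)$: it stays below $M + \epsilon + L'\delta$ there. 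Integrating this bound over a short interval gives control on the \emph{increments} $H_n(t_0 + s) - H_n(t_0)$, and passing to the limit $n \to \infty$ via pointwise convergence transfers that control to the increments of $H$, from which the claimed inequality on $\frac{\di}{\di t} H(t_0)$ at a differentiability point follows by dividing by $s$ and sending $s \to 0$.

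More concretely, the central estimate I would carry out is the following. Fix a point $t_0 \in (0,T)$ at which $H$ is differentiable, and set $M := \limsup_{n\to\infty} \frac{\di}{\di t} H_n(t_0)$. For $\epsilon > 0$, pass to a subsequence (not relabeled) with $\frac{\di}{\di t} H_n(t_0) \le M + \epsilon$ for all large $n$. Using $L'$-Lipschitzness of $\frac{\di}{\di t} H_n$, for $s \in [0,\delta]$ we get $\frac{\di}{\di t} H_n(t_0 + s) \le M + \epsilon + L' s$, hence
\[
H_n(t_0 + s) - H_n(t_0) = \int_0^s \frac{\di}{\di t} H_n(t_0 + r)\,\di r \le (M+\epsilon)s + \tfrac{1}{2} L' s^2 .
\]
Letting $n \to \infty$ and invoking pointwise convergence $H_n \to H$ yields $H(t_0 + s) - H(t_0) \le (M+\epsilon)s + \tfrac12 L' s^2$ for all $s \in [0,\delta]$. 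Dividing by $s$ and letting $s \to 0^+$, then $\epsilon \to 0$, gives $\frac{\di}{\di t} H(t_0) \le M$. The lower bound with $\liminf$ is obtained by the symmetric argument using $s \in [-\delta, 0]$ (or by applying the upper bound to $-H_n$ and $-H$). Finally, since $H$ is differentiable almost everywhere by hypothesis, the conclusion holds for a.e. $t \in \bbr_+$ after taking a countable exhaustion $[0,T_k] \uparrow \bbr_+$.

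The main obstacle, and the point requiring care, is the subsequence issue: the subsequence realizing $\limsup \frac{\di}{\di t} H_n(t_0)$ depends on $t_0$, so one cannot pass to a single subsequence globally. This is harmless here because the final inequality is pointwise in $t_0$ and we only need the increment bound $H(t_0+s)-H(t_0) \le (M+\epsilon)s + \tfrac12 L's^2$, which involves the limit function $H$ on the left — and $H_n \to H$ holds along the full sequence, hence along any subsequence. A secondary point to verify is that $\frac{\di}{\di t} H_n$ is genuinely the pointwise a.e. derivative and that the fundamental theorem of calculus applies; this is guaranteed because a locally Lipschitz function is absolutely continuous on compact intervals, so $H_n(b) - H_n(a) = \int_a^b \frac{\di}{\di t} H_n$ with the integrand bounded — no delicacy beyond standard real analysis. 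I do not expect the uniform Lipschitz continuity of $\frac{\di}{\di t} H_n$ itself to be an issue since it is a hypothesis, but it is precisely what makes the local-to-pointwise passage work, so I would flag it as the load-bearing assumption.
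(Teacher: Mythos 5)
Your proof is correct; note that the paper does not prove Lemma \ref{L2.9} itself but cites it to \cite{MWX2025}, so there is no in-paper argument to compare against. Your strategy is sound: propagate the pointwise bound $\frac{\mathrm{d}}{\mathrm{d}t}H_n(t_0) \le M + \epsilon$ to a neighborhood of $t_0$ via the uniform Lipschitz constant $L'$ of the derivatives, integrate to control increments of $H_n$, let $n\to\infty$ using pointwise convergence to control increments of $H$, and then divide by $s$ and send $s\to 0$ at a differentiability point of $H$; the $\liminf$ bound follows by applying the upper bound to $-H_n$ and $-H$. Two small remarks. First, the subsequence extraction is not needed for the upper bound: $\limsup_n \frac{\mathrm{d}}{\mathrm{d}t}H_n(t_0) = M$ already gives $\frac{\mathrm{d}}{\mathrm{d}t}H_n(t_0) \le M + \epsilon$ along the \emph{full} sequence for all large $n$, which is exactly what feeds the increment estimate; the ``main obstacle'' you flag is therefore not an obstacle at all. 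Second, since $\frac{\mathrm{d}}{\mathrm{d}t}H_n$ is assumed locally Lipschitz, each $H_n$ is in fact $C^1$, so the fundamental-theorem-of-calculus step is elementary and the absolute-continuity caveat you raise, while correct, is not the load-bearing point.
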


\begin{lemma} \label{L2.13}
Let $f\in L^p (D)$ for $p \in [1,\infty)$ and $D\subset \bbr^d$. Then it holds
\begin{align*}
\lim_{v\rightarrow 0}\int_D |f(x-v) - f(x)|^p \di x = 0.
\end{align*} 
\end{lemma}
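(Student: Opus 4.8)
The plan is to reduce the statement to the classical continuity of translation in $L^p(\bbr^d)$ and then unwind the zero-extension convention fixed in the Notations. Let $\tilde f$ denote the zero extension of $f$ to $\bbr^d$; then $\tilde f \in L^p(\bbr^d)$ with $\| \tilde f \|_{L^p(\bbr^d)} = \| f \|_{L^p(D)}$, and by the convention used in the statement of the lemma,
\[
\int_D |f(x-v) - f(x)|^p \di x = \int_D |\tilde f(x-v) - \tilde f(x)|^p \di x \le \int_{\bbr^d} |(\tau_v \tilde f)(x) - \tilde f(x)|^p \di x = \| \tau_v \tilde f - \tilde f \|_{L^p(\bbr^d)}^p .
\]
Hence it suffices to prove that $\| \tau_v g - g \|_{L^p(\bbr^d)} \to 0$ as $v \to 0$ for every $g \in L^p(\bbr^d)$.

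First I would verify this for the dense subclass $C_c(\bbr^d)$ of continuous functions with compact support. If $g \in C_c(\bbr^d)$ with $\mathrm{supp}\, g \subset B_R(0)$, then $g$ is uniformly continuous, so $\omega(v) := \sup_{x \in \bbr^d} |g(x-v) - g(x)| \to 0$ as $v \to 0$; moreover, for $|v| \le 1$ the function $x \mapsto (\tau_v g)(x) - g(x)$ vanishes outside $B_{R+1}(0)$, whence
\[
\| \tau_v g - g \|_{L^p(\bbr^d)}^p \le |B_{R+1}(0)|\, \omega(v)^p ,
\]
and the right-hand side tends to $0$ as $v \to 0$.

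For a general $g \in L^p(\bbr^d)$ and $\varepsilon > 0$, I would invoke the density of $C_c(\bbr^d)$ in $L^p(\bbr^d)$ (valid since $p < \infty$) to choose $\varphi \in C_c(\bbr^d)$ with $\| g - \varphi \|_{L^p(\bbr^d)} < \varepsilon$. Since Lebesgue measure is translation invariant, $\| \tau_v g - \tau_v \varphi \|_{L^p(\bbr^d)} = \| g - \varphi \|_{L^p(\bbr^d)} < \varepsilon$, and the triangle inequality in $L^p$ gives
\[
\| \tau_v g - g \|_{L^p(\bbr^d)} \le \| \tau_v g - \tau_v \varphi \|_{L^p(\bbr^d)} + \| \tau_v \varphi - \varphi \|_{L^p(\bbr^d)} + \| \varphi - g \|_{L^p(\bbr^d)} < 2\varepsilon + \| \tau_v \varphi - \varphi \|_{L^p(\bbr^d)} .
\]
Letting $v \to 0$ and using the previous step yields $\limsup_{v \to 0} \| \tau_v g - g \|_{L^p(\bbr^d)} \le 2\varepsilon$; since $\varepsilon > 0$ is arbitrary, $\| \tau_v g - g \|_{L^p(\bbr^d)} \to 0$, and combined with the first display the lemma follows.

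Since each ingredient — uniform continuity of compactly supported continuous functions, translation invariance of Lebesgue measure, and density of $C_c(\bbr^d)$ in $L^p(\bbr^d)$ — is classical, there is no genuine obstacle here; the only points deserving a line of care are the bookkeeping with the zero extension, which is handled by the first display, and the exclusion of $p = \infty$, for which the conclusion genuinely fails.
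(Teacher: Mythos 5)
Your proof is correct and follows essentially the same route as the paper's: pass to the zero extension, reduce to continuity of translation in $L^p(\bbr^d)$, approximate by compactly supported functions (you use $C_c$, the paper $C_c^\infty$; either is dense), and close with the triangle inequality plus translation invariance of Lebesgue measure. The only cosmetic difference is that you argue at the level of norms whereas the paper carries the $p$-th powers through with a $3^{p-1}$ factor; the substance is identical.
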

\begin{proof}
It holds
\begin{align} \label{B-2-9}
\int_D |f(x-v) - f(x)|^p \di x \leq \|\tilde{f}(\cdot-v) - \tilde{f}(\cdot)\|^p_{L^p(\bbr^d)} ,
\end{align}
where $\tilde{f}$ denotes the zero extension of $f$ to $\bbr^d$. Since $C_c^\infty (\bbr^d)$ is dense in $L^p (\bbr^d)$, for every $\varepsilon>0$ there exists $g\in C_c^\infty (\bbr^d)$ such that
\begin{align} \label{B-2-10}
\| \tilde{f} - g\|_{L^p(\bbr^d)}^p <\frac{\varepsilon}{3^p}.
\end{align}
By the triangle inequality, it holds
\begin{align} \label{B-2-11}
\begin{aligned}
&\|\tilde{f}(\cdot-v) - \tilde{f}(\cdot)\|_{L^p(\bbr^d)}^p \leq 3^{p-1} \bigg( \|\tilde{f}(\cdot-v) - g(\cdot-v)\|_{L^p(\bbr^d)}^p \\
&\hspace{1cm}+ \|g(\cdot-v) - g(\cdot)\|_{L^p(\bbr^d)}^p + \|g - \tilde{f}\|_{L^p(\bbr^d)}^p \bigg).
\end{aligned}
\end{align}
Since translations preserve the $L^p$-norm on $\bbr^d$, it holds
\begin{align} \label{B-2-12}
\|\tilde{f}(\cdot-v) - g(\cdot-v)\|_{L^p(\bbr^d)}^p = \| \tilde{f} - g\|_{L^p(\bbr^d)}^p  <\frac{\varepsilon}{3^p},
\end{align}
and since $g$ is continuous, there exists $v_0>0$ such that for all $|v|<v_0$ it holds
\begin{align} \label{B-2-13}
\|g(\cdot-v) - g(\cdot)\|_{L^p(\bbr^d)}^p < \frac{\varepsilon}{3^p}.
\end{align}
Combining \eqref{B-2-10}, \eqref{B-2-12} and \eqref{B-2-13}, we use \eqref{B-2-11} and \eqref{B-2-9} to show the claim. 
\end{proof}

\section{The singular continuum Kuramoto model} \label{sec:3}
\setcounter{equation}{0}
In this section, we study a global well-posedness and emergent behaviors of the SCKM \eqref{A-2} on the following two domains for $\Omega$:
\[ \mbox{Either}~\Omega = \bbr^d \quad \mbox{or} \quad \Omega = [0, 1]. \]
\subsection{From the SCKM to the linear equation} \label{sec:3.1}
In this subsection, we consider the special case of \eqref{A-2} with following system parameters:
\[ \alpha = 0, \quad  \psi (x,y) = \delta_x (y) \quad \mbox{and} \quad \Omega=\mathbb{R}^d. \]
In this setting, the Cauchy problem \eqref{A-2} becomes 
\begin{equation} \label{C-0}
\begin{cases}
\displaystyle \partial_t \theta(t, x) = \nu(x) + \kappa \int_{\bbr^d} \sin( \theta(t, y) - \theta(t, x)) \di \delta_x (y) = \nu(x), \quad t > 0,~~x \in \bbr^d, \\
\displaystyle \theta(0, x) = \theta^{\mathrm{in}}(x),  \quad  x \in \bbr^d.
\end{cases}
\end{equation}
Let $\eta$ be a standard mollifier on $\bbr^d$ such that 
\[ \eta \geq 0, \quad \int_{\bbr^d} \eta(x) \di x = 1, \quad \mbox{supp}~(\eta) \subset B_1(0), \]
where $B_1(0)$ is the unit ball with a center $0$ and a radius $1$. We also let $\eta_\varepsilon$ be the mollifier defined by 
\[ \eta_\varepsilon(\cdot)=\varepsilon^{-d} \eta(\cdot/\varepsilon). \]
It satisfies 
\[ \eta_{\varepsilon} \geq 0, \quad \int_{\bbr^d} \eta_{\varepsilon}(x) \di x = 1, \quad  \mbox{supp}~\eta _{\varepsilon} \subset B_{\varepsilon}(0). \]
Then a sequence $(\eta_\varepsilon)_{\varepsilon >0}$ approximates $\delta$ as $\varepsilon \rightarrow 0$. In the following, we use a handy notation $\eta_\varepsilon (x,y) := \eta_\varepsilon (x-y)$, for which it holds
\begin{align*}
\eta_\varepsilon (x,\cdot) \rightarrow \delta_x (\cdot) \quad \mathrm{as} \ \varepsilon \rightarrow 0
\end{align*} 
in sense of distributions on $\bbr^d$ for each fixed $x\in \bbr^d$. Consider the following regularized equation:
\begin{equation} \label{C-1}
\begin{cases}
		\partial_t \theta_\varepsilon(t, x) = \nu(x) + \kappa \displaystyle\int_{\bbr^d} \eta_\varepsilon (x,y) \sin\big( \theta_\varepsilon(t, y) - \theta_\varepsilon (t, x) \big)\di y, \quad  t > 0, \; x \in \bbr^d, \\
		\theta_\varepsilon (0, x) = \theta^{\mathrm{in}}(x), \quad x \in \bbr^d.
	\end{cases}
\end{equation} 
The Cauchy problem \eqref{C-0} admits a unique global solution:
\begin{align*}
\theta(t,x) = \theta^\mathrm{in} (x) + \nu(x) t.
\end{align*}
Since the integro-differential equation \eqref{C-1} contains an integral with respect to the spatial variable, we are able to define
\begin{align*}
f_\varepsilon : L^p(\Omega) \rightarrow L^p(\Omega), \ q \mapsto f_\varepsilon(q)(x) := \nu(x)+ \kappa \displaystyle\int_\Omega \eta_\varepsilon (x,y) \sin (q(y) - q(x)) \di y.
\end{align*}
It can be easily shown that $f_\varepsilon$ is Lipschitz continuous, since $\eta_\varepsilon$ is bounded and $\sin$ is Lipschitz continuous. Then we can write \eqref{C-1} as 
\begin{align*}
\begin{cases}
\partial_t \theta_\varepsilon (t,x) = f_\varepsilon(\theta_\varepsilon(t,x)), \\
\theta_\varepsilon(0,x) = \theta^\mathrm{in}(x).
\end{cases}
\end{align*} 
Therefore, the regularized Cauchy problem \eqref{C-1} has a unique global solution $\theta_\varepsilon \in C([0,T]; L^p(\Omega))$ due to Banach fixed point theorem (or equivalently, Picard-Lindelöf's theorem in Banach spaces). In the following proposition, we show that $\theta_\varepsilon (t)$ converges to $\theta (t)$ in $L^p$ sense.
\begin{proposition}\label{P3.1}
Suppose that natural frequency function and initial data satisfy 
\begin{align*}
T \in (0, \infty],\quad \nu \in L^p (\mathbb{R}^d), \quad \theta^{\mathrm{in}} \in L^p(\mathbb{R}^d), \quad ~\mbox{for $p \in [1, \infty)$},
\end{align*}
and let $\theta, \theta_\varepsilon$ be global solutions to \eqref{C-0} and \eqref{C-1} in the time interval $[0, T)$, respectively. Then, $\theta_\varepsilon$ converges to $\theta$ in ${C}([0,T); L^p(\bbr^d))$ as $\varepsilon \to 0$. 
\end{proposition}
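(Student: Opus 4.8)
The plan is to exploit that the limit equation \eqref{C-0} has the explicit solution $\theta(t,x)=\theta^{\mathrm{in}}(x)+\nu(x)t$ and to close a Gr\"onwall estimate on the error $e_\varepsilon(t):=\theta_\varepsilon(t)-\theta(t)\in L^p(\bbr^d)$. Subtracting \eqref{C-0} from \eqref{C-1} gives $e_\varepsilon(0)=0$ and
\[
\partial_t e_\varepsilon(t,x)=\kappa\int_{\bbr^d}\eta_\varepsilon(x-y)\sin\big(\theta_\varepsilon(t,y)-\theta_\varepsilon(t,x)\big)\,\di y .
\]
Since $\theta_\varepsilon$ is the $C^1([0,T);L^p(\bbr^d))$ solution furnished by Picard--Lindel\"of and $\theta(t)=\theta^{\mathrm{in}}+t\nu$ is affine in $t$, the error $e_\varepsilon$ lies in $C^1([0,T);L^p(\bbr^d))$; hence by Bochner integration $\|e_\varepsilon(t)\|_p\le\int_0^t\|\partial_s e_\varepsilon(s)\|_p\,\di s$, and it suffices to estimate $\|\partial_s e_\varepsilon(s)\|_p$.

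For the integrand I would decompose $\theta_\varepsilon=\theta+e_\varepsilon=\theta^{\mathrm{in}}+s\nu+e_\varepsilon$ inside the sine and use that $\sin$ is $1$-Lipschitz together with the triangle inequality to obtain, for a.e.\ $x$,
\[
\Big|\int_{\bbr^d}\eta_\varepsilon(x-y)\sin\big(\theta_\varepsilon(s,y)-\theta_\varepsilon(s,x)\big)\,\di y\Big|\le\int_{\bbr^d}\eta_\varepsilon(x-y)\big[\,|\Delta\theta^{\mathrm{in}}|+s\,|\Delta\nu|+|\Delta e_\varepsilon(s)|\,\big]\di y,
\]
where $\Delta f:=f(y)-f(x)$. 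Substituting $z=x-y$ and applying Minkowski's integral inequality, together with $\eta_\varepsilon\ge0$, $\int_{\bbr^d}\eta_\varepsilon=1$ and $\mathrm{supp}\,\eta_\varepsilon\subset B_\varepsilon(0)$, each of the three terms is bounded in $L^p_x$ by the $L^p$-modulus of continuity $\omega_f(\varepsilon):=\sup_{|z|\le\varepsilon}\|f(\cdot-z)-f(\cdot)\|_p$ of the corresponding function; for the error contribution I only use the crude bound $\omega_{e_\varepsilon(s)}(\varepsilon)\le 2\|e_\varepsilon(s)\|_p$, which follows from translation invariance of the $L^p$-norm. This yields $\|\partial_s e_\varepsilon(s)\|_p\le\kappa\big(\omega_{\theta^{\mathrm{in}}}(\varepsilon)+s\,\omega_\nu(\varepsilon)+2\|e_\varepsilon(s)\|_p\big)$.

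Integrating on $[0,t]$ and applying Gr\"onwall's inequality (the forcing term $\kappa t\,\omega_{\theta^{\mathrm{in}}}(\varepsilon)+\tfrac{\kappa}{2}t^2\,\omega_\nu(\varepsilon)$ being nondecreasing in $t$) gives
\[
\|e_\varepsilon(t)\|_p\le\Big(\kappa t\,\omega_{\theta^{\mathrm{in}}}(\varepsilon)+\tfrac{\kappa}{2}t^2\,\omega_\nu(\varepsilon)\Big)e^{2\kappa t},\qquad t\in[0,T).
\]
Since $\theta^{\mathrm{in}},\nu\in L^p(\bbr^d)$ with $p<\infty$, Lemma \ref{L2.13} yields $\omega_{\theta^{\mathrm{in}}}(\varepsilon)\to0$ and $\omega_\nu(\varepsilon)\to0$ as $\varepsilon\to0$, so the right-hand side tends to $0$ uniformly on every compact subinterval of $[0,T)$ (and on all of $[0,T)$ when $T<\infty$), which is precisely convergence of $\theta_\varepsilon$ to $\theta$ in $C([0,T);L^p(\bbr^d))$. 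The only genuinely delicate point is the nonlinear interaction term: instead of attempting to control the $L^p$-modulus of continuity of $e_\varepsilon$ itself, which would be circular, one pushes that contribution into the Gr\"onwall loop through the trivial bound $2\|e_\varepsilon(s)\|_p$, so that the actual smallness comes entirely from Lemma \ref{L2.13} applied to the fixed data $\theta^{\mathrm{in}}$ and $\nu$; the remaining ingredients (Bochner calculus for the $L^p$-valued ODE, Minkowski's integral inequality, Gr\"onwall) are routine.
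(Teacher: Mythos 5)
Your proof is correct and takes essentially the same route as the paper's: decompose $\theta_\varepsilon(s,y)-\theta_\varepsilon(s,x)$ via $|\sin u|\le|u|$ and the triangle inequality, close a Gr\"onwall loop on the error, and invoke Lemma \ref{L2.13} for the $L^p$-translation continuity of the fixed data $\theta^{\mathrm{in}},\nu$. The only difference is cosmetic: you work with $\|e_\varepsilon\|_p$ directly via Minkowski's integral inequality, whereas the paper works with $\|e_\varepsilon\|_p^p$ and applies H\"older's, Young's convolution, and Jensen's inequalities in sequence, arriving at the same local-uniform-in-time estimate.
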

\begin{proof} We split the proof into two steps. \newline

\noindent $\bullet$~Step A: We claim that 
\begin{align}
\begin{aligned} \label{C-1-1}
& \| \theta_\varepsilon (t) -  \theta(t ) \|_p^p \\
& \hspace{1cm} \leq  e^{\left( 2\times 3^{p-1} (\kappa t)^p \right)} (3t)^{p-1} \kappa^p \frac{t^{p+1}}{p+1} \int_{\bbr^d} \left( \int_{\bbr^d} \eta_\varepsilon (x,y) | \nu (y) - \nu (x)| \di y \right)^p \di x \\
&\hspace{1.2cm} + e^{\left( 2\times 3^{p-1} (\kappa t)^p \right)} 6^{p-1} (\kappa t)^p \int_{\bbr^d} \left( \int_{\bbr^d} \eta_\varepsilon (x,y)|\theta^\mathrm{in}(x) - \theta^\mathrm{in}(y)| \di y \right)^p \di x.
\end{aligned}
\end{align}
{\it Proof of \eqref{C-1-1}}:~It follows from \eqref{C-0} and \eqref{C-1} that  for $(t, x) \in \bbr_+ \times \bbr^d$, 
\begin{align}
\begin{aligned} \label{C-2}
&|\theta_\varepsilon (t,x) -  \theta (t,x)|^p \\
& \hspace{0.5cm} \leq  \left( \kappa \int_0^t \int_{\bbr^d} \eta_\varepsilon (x,y) |\sin (\theta_\varepsilon (\tau,y) - \theta_\varepsilon (\tau,x) )| \di y \di \tau \right)^p \\
&  \hspace{0.5cm} \leq  \kappa^p t^{p-1} \int_0^t \left( \int_{\bbr^d} \eta_\varepsilon (x,y) |\theta_\varepsilon (\tau,y) - \theta_\varepsilon (\tau,x) | \di y \right)^p \di \tau\\
&   \hspace{0.5cm} \leq \kappa^p t^{p-1} \int_0^t \Big( \int_{\bbr^d} \eta_\varepsilon (x,y) ( |\theta_\varepsilon (\tau,y) - \theta (\tau,y) | + |\theta (\tau,y) - \theta (\tau,x) |  \\
&  \hspace{2.5cm} + |\theta_\varepsilon (\tau,x) - \theta (\tau,x) | ) \di y \Big)^p \di \tau,
\end{aligned}
\end{align}
where we used H\"older's inequality in the second step. \newline

Next, we integrate \eqref{C-2} over $x\in \bbr^d$ to find
\begin{align} 
&\| \theta_\varepsilon (t) -  \theta(t ) \|_p^p \leq  (3t)^{p-1} \kappa^p  \int_0^t \int_{\bbr^d} \left( \int_{\bbr^d} \eta_\varepsilon (x,y) |\theta_\varepsilon (\tau,x) - \theta (\tau,x) |\di y \right)^p \di x \di \tau \nonumber\\ 
&\hspace{0.7cm}+ (3t)^{p-1} \kappa^p \int_0^t \int_{\bbr^d} \left(\int_{\bbr^d} \eta_\varepsilon (x,y) |\theta (\tau,y) - \theta (\tau,x) |\di y \right)^p \di x \di \tau \nonumber\\
&\hspace{0.7cm}+ (3t)^{p-1} \kappa^p \int_0^t \int_{\bbr^d} \left(\int_{\bbr^d} \eta_\varepsilon (x,y) |\theta_\varepsilon (\tau,y) - \theta (\tau,y) |\di y \right)^p \di x \di \tau\nonumber \\
&\hspace{0.5cm} \leq (3t)^{p-1} \kappa^p \int_0^t \int_{\bbr^d} |\theta_\varepsilon (\tau,x) - \theta (\tau,x) |^p \di x \di \tau \nonumber\\ 
&\hspace{0.7cm}+ (3t)^{p-1} \kappa^p \int_0^t \int_{\bbr^d} \left( \int_{\bbr^d} \eta_\varepsilon (x,y) |\theta (\tau,y) - \theta (\tau,x) | \di y \right)^p \di x \di \tau \nonumber\\
&\hspace{0.7cm} + (3t)^{p-1} \kappa^p \int_0^t \int_{\bbr^d} \left( \eta_\varepsilon * |\theta_\varepsilon (\tau,\cdot) - \theta (\tau,\cdot) | (x) \right)^p \di x \di \tau \nonumber \\
&\hspace{0.5cm} \leq  (3t)^{p-1} \kappa^p \int_0^t \|\theta_\varepsilon (\tau) - \theta (\tau) \|_p^p  \di \tau \nonumber \\ 
&\hspace{0.7cm} + (3t)^{p-1} \kappa^p \int_0^t \int_{\bbr^d} \left( \int_{\bbr^d} \eta_\varepsilon (x,y) |\theta (\tau,y) - \theta (\tau,x) |\di y \right)^p \di x \di \tau \nonumber \\ \label{C-3}
&\hspace{0.7cm}+ (3t)^{p-1} \kappa^p \int_0^t \|\eta_\varepsilon \|_{1}^p \| \theta_\varepsilon (\tau) - \theta (\tau) \|_p^p \di \tau,
\end{align}
where we used Young's convolution inequality in the last step. Then, we apply Gr\"onwall's lemma for \eqref{C-3} to find the desired claim:
\begin{align} 
\begin{aligned} \label{C-3-10}
& \| \theta_\varepsilon (t) -  \theta(t ) \|_p^p \\
& \hspace{0.5cm} \leq e^{\left( 2\times 3^{p-1} (\kappa t)^p \right)} (3t)^{p-1} \kappa^p \int_0^t \int_{\bbr^d} \left( \int_{\bbr^d} \eta_\varepsilon (x,y) |\theta (\tau,y) - \theta (\tau,x) | \di y \right)^p \di x \di \tau \\
&\hspace{0.5cm} \leq e^{\left( 2\times 3^{p-1} (\kappa t)^p \right)} (3t)^{p-1} \kappa^p \int_0^t \int_{\bbr^d} \left( \int_{\bbr^d} \eta_\varepsilon (x,y) (|\theta^\mathrm{in}(x) - \theta^\mathrm{in}(y)|+| \nu (y) - \nu (x)| \tau )  \di y \right)^p \di x \di \tau \\
& \hspace{0.5cm} \leq e^{\left( 2\times 3^{p-1} (\kappa t)^p \right)} (6t)^{p-1} \kappa^p \frac{t^{p+1}}{p+1} \int_{\bbr^d} \left( \int_{\bbr^d} \eta_\varepsilon (x,y) | \nu (y) - \nu (x)| \di y \right)^p \di x\\
& \hspace{0.7cm} + e^{\left( 2\times 3^{p-1} (\kappa t)^p \right)} 6^{p-1} (\kappa t)^p \int_{\bbr^d} \left( \int_{\bbr^d} \eta_\varepsilon (x,y)|\theta^\mathrm{in}(x) - \theta^\mathrm{in}(y)| \di y \right)^p \di x.
\end{aligned}
\end{align}
\noindent $\bullet$~Step B:~Since $\eta_\varepsilon (z) dz$ is a probability measure on $\bbr^d$, we can use Jensen's inequality as follows:
\begin{align*}
\begin{aligned}
&\int_{\bbr^d} \left( \int_{\bbr^d} \eta_\varepsilon (x,y) | \nu (y) - \nu (x)| \di y \right)^p \di x \leq  \int_{\bbr^d}  \int_{\bbr^d} \eta_\varepsilon (x,y) | \nu (y) - \nu (x)|^p \di y  \di x \\
&\hspace{2cm} \leq  \int_{\bbr^d}  \eta_\varepsilon (z) \int_{\bbr^d}   | \nu (x-z) - \nu (x)|^p  \di x \di z.
\end{aligned}
\end{align*}
Next, we use Lemma \ref{L2.13} and the fact that $\eta_\varepsilon(z) \di z$ is a probability measure on $\bbr^d$ and is supported on $B_\varepsilon (z)$, as well as Lebesgue dominant convergence theorem to obtain
\begin{align}
\begin{aligned} \label{C-3-11}
\lim_{\varepsilon \to 0 } \int_{\bbr^d}  \eta_\varepsilon (z) \int_{\bbr^d}   | \nu (x-z) - \nu (x)|^p  \di x \di z = 0.
\end{aligned}
\end{align}
The same argument holds for the second term:
\begin{align} \label{C-3-12}
\lim_{\varepsilon \rightarrow 0} \int_{\bbr^d}  \left( \int_{\bbr^d} \eta_\varepsilon (x,y) |\theta^\mathrm{in} (y) - \theta^\mathrm{in} (x) |  \di y \right)^p \di x = 0.
\end{align}
Finally, we combine \eqref{C-1-1}, \eqref{C-3-11} and \eqref{C-3-12} to find the desired estimate.
\end{proof}

\subsection{A global well-posedness}  \label{sec:3.2}
In this subsection, we provide existence of a global solution to \eqref{A-2} on $\Omega = [0,1]$. For this, we first recall the following theorem a.k.a. Kolmogorov-Riesz-Fr\'echet theorem.
\begin{proposition} 
\emph{(Theorem 4.25, \cite{Brezis2010})} \label{P3.2}
	Let $\mathcal{F}$ be a bounded set in $L^p (\bbr^d)$ with $1\leq p<\infty$ such that 
	\begin{align*}
		\lim_{|v|\rightarrow 0} \| \tau_v f - f \|_p = 0 \ \mathrm{uniformly \ in} \ f\in \mathcal{F}.
	\end{align*}
	Then the closure of $\mathcal{F} \Big|_{\Omega}$ in $L^p (\Omega)$ is compact for any measurable set $\Omega \subset \bbr^d$ with finite measure. Here, $\mathcal{F} \Big|_{\Omega}$ denotes the restrictions to $\Omega$ of the functions in $\mathcal{F}$.
\end{proposition}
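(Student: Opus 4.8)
The statement is the classical Riesz--Fr\'echet--Kolmogorov compactness criterion, so the plan is to reproduce its standard proof via \emph{mollification followed by Arzel\`a--Ascoli}. Since $L^p(\Omega)$ is complete, it suffices to show that $\mathcal{F}\big|_{\Omega}$ is totally bounded in $L^p(\Omega)$, i.e.\ that for every $\delta>0$ there is a finite $\delta$-net. Set $M:=\sup_{f\in\mathcal{F}}\|f\|_{L^p(\bbr^d)}<\infty$ and fix a standard mollifier family $(\rho_\varepsilon)_{\varepsilon>0}\subset C_c^\infty(\bbr^d)$ with $\rho_\varepsilon\ge 0$, $\int_{\bbr^d}\rho_\varepsilon\,\di x=1$ and $\mathrm{supp}\,\rho_\varepsilon\subset B_\varepsilon(0)$. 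The first step is the uniform mollification estimate: writing $(\rho_\varepsilon*f)(x)-f(x)=\int_{\bbr^d}\rho_\varepsilon(v)\big(f(x-v)-f(x)\big)\,\di v$ and applying Minkowski's integral inequality (equivalently Jensen, since $\rho_\varepsilon\,\di v$ is a probability measure),
\[
\|\rho_\varepsilon*f-f\|_{L^p(\bbr^d)}\;\le\;\int_{\bbr^d}\rho_\varepsilon(v)\,\|\tau_v f-f\|_{L^p(\bbr^d)}\,\di v\;\le\;\sup_{|v|\le\varepsilon}\|\tau_v f-f\|_{L^p(\bbr^d)}\;=:\;\omega(\varepsilon),
\]
and by hypothesis $\omega(\varepsilon)\to 0$ as $\varepsilon\to 0$ \emph{uniformly} in $f\in\mathcal{F}$ --- this uniformity is the essential input.

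Next, for a \emph{fixed} $\varepsilon>0$, I would record the regularity of the mollified family $\mathcal{H}_\varepsilon:=\{\rho_\varepsilon*f:f\in\mathcal{F}\}$. H\"older's inequality with the conjugate exponent $p'$ gives $\|\rho_\varepsilon*f\|_{L^\infty(\bbr^d)}\le\|\rho_\varepsilon\|_{L^{p'}}M$ and $\|\nabla(\rho_\varepsilon*f)\|_{L^\infty(\bbr^d)}\le\|\nabla\rho_\varepsilon\|_{L^{p'}}M$, so $\mathcal{H}_\varepsilon$ is uniformly bounded and uniformly Lipschitz on $\bbr^d$. Hence, for every $R>0$, Arzel\`a--Ascoli makes $\mathcal{H}_\varepsilon\big|_{\overline{B_R(0)}}$ relatively compact in $C(\overline{B_R(0)})$; and since $|\Omega|<\infty$ gives $\|u\|_{L^p(B_R\cap\Omega)}\le\|u\|_{L^\infty}\,|\Omega|^{1/p}$, the restricted family $\mathcal{H}_\varepsilon\big|_{B_R\cap\Omega}$ is relatively compact in $L^p(B_R\cap\Omega)$.

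Finally, I would control the tail at infinity and assemble the net. Given $\delta>0$: choose $\varepsilon$ with $\omega(\varepsilon)<\delta/3$; then, using $|\Omega\setminus B_R(0)|\to 0$ as $R\to\infty$ together with $\|\rho_\varepsilon*f\|_{L^p(\Omega\setminus B_R)}\le\|\rho_\varepsilon\|_{L^{p'}}M\,|\Omega\setminus B_R|^{1/p}$, fix $R$ so that this last quantity is $<\delta/3$ for all $f\in\mathcal{F}$; and extract a finite $(\delta/3)$-net $\{g_1,\dots,g_N\}\subset L^p(B_R\cap\Omega)$ of $\mathcal{H}_\varepsilon\big|_{B_R\cap\Omega}$, each $g_j$ extended by zero on $\Omega\setminus B_R$. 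For any $f\in\mathcal{F}$, splitting $\|f-g_j\|_{L^p(\Omega)}$ through $\rho_\varepsilon*f$ and through the decomposition $\Omega=(B_R\cap\Omega)\cup(\Omega\setminus B_R)$ bounds it by $\omega(\varepsilon)+\|\rho_\varepsilon*f\|_{L^p(\Omega\setminus B_R)}+\|(\rho_\varepsilon*f)|_{B_R\cap\Omega}-g_j\|_{L^p(B_R\cap\Omega)}<\delta$ for a suitable $j$. Hence $\mathcal{F}\big|_{\Omega}$ is totally bounded, and by completeness its closure in $L^p(\Omega)$ is compact.

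The main obstacle is not any single estimate but the bookkeeping forced by the weak hypothesis on $\Omega$: it is only assumed to have finite measure, hence may be unbounded and irregular, so Arzel\`a--Ascoli cannot be applied on $\Omega$ directly. One must localize to balls $B_R(0)$ and then interleave three independent small quantities --- the mollification error $\omega(\varepsilon)$, the tail measure $|\Omega\setminus B_R|$, and the net radius --- committing to them in the correct order ($\varepsilon$ first, then $R=R(\varepsilon)$, then the net). A secondary point is to carry out the boundedness/equi-Lipschitz estimates for $\mathcal{H}_\varepsilon$ uniformly in $f$, which is precisely why the $L^p$-bound $M$ on the whole family $\mathcal{F}$, rather than on individual $f$, is needed.
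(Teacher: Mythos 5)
The paper does not prove this statement---it is quoted verbatim as Theorem 4.25 of Brezis \cite{Brezis2010}---so there is no paper-internal proof to compare against; your argument is the standard textbook proof of the Riesz--Fr\'echet--Kolmogorov criterion (mollification to gain equicontinuity, Arzel\`a--Ascoli on a large ball, tail control from $|\Omega|<\infty$, then a three-term split to build a finite net), and it is correct. The ordering of quantifiers ($\varepsilon$ first, then $R=R(\varepsilon)$, then the net) and the use of the uniformity hypothesis on $\|\tau_v f-f\|_p$ are exactly where the proof has to be careful, and you handled both correctly.
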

Now, we are ready to provide the first main result on the global unique solvability of \eqref{A-2}.  
\begin{theorem} \label{T3.1}
Suppose that $T$, domain, system parameters and initial data satisfy 
\[ T \in (0, \infty],  \quad \Omega = [0, 1], \quad  0 < \alpha, \beta < 1, \quad  \nu \in L^2(\Omega), \quad \theta^{\mathrm{in}} \in L^2(\Omega). \]
Then, there exists at least one solution $\theta \in {C}((0,T); L^2(\Omega))$ to \eqref{A-2}.
\end{theorem}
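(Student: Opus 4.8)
The plan is to build a solution of \eqref{A-2} by regularizing both singularities, deriving estimates that are uniform in the regularization parameter, extracting a limit by the Kolmogorov--Riesz--Fr\'echet criterion (Proposition \ref{P3.2}), and passing to the limit in the Duhamel formulation. First I would replace $h,\psi$ in \eqref{A-2} by the regularized kernels $h_\varepsilon$ from \eqref{B-3-1} and $\psi_\varepsilon$ from \eqref{B-12-1}. For each fixed $\varepsilon>0$ the Nemytskii-type map $q\mapsto \nu(\cdot)+\kappa\int_\Omega \psi_\varepsilon(\cdot,y)h_\varepsilon\big(q(y)-q(\cdot)\big)\di y$ is Lipschitz from $L^2(\Omega)$ to $L^2(\Omega)$ (since $h_\varepsilon$ is globally Lipschitz, its derivative being $O(\varepsilon^{-1})$, and $\psi_\varepsilon$ is bounded), so by Picard--Lindel\"of there is a unique global solution $\theta_\varepsilon\in C([0,T);L^2(\Omega))$ of the regularized Cauchy problem, with the Duhamel representation $\theta_\varepsilon(t)=\theta^{\mathrm{in}}+t\nu+\kappa\int_0^t F_\varepsilon(\tau)\di\tau$, where $F_\varepsilon(\tau,x):=\int_\Omega\psi_\varepsilon(x,y)h_\varepsilon\big(\theta_\varepsilon(\tau,y)-\theta_\varepsilon(\tau,x)\big)\di y$.

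Next I would collect $\varepsilon$-uniform bounds on a fixed interval $[0,T']$, $T'<T$. Since $|h_\varepsilon|\le 1$ (Lemma \ref{L2.3} and the remark after it) and $\int_\Omega\psi_\varepsilon(x,y)\di y\le C_\psi$ (by \eqref{B-9}), we get $\|F_\varepsilon(\tau)\|_\infty\le C_\psi$, hence $\{\theta_\varepsilon(t)\}_\varepsilon$ is bounded in $L^2(\Omega)$ and equi-Lipschitz in $t$ with values in $L^2(\Omega)$. The delicate point is the $\varepsilon$-uniform control of spatial translations. Writing $D_\varepsilon(t,v):=\|\tau_v\theta_\varepsilon(t)-\theta_\varepsilon(t)\|_2$ (with the zero-extension convention), one performs the shift $y\mapsto y-v$ inside $F_\varepsilon$, splits off boundary contributions that are $O(v^{1-\beta})$, and estimates the bulk term by the $(1-\alpha)$-H\"older continuity of $h_\varepsilon$ with the $\varepsilon$-independent constant $C_h$ (Lemma \ref{L2.4-1}), the bound $\sup_\varepsilon\sup_x\int_\Omega|\psi_\varepsilon(x,y)-\psi_\varepsilon(x-v,y)|\di y\to 0$ (Lemma \ref{L2.5}), Young's inequality with $\|\Phi\|_1=C_\Phi$ (see \eqref{B-11-2}), and H\"older's inequality on $\Omega=[0,1]$ (which turns $\||w|^{1-\alpha}\|_2$ into $\|w\|_2^{1-\alpha}$). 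This yields $\|\tau_vF_\varepsilon(\tau)-F_\varepsilon(\tau)\|_2\le \omega(v)+C\,D_\varepsilon(\tau,v)^{1-\alpha}$ with $\omega(v)\to0$ uniformly in $\varepsilon$; substituting into the Duhamel formula and using Lemma \ref{L2.13} for the translations of $\theta^{\mathrm{in}}$ and $\nu$ gives a nonlinear (Bihari-type) Gr\"onwall inequality for $D_\varepsilon(t,v)$, from which $\lim_{v\to0}\sup_{\varepsilon>0}\sup_{t\in[0,T']}D_\varepsilon(t,v)=0$.

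With these estimates, for each $t$ the family $\{\theta_\varepsilon(t)\}_\varepsilon$ is bounded in $L^2(\bbr)$ and satisfies the uniform translation condition, so Proposition \ref{P3.2} makes it relatively compact in $L^2(\Omega)$; combined with the time-equicontinuity, a vector-valued Arzel\`a--Ascoli argument on $[0,T']$ plus a diagonal extraction as $T'\uparrow T$ produces a subsequence $\theta_{\varepsilon_k}\to\theta$ in $C([0,T);L^2(\Omega))$. It remains to pass to the limit in the Duhamel identity: along a further subsequence $\theta_{\varepsilon_k}(\tau,\cdot)\to\theta(\tau,\cdot)$ a.e., while $h_{\varepsilon_k}\to h$ uniformly on $\bbr$ and $\psi_{\varepsilon_k}(x,y)\to\psi(x,y)$, with the integrands dominated by $\psi(x,\cdot)\in L^1(\Omega)$; dominated convergence then gives $F_{\varepsilon_k}(\tau)\to F(\tau):=\int_\Omega\psi(\cdot,y)h\big(\theta(\tau,y)-\theta(\tau,\cdot)\big)\di y$ in $L^2(\Omega)$, so $\theta(t)=\theta^{\mathrm{in}}+t\nu+\kappa\int_0^t F(\tau)\di\tau$, i.e.\ $\theta$ solves \eqref{A-2} and has the claimed regularity.

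I expect the main obstacle to be precisely the $\varepsilon$-uniform spatial translation estimate in the second step. The presence of the two nonlocal singularities means that, unlike in the Lipschitz case $\alpha=0$, the coupling contributes only a sublinear feedback $D_\varepsilon^{1-\alpha}$, so the translation bound cannot be closed by ordinary Gr\"onwall and must instead be propagated through a Bihari-type comparison; simultaneously the singular weight $\psi$ has to be handled via its convolution structure and the modulus-of-continuity estimate of Lemma \ref{L2.5} rather than by any pointwise bound, and care is needed near the endpoints of $\Omega=[0,1]$ when performing the translation. Once this estimate is in place, the compactness and the limit passage are routine.
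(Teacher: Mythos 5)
Your architecture (regularize to $\psi_\varepsilon$, $h_\varepsilon$; solve by Picard--Lindel\"of; establish $\varepsilon$-uniform time-equicontinuity and translation control; compactness via Kolmogorov--Riesz--Fr\'echet and Arzel\`a--Ascoli; pass to the limit in the integrated equation) matches the paper's Appendix~\ref{App-A} step for step. The gap is in the translation estimate, which you correctly flag as the crux. You propose to bound the coupling contribution via the $(1-\alpha)$-H\"older continuity of $h_\varepsilon$ (Lemma~\ref{L2.4-1}) and close by a Bihari-type comparison of the form
\[
D_\varepsilon(t,v) \le D_\varepsilon(0,v) + t\,\|\tau_v\nu-\nu\|_2 + \kappa\int_0^t\bigl(\omega(v) + C\,D_\varepsilon(\tau,v)^{1-\alpha}\bigr)\di\tau,
\]
and then to conclude $\lim_{v\to0}\sup_{\varepsilon}\sup_{t\le T'}D_\varepsilon(t,v)=0$. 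That conclusion does not follow: the comparison ODE $\dot u = C u^{1-\alpha}$, $u(0)=u_0$, has maximal solution $u(t)=(u_0^\alpha + \alpha C t)^{1/\alpha}$, which as $u_0\to0$ tends to $(\alpha C t)^{1/\alpha}>0$ for $t>0$, not to $0$. The ODE fails uniqueness at $u=0$ --- the very degeneracy that produces finite-time sticking in Theorem~\ref{T3.2} --- so the Bihari bound is uniformly nonvanishing in $v$ on any $(0,T']$ and cannot supply the smallness Kolmogorov--Riesz--Fr\'echet requires.

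The paper avoids this by invoking the one-sided Lipschitz continuity of $-h_\varepsilon$ on $[-2\pi,2\pi]$ (Lemma~\ref{L2.6-1}), available after reducing angles modulo $2\pi$ (Remark~\ref{R3.1}). The decisive point is to differentiate the squared norm $\|\theta_\varepsilon(t,\cdot-v)-\theta_\varepsilon(t,\cdot)\|_2^2$ rather than take $L^2$ norms of $F_\varepsilon(\tau,\cdot-v)-F_\varepsilon(\tau,\cdot)$: writing $a:=\theta_\varepsilon(t,x-v)-\theta_\varepsilon(t,y)$, $b:=\theta_\varepsilon(t,x)-\theta_\varepsilon(t,y)$ and using the oddness of $h_\varepsilon$, the coupling contribution appears as the sign-paired product
\[
2\kappa\iint_{\Omega^2}\psi_\varepsilon(x,y)\,(a-b)\bigl((-h_\varepsilon)(a)-(-h_\varepsilon)(b)\bigr)\di y\di x \;\le\; 2\kappa L C_\psi\,\|\theta_\varepsilon(t,\cdot-v)-\theta_\varepsilon(t,\cdot)\|_2^2,
\]
a linear feedback that closes by ordinary Gr\"onwall. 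By estimating $\|\tau_vF_\varepsilon(\tau)-F_\varepsilon(\tau)\|_2$ directly you discard exactly this sign structure, and are forced back onto the sublinear H\"older bound. The same one-sided Lipschitz mechanism drives the stability estimate $\mathcal{I}_{23}$ in Lemma~\ref{L4.2}. Replacing your H\"older/Bihari step by the one-sided Lipschitz/Gr\"onwall argument applied to the squared $L^2$ norm fixes the proof; the rest of your plan then goes through.
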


\begin{proof}
Consider the regularized Cauchy problem corresponding to \eqref{A-2}:
\begin{equation} \label{C-4}
	\begin{cases}
		\partial_t \theta_\varepsilon (t, x) = \nu(x) + \kappa \displaystyle\int_{\Omega} \psi_\varepsilon(x, y) h_\varepsilon \big( \theta_\varepsilon(t, y) - \theta_\varepsilon(t, x) \big) \di y, & t > 0, \; x \in \Omega, \\
		\theta_\varepsilon(0, x) = \theta^{\mathrm{in}}(x), & x \in \Omega,
	\end{cases}
\end{equation}
where $\psi_\varepsilon$ is defined as in \eqref{B-12-1} and $h_\varepsilon$ is defined as in \eqref{B-3-1}. We can define 
\begin{align*}
g_\varepsilon : L^2 (\Omega) \rightarrow L^2 (\Omega), \ q \mapsto g_\varepsilon(q)(x) := \nu(x)+ \kappa \displaystyle\int_\Omega \psi_\varepsilon (x,y) h_\varepsilon (q(y) - q(x)) \di y.
\end{align*}
It can be easily shown that $g_\varepsilon$ is Lipschitz continuous, since $\psi_\varepsilon$ is bounded and $h_\varepsilon$ is Lipschitz continuous. Then we can write \eqref{C-4} as 
\begin{align*}
\begin{cases}
\partial_t \theta_\varepsilon (t,x) = g_\varepsilon(\theta_\varepsilon(t,x)), \\
\theta_\varepsilon(0,x) = \theta^\mathrm{in}(x).
\end{cases}
\end{align*} 
Therefore, the regularized Cauchy problem \eqref{C-4} has a unique global solution $\theta_\varepsilon \in C([0,T]; L^2(\Omega))$ due to Picard-Lindelöf's Theorem. Then, we can establish the existence of a solution to \eqref{A-2} using Proposition \ref{P3.2}. Since the proof is very lengthy, we briefly sketch a proof strategy as follows. 
\vspace{0.1cm}
\begin{itemize}
\item Step A:~We show that the sequence $(\theta_\varepsilon)_{\varepsilon>0}$ is equicontinuous in $t$ and relatively compact for every fixed $t\in [0,T]$, which allows us to extract a convergent subsequence in the space $C([0,T);L^2(\Omega))$.
\vspace{0.1cm}
\item Step B:~We show that the limit of this subsequence solves \eqref{A-2}. 
\end{itemize}
The detailed argument for the above steps will be given in Appendix \ref{App-A}.
\end{proof}
\begin{remark} \label{R3.1}
It is clear that if $\theta_\varepsilon$ solves \eqref{C-4} with initial data $\theta^\mathrm{in}$, then $\theta_\varepsilon + 2\pi$ also solves \eqref{C-4} with initial data $\theta^\mathrm{in} + 2\pi$. The same is true for $\theta$ solving \eqref{A-2}. Therefore, throughout this paper, we can consider all angles modulo $2\pi$, i.e. $\theta_\varepsilon(t,x), \theta(t,x) \in (-\pi,\pi]$. We will establish Filippov uniqueness for $S^1$-valued angles $\theta$ in Theorem \ref{T4.1} later. Since classical solutions are also solutions in the weaker Filippov sense, we automatically obtain uniqueness of classical solutions with this argument.
\end{remark}
\subsection{Identical oscillators}\label{sec:3.3}
In this subsection, we study the emergent behaviors of  a global solution $\theta(t,x)$ to the SCKM \eqref{A-2} with $\nu \equiv 0$ on a bounded domain $\Omega = [0, 1].$ \newline

For $n \in {\mathbb N}$ there exists a compact domain $\Omega_n$ such that the restriction $\theta(t,\cdot) \vert_{\Omega_n}$ is continuous. We set 
 \[
 \mathcal{D}_{n}(\theta(t)) = \max_{x,y\in \Omega_{n}} |\theta(t,x) - \theta(t,y)|= \max_{x\in \Omega_{n}} \theta(t,x) - \min_{y\in \Omega_{n}} \theta(t,y), \quad \mathcal{D}(\theta(t)) = \sup_{x,y\in \Omega} |\theta(t,x) - \theta(t,y)|.
 \]
Then, $\mathcal{D}_{n}(\theta(t))$ is Lipschitz continuous, and it is differentiable almost everywhere. First, we provide the following lemma.
\begin{lemma}\label{L3.1}
For $T \in (0, \infty)$, let $\theta = \theta(t,x)$ be a solution to \eqref{A-2} that satisfies the following a priori condition:
\[
	\sup\limits_{0\le t\le T} \sup\limits_{x\in\Omega} \Big( \nu(x) + \kappa \displaystyle\int_{\Omega} \psi(x,y) h( \theta(t, y) - \theta(t, x))\di y \Big) \le C.\]
Then, the following assertions hold.
\begin{enumerate}
\item[(i)]		
The diameter function $\mathcal{D}(\theta)$ is Lipschitz continuous in $[0,T)$. 
\vspace{0.1cm}
\item[(ii)]	
There exists a bounded index sequence $(\Omega_{n})_{n\in \mathbb{R}}$ with $\Omega_n \subset \Omega$ and $|\Omega \setminus \Omega_n | < \frac{1}{n}$ such that:~ for a.e. $t>0$,
\begin{equation} 
\begin{cases} \label{New-1}
\displaystyle \mathcal{D}(\theta(t))=\lim\limits_{n\to \infty}	\mathcal{D}_{n}(\theta(t)). \\
\displaystyle  \frac{\di}{\di t} \mathcal{D}(\theta(t))\le \limsup\limits_{n\to \infty} \frac{\di}{\di t}\mathcal{D}_{n}(\theta(t))=\limsup\limits_{n\to \infty} \left( \frac{\di}{\di t}\max_{x\in \Omega_{n}} \theta(t,x) -\frac{\di}{\di t} \min_{y\in \Omega_{n}} \theta(t,y) \right).
\end{cases}		
\end{equation}
\end{enumerate}		
\end{lemma}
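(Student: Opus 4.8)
The plan is to treat the two items separately, the common engine being the Duhamel form of \eqref{A-2} together with the measurable--selection toolbox of Lemmas \ref{L2.6}--\ref{L2.9}. For item (i), I would start from the integral identity $\theta(t,x)=\theta^{\mathrm{in}}(x)+\int_0^t V(s,x)\,\di s$, valid for a.e.\ $x$, with velocity field $V(s,x):=\nu(x)+\kappa\int_\Omega\psi(x,y)h(\theta(s,y)-\theta(s,x))\,\di y$. The a priori hypothesis gives $V\le C$, while Lemma \ref{L2.3} ($|h|\le1$) and \eqref{B-9} bound the nonlocal term $\kappa\int_\Omega\psi(x,y)h(\theta(s,y)-\theta(s,x))\,\di y$ in absolute value by $\kappa C_\psi$; since $\nu$ is bounded in every regime where this lemma is applied (e.g.\ $\nu\equiv0$ or $\nu\in B(\Omega)$), $V$ is two-sidedly bounded, say $|V(t,x)|\le\widetilde C$ for a.e.\ $(t,x)$. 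Hence $t\mapsto\theta(t,x)$ is $\widetilde C$-Lipschitz for a.e.\ $x$, and feeding this into $|\theta(t,x)-\theta(t,y)|\le|\theta(s,x)-\theta(s,y)|+2\widetilde C|t-s|$ and taking the essential supremum over $x,y$ (then swapping $t\leftrightarrow s$) shows that $\mathcal{D}(\theta(\cdot))$ --- which is finite as soon as $\mathcal{D}(\theta^{\mathrm{in}})$ is, as in the applications --- is $2\widetilde C$-Lipschitz on $[0,T)$. Exactly the same estimate with $\mathrm{ess\,sup}$ replaced by $\max_{\Omega_n}$ will show that each $\mathcal{D}_n(\theta(\cdot))$ is $2\widetilde C$-Lipschitz with a constant \emph{independent of $n$}, once the $\Omega_n$ are fixed below.

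For item (ii), I would first note that $t\mapsto V(t,x)$ is continuous for a.e.\ $x$: this follows from $\theta\in C((0,T);L^2(\Omega))$, the H\"older continuity and boundedness of $h$ (Lemma \ref{L2.3}), integrability of $\psi(x,\cdot)$, and dominated convergence --- so that $t\mapsto\theta(t,x)$ is in fact $C^1$ for a.e.\ $x$. Applying Scorza--Dragoni (Lemma \ref{L2.6}) to $(t,x)\mapsto\theta(t,x)$ on $\bbr_+\times\Omega$ and intersecting the resulting compact sets with this co-null $x$-set yields compact $\Omega_n\subset\Omega$ with $|\Omega\setminus\Omega_n|<1/n$ on which $\theta$ is jointly continuous and $t\mapsto\theta(t,x)$ is differentiable for \emph{every} $x\in\Omega_n$; in particular the max and min in $\mathcal{D}_n(\theta(t))=\max_{x\in\Omega_n}\theta(t,x)-\min_{y\in\Omega_n}\theta(t,y)$ are attained. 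The first line of \eqref{New-1} then follows by applying Lemma \ref{L2.8} to $g(x,y):=|\theta(t,x)-\theta(t,y)|\in L^1(\Omega^2)$ with $B=\Omega^2$ and $B_n=\Omega_n^2$ (compact, $|B\setminus B_n|\le 2/n$, $g|_{B_n}$ continuous), which gives $\lim_n\mathcal{D}_n(\theta(t))=\lim_n\max_{\Omega_n^2}g=\mathrm{ess\,sup}_{\Omega^2}g=\mathcal{D}(\theta(t))$ for every $t\in(0,T)$.

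For the second line of \eqref{New-1}, the equality is just linearity of the time derivative: for each fixed $n$, both $t\mapsto\max_{\Omega_n}\theta(t,\cdot)$ and $t\mapsto\min_{\Omega_n}\theta(t,\cdot)=-\max_{\Omega_n}(-\theta(t,\cdot))$ are differentiable a.e.\ by Danskin (Lemma \ref{L2.7}, whose hypotheses are met by the previous paragraph), hence so is their difference $\mathcal{D}_n(\theta(\cdot))$, and the union over $n$ of the exceptional $t$-sets is still null. For the inequality I would set $H_n:=\mathcal{D}_n(\theta(\cdot))$ and $H:=\mathcal{D}(\theta(\cdot))$, note that $H_n\to H$ pointwise (first line) and that $\{H_n\}\cup\{H\}$ is equi-Lipschitz (item (i)), and invoke Lemma \ref{L2.9} to conclude $\tfrac{\di}{\di t}H(t)\le\limsup_n\tfrac{\di}{\di t}H_n(t)$ a.e.

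The step I expect to be the main obstacle is verifying the regularity hypothesis of Lemma \ref{L2.9} for the derivatives $\tfrac{\di}{\di t}H_n$. By Danskin one has $\tfrac{\di}{\di t}\mathcal{D}_n(\theta(t))=\max_{x\in\widehat K_n^{+}(t)}V(t,x)-\min_{y\in\widehat K_n^{-}(t)}V(t,y)$ with $\widehat K_n^{\pm}(t)$ the argmax/argmin sets in $\Omega_n$; this is bounded by $2\widetilde C$, but it need not be Lipschitz --- or even continuous --- in $t$, since the argmax set can jump (this already happens for smooth $\theta$). So Lemma \ref{L2.9} cannot be quoted with its literal hypothesis that $\tfrac{\di}{\di t}H_n$ be uniformly locally Lipschitz. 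The resolution I would carry out is to observe that the conclusion of Lemma \ref{L2.9} in fact needs only pointwise convergence $H_n\to H$ and equi-Lipschitz continuity of the $H_n$: starting from $H_n(t)=H_n(a)+\int_a^t\tfrac{\di}{\di t}H_n(s)\,\di s$, one extracts a weak-$\ast$ limit of the bounded sequence $\{\tfrac{\di}{\di t}H_n\}$ in $L^\infty_{\mathrm{loc}}(\bbr_+)$, identifies it with $\tfrac{\di}{\di t}H$ (using $H_n\to H$), and uses that a weak-$\ast$ limit lies a.e.\ between the $\liminf$ and $\limsup$ of the sequence (a reverse-Fatou argument). A secondary point, already packaged inside Lemma \ref{L2.8}, is the inequality $\max_{\Omega_n^2}g\le\mathrm{ess\,sup}_{\Omega^2}g$, which requires the continuous restriction to be evaluated at Lebesgue density points of $\Omega_n^2$ rather than compared naively.
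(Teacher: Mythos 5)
Your proof is correct and follows the same route the paper takes, namely feeding Scorza--Dragoni (Lemma~\ref{L2.6}) and Danskin (Lemma~\ref{L2.7}) into the approximation Lemmas~\ref{L2.8} and~\ref{L2.9}; the paper's own proof is a one-line citation of these lemmas with no further detail. The added value of your write-up is that in executing that combination you spot a genuine mismatch: the hypothesis of Lemma~\ref{L2.9} as stated --- that $\tfrac{\di}{\di t}H_n$ be locally Lipschitz with constants independent of $n$ --- is not verified by $H_n=\mathcal{D}_n(\theta(\cdot))$, since the Danskin derivative involves a maximum over an argmax set that can switch, so it is merely bounded, not Lipschitz. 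Your repair (extract a weak-$\ast$ limit of $\{\tfrac{\di}{\di t}H_n\}$ in $L^\infty_{\mathrm{loc}}$, identify it with $\tfrac{\di}{\di t}H$ via $H_n\to H$ and the fundamental theorem of calculus, then conclude via reverse Fatou that a weak-$\ast$ limit sits a.e.\ between $\liminf$ and $\limsup$) is sound and uses only the equi-Lipschitz bound on $H_n$ from part~(i), so it effectively proves the conclusion of Lemma~\ref{L2.9} under a hypothesis that is actually checkable in this application. Two small points to tighten: to obtain a compact $\Omega_n$ on which $\theta$ is jointly continuous and $t\mapsto\theta(t,x)$ is differentiable for \emph{every} $x\in\Omega_n$, do not intersect the Scorza--Dragoni compact with the co-null $x$-set (the intersection need not be compact); rather apply Scorza--Dragoni directly on that Borel co-null set. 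And your two-sided bound $|V|\le\widetilde C$ uses boundedness of $\nu$, which the stated one-sided a~priori hypothesis does not literally give --- as you note, it holds in every regime where the lemma is invoked, but it would be cleaner to add $\nu\in B(\Omega)$ explicitly.
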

\begin{proof}
We combine Lemma \ref{L2.8} and Lemma \ref{L2.9} to derive the desired estimates.
\end{proof}
\begin{remark}\label{R3.2}
We set 
\begin{equation} \label{New-2}
M(t) =\left\{x_M~\Big| ~\theta(t,x_M)=\sup_{x\in \Omega} \theta(t,x)\right\}, \quad  m(t) =\left\{x_m~\Big| ~\theta(t,x_m)=\inf_{x\in \Omega} \theta(t,x)\right\}.
\end{equation}
Then, the result of Lemma \ref{L3.1} yields the existence of some time-varying indices $x_M\in M(t)$ and $x_m\in m(t)$ such that 
\begin{align}\label{New-3}
		\frac{\di}{\di t} \mathcal{D}(\theta(t))=\frac{\di}{\di t}\sup_{x\in \Omega} \theta(t,x) -\frac{\di}{\di t} \inf_{y\in \Omega} \theta(t,y)=\frac{\di}{\di t}\theta(t,x_M)-\frac{\di}{\di t}\theta(t,x_m), \quad \text{a.e.} \quad t>0.
\end{align}
Otherwise, we can choose some approximation sequences $(\Omega_n, x_{M_n(t) }, x_{m_n(t)})$ and use \eqref{New-1} to derive the estimate of the derivative of  $\mathcal{D}(\theta(t))$.
\end{remark}
We are now ready to provide complete synchronization result. Throughout the paper, as long as there is no confusion, we suppress $t$-dependence in $\theta$ and use a handy notation:
 \[ \theta(x) \equiv \theta(t,x), \quad {\mathcal D}_0 := {\mathcal D}(\theta^{\mathrm{in}}). \]
\begin{theorem} \label{T3.2}
Suppose that natural frequency function and initial data satisfy 
\[ \nu \equiv 0, \quad \theta^{\mathrm{in}} \in L^2(\Omega), \quad 0\leq \mathcal{D}_0 <\pi, \]
and let $\theta = \theta(t,x)$ be a global solution to \eqref{A-2} with $\alpha \in (0,1)$. Then, the following assertions hold. 
\begin{enumerate}
\item[(i)]
The diameter functional ${\mathcal D}(\theta)$ is contractive in the sense that 
\[ \sup\limits_{0\le t <\infty } 	\mathcal{D}(\theta(t))\le \mathcal{D}_0. \]
\item[(ii)]
Finite time complete phase synchronization occurs, i.e., there exists $t_c > \frac{(1-\beta)\mathcal{D}_0^{1+\alpha} }{\alpha \kappa \sin(\mathcal{D}_0) (2-2^\beta)} $   such that 
\[ {\mathcal D}(\theta(t)) = 0, \quad t \geq t_c. \]
\end{enumerate}
\end{theorem}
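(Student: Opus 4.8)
The plan is to derive the closed differential inequality
\[
\frac{\di}{\di t}\mathcal{D}(\theta(t))\ \le\ -\,\kappa\,\frac{2-2^\beta}{1-\beta}\,\frac{\sin \mathcal{D}(\theta(t))}{\mathcal{D}(\theta(t))^{\alpha}}
\]
and then compare it with a Bernoulli‑type scalar ODE. As preparation, note that because $\nu\equiv0$, $|h|\le1$ (Lemma~\ref{L2.3}) and $\sup_{x}\int_\Omega\psi(x,y)\,\di y\le C_\psi$ by \eqref{B-9}, the a priori bound in Lemma~\ref{L3.1} holds with $C=\kappa C_\psi$; hence $\mathcal{D}(\theta)$ is Lipschitz with $\mathcal{D}(\theta(0))=\mathcal{D}_0$, and, via Lemma~\ref{L3.1}(ii) and Remark~\ref{R3.2}, for a.e.\ $t$ I may work with compact sets $\Omega_n\subset\Omega$ with $|\Omega\setminus\Omega_n|<\tfrac1n$, maximizers $x_{M_n}$ and minimizers $x_{m_n}$ of $\theta(t,\cdot)$ on $\Omega_n$, and the bound $\frac{\di}{\di t}\mathcal{D}(\theta(t))\le\limsup_n(\partial_t\theta(t,x_{M_n})-\partial_t\theta(t,x_{m_n}))$ with $\mathcal{D}(\theta(t))=\lim_n\mathcal{D}_n(\theta(t))$. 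Two routine facts will be used repeatedly: since $|h|\le1$ and $\sup_x\int_E|x-y|^{-\beta}\di y\le\tfrac{2^\beta}{1-\beta}|E|^{1-\beta}$, the contribution of $\Omega\setminus\Omega_n$ to $\partial_t\theta(t,x_{M_n})$ and $\partial_t\theta(t,x_{m_n})$ is at most $\kappa\varepsilon_n$ with $\varepsilon_n:=\tfrac{2^\beta}{1-\beta}n^{-(1-\beta)}\to0$, uniformly in the moving base points; and, as long as $\mathcal{D}(\theta(t))<\pi$, for $y\in\Omega_n$ the two gaps $\theta(t,x_{M_n})-\theta(t,y)$ and $\theta(t,y)-\theta(t,x_{m_n})$ lie in $[0,\mathcal{D}(\theta(t))]\subset[0,\pi)$, so (by oddness of $h$ and $h\ge0$ on $[0,\pi)$) the value of $h$ at the corresponding negative gap is $\le0$.

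With this, part~(i) follows by a bootstrap: on the maximal time interval where $\mathcal{D}(\theta(t))<\pi$, the sign fact gives $\partial_t\theta(t,x_{M_n})\le\kappa\varepsilon_n$ and $\partial_t\theta(t,x_{m_n})\ge-\kappa\varepsilon_n$, hence $\frac{\di}{\di t}\mathcal{D}(\theta(t))\le0$ after letting $n\to\infty$; thus $\mathcal{D}(\theta(t))\le\mathcal{D}_0<\pi$ there, and by continuity this interval is all of $[0,\infty)$.

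For the decay, fix $t$ with $D:=\mathcal{D}(\theta(t))>0$. The key step is to quantify the sign fact using monotonicity of $h$: Remark~\ref{R2.1} gives $h(\phi)\ge\tfrac{\phi}{D}h(D)=\phi\,\tfrac{\sin D}{D^{1+\alpha}}$ for $0<\phi\le D$, so that for $y\in\Omega_n$,
\[
\psi(x_{M_n},y)\,h\big(\theta(t,y)-\theta(t,x_{M_n})\big)\ \le\ -\,\frac{\sin D}{D^{1+\alpha}}\,\psi(x_{M_n},y)\big(\theta(t,x_{M_n})-\theta(t,y)\big),
\]
and similarly at $x_{m_n}$. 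Integrating over $\Omega_n$, adding the $\varepsilon_n$‑tails and forming the difference of the evolution equations,
\begin{align*}
\partial_t\theta(t,x_{M_n})-\partial_t\theta(t,x_{m_n}) \le{}& -\kappa\,\frac{\sin D}{D^{1+\alpha}}\Big(\int_{\Omega_n}\psi(x_{M_n},y)\big(\theta(t,x_{M_n})-\theta(t,y)\big)\,\di y \\
&{}+\int_{\Omega_n}\psi(x_{m_n},y)\big(\theta(t,y)-\theta(t,x_{m_n})\big)\,\di y\Big)+2\kappa\varepsilon_n.
\end{align*}
Writing $D_n:=\mathcal{D}_n(\theta(t))$ and $s(y):=\theta(t,y)-\theta(t,x_{m_n})\in[0,D_n]$ for $y\in\Omega_n$ (so $\theta(t,x_{M_n})-\theta(t,y)=D_n-s(y)$), the parenthesis equals $D_n\int_{\Omega_n}\psi(x_{M_n},y)\di y+\int_{\Omega_n}s(y)\big(\psi(x_{m_n},y)-\psi(x_{M_n},y)\big)\di y$; bounding the first integral below by $D_n\big(\tfrac{1}{1-\beta}-\varepsilon_n\big)$ via \eqref{B-10}, and — since $0\le s(y)\le D_n$ and $\{y\in\Omega_n:\psi(x_{m_n},y)<\psi(x_{M_n},y)\}\subset\{|x_{M_n}-y|\le|x_{m_n}-y|\}$ — the second below by $-D_n\,\mathcal{I}_{x_{M_n},x_{m_n}}\ge-D_n\,\tfrac{2^\beta-1}{1-\beta}$ via \eqref{B-12} and Lemma~\ref{L2.5}, the parenthesis is $\ge D_n\big(\tfrac{2-2^\beta}{1-\beta}-\varepsilon_n\big)$. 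Sending $n\to\infty$ (so $D_n\to D$, $\varepsilon_n\to0$) and using Lemma~\ref{L3.1}(ii) yields the advertised differential inequality.

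Finally, since $\mathcal{D}(\theta(t))\le\mathcal{D}_0$ and $\theta\mapsto\tfrac{\sin\theta}{\theta}$ is nonincreasing on $(0,\pi)$ (Lemma~\ref{L2.4}(ii)), the inequality improves to $\frac{\di}{\di t}\mathcal{D}(\theta(t))\le-C_\ast\mathcal{D}(\theta(t))^{1-\alpha}$ with $C_\ast:=\kappa\tfrac{2-2^\beta}{1-\beta}\tfrac{\sin\mathcal{D}_0}{\mathcal{D}_0}$; integrating $\frac{\di}{\di t}\mathcal{D}(\theta)^\alpha\le-\alpha C_\ast$ while $\mathcal{D}(\theta)>0$ gives $\mathcal{D}(\theta(t))^\alpha\le\mathcal{D}_0^\alpha-\alpha C_\ast t$, which forces $\mathcal{D}(\theta(t))=0$ no later than $t_c=\mathcal{D}_0^\alpha/(\alpha C_\ast)=\tfrac{(1-\beta)\mathcal{D}_0^{1+\alpha}}{\alpha\kappa\sin(\mathcal{D}_0)(2-2^\beta)}$ (the case $\mathcal{D}_0=0$ being trivial); together with $\frac{\di}{\di t}\mathcal{D}(\theta)\le0$ and $\mathcal{D}(\theta)\ge0$ this gives (ii). The step I expect to be the main obstacle is the decay estimate: extracting the sharp constant $\tfrac{2-2^\beta}{1-\beta}$ from the two position‑ and phase‑coupled singular integrals, which forces one to use the exact evaluations $\min_x\int_0^1\psi(x,y)\di y=\tfrac{1}{1-\beta}$ in \eqref{B-10} and $\max_{a,b}\mathcal{I}_{a,b}=\tfrac{2^\beta-1}{1-\beta}$ in Lemma~\ref{L2.5}, the monotonicity of $h$ in Remark~\ref{R2.1}, and a uniform control of the $\Omega_n$‑truncation along the moving argmax/argmin; the remainder is a standard ODE comparison.
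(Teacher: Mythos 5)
Your proof is correct, and its overall architecture---show the diameter is nonincreasing, derive the closed Bernoulli-type inequality $\frac{\di}{\di t}\mathcal{D}\le -C_*\mathcal{D}^{1-\alpha}$, then integrate---coincides with the paper's. The one genuine difference is the decomposition used to extract the decay. The paper lower-bounds $h(\phi)$ by the sublinear function $\frac{\sin\mathcal{D}_0}{\mathcal{D}_0}\phi^{1-\alpha}$ (monotonicity of $\sin\theta/\theta$, Lemma~\ref{L2.4}(ii)) and then needs the superadditivity of $x\mapsto x^{1-\alpha}$ (Lemma~\ref{L2.2}) to close the resulting position integrals, via $(\theta(y)-\theta(x_m))^{1-\alpha}+(\theta(x_M)-\theta(y))^{1-\alpha}\ge\mathcal{D}^{1-\alpha}$. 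You instead take the \emph{linear} lower bound $h(\phi)\ge\frac{\phi}{D}h(D)=\frac{\sin D}{D^{1+\alpha}}\phi$ from Remark~\ref{R2.1} (i.e.\ Lemma~\ref{L2.4}(i)), so the phase gaps enter the $y$-integrals linearly, the telescoping $s(y)+(D_n-s(y))=D_n$ closes them exactly with no superadditivity needed, and Lemma~\ref{L2.4}(ii) enters only once at the end to replace $\sin D/D$ by $\sin\mathcal{D}_0/\mathcal{D}_0$. Both routes deliver the identical constant $\kappa\,\frac{\sin\mathcal{D}_0}{\mathcal{D}_0}\,\frac{2-2^\beta}{1-\beta}$ and the same synchronization time, so neither is sharper, but yours dispenses with Lemma~\ref{L2.2} entirely. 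Your Part~(i) argument (nonpositive a.e.\ derivative on the maximal interval where $\mathcal{D}<\pi$, closed by continuity) is an equivalent, slightly cleaner substitute for the paper's proof by contradiction, and the explicit $\Omega_n$-truncation with the $\varepsilon_n:=\frac{2^\beta}{1-\beta}n^{-(1-\beta)}$ tail estimate fills in details the paper only sketches in Remark~\ref{R3.2}.
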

\begin{proof}
\noindent (i)~First, we use \eqref{B-9} and $h( \theta(y) - \theta(x))\le 1$ to see 
\[	\sup\limits_{0\le t<\infty}	\sup\limits_{x\in\Omega}  \Big( \kappa \int_{\Omega} \psi(x,y) h( \theta(y) - \theta(x))\di y \Big) \le  C_{\psi}\kappa.\]
By Lemma \ref{L3.1} and Remark \ref{R3.2}, we obtain 
\[\frac{\di}{\di t} \mathcal{D}(\theta) =  \frac{\di  \theta(x_M)}{\di t}  - \frac{\di  \theta(x_m)}{\di t},\]
where $x_M\in M(t)$ and $x_m\in m(t)$ are defined in \eqref{New-2}. \newline

\noindent $\bullet$~Case A: Suppose that 
\[  \mathcal{D}_0 \neq 0. \]
For every $\tilde t$ with $0<\mathcal{D} (\theta(\tilde t)) <\pi$, we have 
	\begin{align*}
	\frac{\di}{\di t} \mathcal{D}(\theta(t)) \Big\vert_{t= \tilde t} = \kappa \displaystyle\int_{\Omega} \psi(x_M, y) \frac{\sin\big( \theta(y) - \theta(x_M) \big)}{|\theta(y) - \theta(x_M)|_o^{\alpha}}\di y - \kappa \displaystyle\int_{\Omega} \psi(x_m, y) \frac{\sin\big( \theta(y) - \theta(x_m) \big)}{|\theta(y) - \theta(x_m)|^{\alpha}_o} \di y <0,
\end{align*}
since it holds
\begin{align*}
\sin(\theta(y)-\theta(x_M))<0 \quad \mathrm{and} \quad \sin(\theta(y)-\theta(x_m))>0
\end{align*}
due to the definition of $x_M$ and $x_m$. This implies that there exists a sufficiently small $\varepsilon$ such that $\mathcal{D}(\theta(t))$ is strictly decreasing for $t\in [0,\varepsilon)$. \newline

\noindent Now, we claim that 
\begin{equation} \label{C-5}
	\mathcal{D}(\theta(t)) <\mathcal{D}_0 \quad \mathrm{for} \ t>0.
\end{equation}
Suppose the contrary holds, i.e., there exists finite time $t_1>0$ such that
\begin{align*}
	\mathcal{D}(\theta(t)) < \mathcal{D}_0 \quad \mathrm{for} \quad t\in (0,t_1) \quad \mathrm{and} \quad \mathcal{D}(\theta(t_1)) = \mathcal{D}_0. 
\end{align*}
Recall that
\begin{equation} \label{C-6}
	\frac{\di}{\di t} \mathcal{D}(\theta (t)) \Big\vert_{t=t_1} < 0.
\end{equation}
Once the relation \eqref{C-6} holds, then it is contradictory to 
\[ \mathcal{D}(\theta(t)) < \mathcal{D}(\theta(t_1)) \quad \mbox{for $t<t_1$}. \]
\noindent $\bullet$~Case B: Suppose that 
\[\mathcal{D}_0=0. \]
Then, we have 
\[ \mathcal{D}(\theta(t)) \equiv 0, \quad t \geq 0. \]
Finally, we combine the results of Case A and Case B to find the desired estimate.

\vspace{0.2cm}

\noindent (ii)~We claim that ${\mathcal D}(\theta)$ satisfies 
\begin{equation}\label{C-7}
		\frac{\di}{\di t} \mathcal{D}(\theta(t)) \leq -\kappa\frac{\sin\big( \mathcal{D}_0 \big)}{\mathcal{D}_0}  \frac{2-2^{\beta}}{1-\beta}  \mathcal{D}(\theta(t))^{1-\alpha}, \quad \mbox{a.e.}~t > 0.
\end{equation}
{\it Derivation of \eqref{C-7}}:~We make use of Lemma \ref{L2.4} to find
	\begin{align*}
	\begin{aligned}
	& \frac{\sin\big( \theta(y) - \theta(x_m) \big)}{|\theta(y) - \theta(x_m)|^{\alpha}_o} \geq \frac{\sin \mathcal{D}(\theta (t)) }{\mathcal{D}(\theta (t))} \big( \theta(y) - \theta(x_m) \big)^{1-\alpha} \geq 
	\frac{\sin \mathcal{D}_0 }{\mathcal{D}_0} \big( \theta(y) - \theta(x_m) \big)^{1-\alpha}, \\
	& \frac{\sin\big( \theta(x_M) - \theta(y) \big)}{|\theta(x_M) - \theta(y)|^{\alpha}_o} \geq \frac{\sin \mathcal{D}(\theta (t)) }{\mathcal{D}(\theta (t))} \big( \theta(x_M) - \theta(y) \big)^{1-\alpha} \geq 
	\frac{\sin \mathcal{D}_0 }{\mathcal{D}_0} \big( \theta(x_M) - \theta(y) \big)^{1-\alpha}.
	\end{aligned}
	\end{align*}
	These yield
	\[
		\frac{\sin\big( \theta(y) - \theta(x_M) \big)}{|\theta(y) - \theta(x_M)|^{\alpha}_o} = -\frac{\sin\big( \theta(x_M) - \theta(y) \big)}{|\theta(x_M) - \theta(y)|^{\alpha}_o} \leq - \frac{\sin \mathcal{D}_0 }{\mathcal{D}_0} \big( \theta(x_M) - \theta(y) \big)^{1-\alpha},
	\]
and 
	\begin{align*}
	\begin{aligned}
		&\frac{\di}{\di t} \mathcal{D}(\theta(t))  = \kappa \displaystyle\int_{\Omega} \psi(x_M, y) \frac{\sin\big( \theta(y) - \theta(x_M) \big)}{|\theta(y) - \theta(x_M)|_o^{\alpha}}\di y  - \kappa \displaystyle\int_{\Omega} \psi(x_m, y) \frac{\sin\big( \theta(y) - \theta(x_m) \big)}{|\theta(y) - \theta(x_m)|^{\alpha}_o} \di y \\
		&\hspace{0.5cm}  \leq  -\kappa \displaystyle\int_{\Omega} \psi(x_M, y) \frac{\sin \mathcal{D}_0 }{\mathcal{D}_0} \big( \theta(x_M) - \theta(y) \big)^{1-\alpha} \di y - \kappa \displaystyle\int_{\Omega} \psi(x_m, y) \frac{\sin \mathcal{D}_0 }{\mathcal{D}_0} \big( \theta(y) - \theta(x_m) \big)^{1-\alpha} \di y \\
		&\hspace{0.5cm} \leq  -\kappa \frac{\sin \mathcal{D}_0 }{\mathcal{D}_0} \int_{\Omega} (\psi(x_M, y) - \psi(x_m, y))  \big( \theta(x_M) - \theta(y) \big)^{1-\alpha} \di y \\
		&\hspace{0.7cm}  - \kappa \frac{\sin \mathcal{D}_0 }{\mathcal{D}_0} \int_{\Omega} \psi(x_m, y) \left(\big( \theta(y) - \theta(x_m) \big)^{1-\alpha}+ \big( \theta(x_M) - \theta(y) \big)^{1-\alpha} \right) \di y.
	\end{aligned}		
	\end{align*}
			Then, we use Lemma \ref{L2.2} to see
			\[\big( \theta(y) - \theta(x_m) \big)^{1-\alpha}+ \big( \theta(x_M) - \theta(y) \big)^{1-\alpha} \ge \left( \theta(x_M) - \theta(x_m) \right)^{1-\alpha}.\]
Therefore, we have 
			\begin{align} 
			\begin{aligned} \label{New7}
		 \frac{\di}{\di t} \mathcal{D}(\theta(t)) &\leq -\kappa \frac{\sin \mathcal{D}_0 }{\mathcal{D}_0} \int_{\Omega} (\psi(x_M, y) - \psi(x_m, y))  \big( \theta(x_M) - \theta(y) \big)^{1-\alpha} \di y \nonumber\\
		&\hspace{0.2cm}- \kappa \frac{\sin \mathcal{D}_0 }{\mathcal{D}_0}  \mathcal{D}(\theta(t))^{1-\alpha} \int_\Omega \psi (x_m,y ) \di y\nonumber \\
		&= \kappa \frac{\sin \mathcal{D}_0 }{\mathcal{D}_0} \int_{|x_M - y|\geq |x_m - y|} (\psi(x_m, y) - \psi(x_M, y))  \big( \theta(x_M) - \theta(y) \big)^{1-\alpha} \di y \nonumber\\
		&\hspace{0.2cm}+ \kappa \frac{\sin \mathcal{D}_0 }{\mathcal{D}_0} \int_{|x_M - y|< |x_m - y|} (\psi(x_m, y) - \psi(x_M, y))  \big( \theta(x_M) - \theta(y) \big)^{1-\alpha} \di y \nonumber\\
		&\hspace{0.2cm}- \kappa \frac{\sin \mathcal{D}_0 }{\mathcal{D}_0}  \mathcal{D}(\theta(t))^{1-\alpha} \int_\Omega \psi (x_m,y )\di y \nonumber\\
&  \leq  \kappa \frac{\sin \mathcal{D}_0 }{\mathcal{D}_0} \int_{|x_M - y|\geq |x_m - y|} (\psi(x_m, y) - \psi(x_M, y))  \big( \theta(x_M) - \theta(y) \big)^{1-\alpha}  \di y \nonumber\\
		&\hspace{0.2cm}- \kappa \frac{\sin \mathcal{D}_0 }{\mathcal{D}_0}  \mathcal{D}(\theta(t))^{1-\alpha} \int_\Omega \psi (x_m,y ) \di y \nonumber\\
		&  \leq  \kappa \frac{\sin \mathcal{D}_0 }{\mathcal{D}_0} \mathcal{D}(\theta(t))^{1-\alpha} \left( \int_{|x_M - y|\geq |x_m - y|} (\psi(x_m, y) - \psi(x_M, y)) \di y - \int_\Omega \psi (x_m,y) \di y \right).
	\end{aligned}
	\end{align}
	Here, we used the following relation in the second inequality:
	\[  |x_M - y|\leq |x_m - y| \quad \Longrightarrow \quad \psi(x_m, y) - \psi(x_M, y)\le0. \]
Next, we combine Lemma \ref{L2.5} and \eqref{B-10} to obtain
\[
		\frac{\di}{\di t} \mathcal{D}(\theta(t)) 
		\leq \kappa  \frac{\sin \mathcal{D}_0 }{\mathcal{D}_0} \left(  \frac{2^{\beta}-1}{1-\beta} - \frac{1}{1-\beta} \right) \mathcal{D}(\theta(t))^{1-\alpha}
		= -\kappa  \frac{\sin \mathcal{D}_0 }{\mathcal{D}_0} \frac{2-2^{\beta}}{1-\beta}  \mathcal{D}(\theta(t))^{1-\alpha},
\]
i.e., 
\[
\frac{\di}{\di t} \mathcal{D}(\theta(t)) \leq  -\kappa  \frac{\sin \mathcal{D}_0 }{\mathcal{D}_0} \frac{2-2^{\beta}}{1-\beta}  \mathcal{D}(\theta(t))^{1-\alpha}, \quad t > 0. \]
We integrate the above inequality to find 
	\begin{align*}
		\mathcal{D}(\theta(t)) \leq \left(\mathcal{D}_0^{\alpha} - \alpha \kappa  \frac{\sin \mathcal{D}_0 }{\mathcal{D}_0}  \frac{2-2^{\beta}}{1-\beta} t \right)^{\frac{1}{\alpha}}.
	\end{align*}
By the comparison principle of ODE, there exists a positive time $t_c$ such that 
\[ t_c \geq  \frac{(1-\beta)\mathcal{D}_0^{1+\alpha} }{\alpha \kappa (\sin \mathcal{D}_0) (2-2^\beta)} \quad \mbox{and} \quad   \mathcal{D}(\theta(t))=0, \quad \mbox{for}~t \geq t_c.  \]
\end{proof}

\begin{remark}
Our result shows that complete phase synchronization occurs in finite time. This phenomenon is analogous to the results in \cite{P-P-J2021, Poyato2019}, where the corresponding discrete singular Kuramoto model with $\psi \equiv 1$ is studied. In our setting, the singularity of $\psi$ further enhances this effect, leading to even faster synchronization. In contrast, the work \cite{KoHa2025} investigates the SCKM with $\alpha = 0$ and a possibly integrable singularity in $\psi$, where it is shown that the phase diameter decays exponentially to zero. Nevertheless, finite-time complete phase synchronization does not occur in general in that case.
\end{remark}

\subsection{Nonidentical oscillators}\label{sec3.3}

In this subsection, we investigate the collective behavior of solutions $\theta(t,x)$ to \eqref{A-2} with $\nu \neq 0$. First, we show the boundedness of phase diameter.
\begin{lemma} \label{L3.3}
Suppose that system parameters, natural frequency function and initial data satisfy 
\begin{align*}
\begin{aligned}
& 0 <  \alpha, \beta < 1, \quad \Omega = [0, 1], \quad  \theta^{\mathrm{in}} \in L^2(\Omega), \\
& \kappa > \frac{\mathcal{D}(\nu) \mathcal{D}_0^\alpha (1-\beta)}{(\sin \mathcal{D}_0) (2-2^\beta)},  \quad  \nu \in B(\Omega), \quad 0<\mathcal{D}_0 <\pi, 
\end{aligned}
\end{align*}
and let $\theta = \theta(t,x)$ be a global solution to \eqref{A-2}. Then, the diameter functional ${\mathcal D}(\theta)$ remains bounded:
\[
\sup_{0 \leq t < \infty} \mathcal{D}(\theta(t)) \le \mathcal{D}_0 <\pi.
\]
\end{lemma}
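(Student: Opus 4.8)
\medskip
\noindent\textbf{Proof proposal.}
The plan is to combine the differential‑inequality computation of Theorem~\ref{T3.2}(ii) with a continuity (bootstrap) argument: heterogeneity only adds a bounded forcing term $\mathcal{D}(\nu)$, which is dominated by the coupling dissipation as soon as $\mathcal{D}(\theta(t))$ is close to $\mathcal{D}_0$, precisely under the stated lower bound on $\kappa$. First I would check that Lemma~\ref{L3.1} and Remark~\ref{R3.2} apply: since $\nu\in B(\Omega)$, $\sup_{\theta\in\bbr}|h(\theta)|\le1$ (Lemma~\ref{L2.3}) and $\max_x\int_\Omega\psi(x,y)\di y\le C_\psi$ by \eqref{B-9}, the a priori bound $\sup_{t}\sup_x\big(\nu(x)+\kappa\int_\Omega\psi(x,y)h(\theta(y)-\theta(x))\di y\big)\le\|\nu\|_\infty+\kappa C_\psi<\infty$ holds, so $t\mapsto\mathcal{D}(\theta(t))$ is Lipschitz and, for a.e. $t>0$, there are indices $x_M\in M(t)$, $x_m\in m(t)$ with $\frac{\di}{\di t}\mathcal{D}(\theta(t))=\partial_t\theta(t,x_M)-\partial_t\theta(t,x_m)$.

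Next, inserting \eqref{A-2} and retaining the natural‑frequency part, I would reproduce the chain of estimates in the proof of Theorem~\ref{T3.2}(ii): bound $\nu(x_M)-\nu(x_m)\le\mathcal{D}(\nu)$; for the coupling integrals use $\theta(y)-\theta(x_m)\in[0,\mathcal{D}(\theta(t))]$ and $\theta(y)-\theta(x_M)\in[-\mathcal{D}(\theta(t)),0]$ together with the monotonicity of $\theta\mapsto\sin\theta/\theta^{1+\alpha}$ and of $\theta\mapsto\sin\theta/\theta$ (Lemma~\ref{L2.4}), the superadditivity $u^{1-\alpha}+v^{1-\alpha}\ge(u+v)^{1-\alpha}$ (Lemma~\ref{L2.2}), the splitting of $\int_\Omega$ according to whether $|x_M-y|\ge|x_m-y|$, and the sharp bounds $\mathcal{I}_{x_m,x_M}\le\tfrac{2^\beta-1}{1-\beta}$ (Lemma~\ref{L2.5}) and $\int_\Omega\psi(x_m,y)\di y\ge\tfrac{1}{1-\beta}$ \eqref{B-10}. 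This yields, for a.e. $t>0$ with $0<\mathcal{D}(\theta(t))<\pi$,
\[
\frac{\di}{\di t}\mathcal{D}(\theta(t))\ \le\ \mathcal{D}(\nu)-\kappa\,\frac{2-2^\beta}{1-\beta}\,\frac{\sin\mathcal{D}(\theta(t))}{\mathcal{D}(\theta(t))^\alpha}\ =:\ F\big(\mathcal{D}(\theta(t))\big),
\]
and one notes that the hypothesis $\kappa>\frac{\mathcal{D}(\nu)\mathcal{D}_0^\alpha(1-\beta)}{(\sin\mathcal{D}_0)(2-2^\beta)}$ is exactly equivalent to $F(\mathcal{D}_0)<0$.

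To conclude, since $F$ is continuous on $(0,\pi)$, choose $\delta\in(0,\min\{\mathcal{D}_0,\pi-\mathcal{D}_0\})$ with $F<0$ on $[\mathcal{D}_0-\delta,\mathcal{D}_0+\delta]$, and set $t^\ast:=\sup\{T\ge0:\mathcal{D}(\theta(t))\le\mathcal{D}_0\ \text{for all}\ t\in[0,T]\}$, which is well defined because $\mathcal{D}(\theta(0))=\mathcal{D}_0$. If $t^\ast<\infty$, then $\mathcal{D}(\theta(t^\ast))=\mathcal{D}_0$ by continuity, and there is $\varepsilon>0$ with $\mathcal{D}(\theta(t))\in[\mathcal{D}_0-\delta,\mathcal{D}_0+\delta]\subset(0,\pi)$ on $[t^\ast,t^\ast+\varepsilon]$; there the displayed inequality gives $\frac{\di}{\di t}\mathcal{D}(\theta(t))\le F(\mathcal{D}(\theta(t)))<0$ a.e., whence $\mathcal{D}(\theta(t))\le\mathcal{D}(\theta(t^\ast))=\mathcal{D}_0$ on that interval by absolute continuity, contradicting the maximality of $t^\ast$. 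Therefore $t^\ast=\infty$, which is the claim.

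I expect the actual work to lie in the second step — re‑running the argmax/argmin differentiation of Lemma~\ref{L3.1}–Remark~\ref{R3.2} and the $\int_\Omega$‑splitting estimate of Theorem~\ref{T3.2}(ii) while carrying the extra $\mathcal{D}(\nu)$ term — though this is essentially mechanical. The conceptual subtlety, and the reason the continuity argument is used instead of a global Grönwall/ODE‑comparison bound, is that $F$ is \emph{not} sign‑definite on $(0,\mathcal{D}_0]$: indeed $F(D)\to\mathcal{D}(\nu)>0$ as $D\to0^+$, so the diameter may temporarily grow from small values; it is only the localized behavior of $F$ near $D=\mathcal{D}_0$ that prevents $\mathcal{D}(\theta(t))$ from ever crossing $\mathcal{D}_0$.
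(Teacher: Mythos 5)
Your proposal is correct and follows essentially the same approach as the paper's proof: the paper also obtains a differential inequality for $\mathcal{D}(\theta)$ by re-running the Theorem~\ref{T3.2}(ii) estimates with the extra $\mathcal{D}(\nu)$ term, and then argues by contradiction at the first time the diameter would reach $\mathcal{D}_0$, using the hypothesis on $\kappa$ to force a strictly negative derivative there. The only cosmetic differences are in bookkeeping: the paper states the inequality in the linearized form $\frac{\di}{\di t}\mathcal{D}(\theta)\le \mathcal{D}(\nu)-\kappa\frac{\sin\mathcal{D}_0}{\mathcal{D}_0^{1+\alpha}}\frac{2-2^\beta}{1-\beta}\,\mathcal{D}(\theta)$ (valid while $\mathcal{D}(\theta)\le\mathcal{D}_0$), whereas you keep the nonlinear comparison $F(D)=\mathcal{D}(\nu)-\kappa\frac{2-2^\beta}{1-\beta}\frac{\sin D}{D^\alpha}$; the two agree at $D=\mathcal{D}_0$, which is all the contradiction needs. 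Your "$t^\ast=\sup\{\,T:\dots\}$" formulation together with the explicit appeal to absolute continuity is arguably a cleaner packaging of the paper's "first time $t_1$" argument, and your closing observation about the sign behavior of $F$ near $D=0$ correctly identifies why a localized barrier argument, rather than a global ODE comparison, is what is needed here.
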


\begin{proof}
	Note that we have 
	\[	\sup\limits_{0\le t<\infty}	\sup\limits_{x\in\Omega} \Big( \nu(x)+\kappa \displaystyle\int_{\Omega} \psi(x,y) h( \theta(y) - \theta(x))\di y \Big) \le \sup\limits_{x\in\Omega}\nu(x)+C_{\psi}\kappa.\]
	Suppose there exists first time $t_1 >0$ such that
	\begin{align} \label{C-7-00}
	\mathcal{D}(\theta(t_1)) = \mathcal{D}_0  \quad \mathrm{and} \quad \mathcal{D}(\theta(t))> \mathcal{D}_0 \quad \mathrm{for} \quad t \in (t_1, t_1 +\delta) \quad \mathrm{and} \ \mathrm{some} \ \delta>0. 
\end{align}		
	By Lemma \ref{L2.4}, for $t\leq t_1$ we have 
\[
		\frac{\sin\big( \theta(y) - \theta(x_m) \big)}{|\theta(y) - \theta(x_m)|^{\alpha}_o} \geq \frac{\sin\big( \mathcal{D}(\theta) \big)}{(\mathcal{D}(\theta))^{\alpha+1}} \big( \theta(y) - \theta(x_m) \big) \geq \frac{\sin\big( \mathcal{D}_0 \big)}{\mathcal{D}_0^{\alpha+1}} \big( \theta(y) - \theta(x_m) \big),
\]
	and
	\begin{align*}
	\begin{aligned}
		&\frac{\sin\big( \theta(y) - \theta(x_M) \big)}{|\theta(y) - \theta(x_M)|^{\alpha}_o} = 
		-\frac{\sin\big( \theta(x_M) - \theta(y) \big)}{|\theta(x_M) - \theta(y)|^{\alpha}_o}  \\
		&\hspace{1cm} \leq \frac{\sin\big(\mathcal{D}(\theta) \big)}{(\mathcal{D}(\theta))^{\alpha+1}} \big( \theta(y) - \theta(x_M) \big) 
		\leq \frac{\sin\big(\mathcal{D}_0 \big)}{\mathcal{D}_0^{\alpha+1}} \big( \theta(y) - \theta(x_M) \big).
	\end{aligned}
	\end{align*}
	We combine the above two estimates and a similar proof of \eqref{C-7} to see
	\begin{align*}
		& \frac{\di}{\di t} \mathcal{D}(\theta) = \nu(x_M)-\nu(x_m) \\
		&\hspace{0.2cm} +\kappa \displaystyle\int_{\Omega} \psi(x_M, y) \frac{\sin\big( \theta(y) - \theta(x_M) \big)}{|\theta(y) - \theta(x_M)|_o^{\alpha}}\di y - \kappa \displaystyle\int_{\Omega} \psi(x_m, y) \frac{\sin\big( \theta(y) - \theta(x_m) \big)}{|\theta(y) - \theta(x_m)|^{\alpha}_o} \di y \\
		&\leq \mathcal{D}(\nu)+ \kappa \frac{\sin \mathcal{D}_0}{\mathcal{D}_0^{\alpha+1}} \Big[ \int_{\Omega} \psi(x_M, y) \big( \theta(y) - \theta(x_M) \big) \di y- \int_{\Omega} \psi(x_m, y) \big( \theta(y) - \theta(x_m) \big) \di y \Big] \\
		&\leq \mathcal{D}(\nu)+ \kappa  \frac{\sin \mathcal{D}_0}{\mathcal{D}_0^{\alpha+1}} \Big[  \int_{\Omega} (\psi(x_M, y) - \psi(x_m, y))  \big( \theta(y) - \theta(x_M) \big) \di y- \int_{\Omega} \psi(x_m, y)  \big( \theta(x_M) - \theta(x_m) \big) \di y \Big] \\
		&=\mathcal{D}(\nu)+  \kappa \frac{\sin \mathcal{D}_0}{\mathcal{D}_0^{\alpha+1}}  \Big[ \int_{|x_M - y|\geq |x_m - y|} (\psi(x_M, y) - \psi(x_m, y))  \big( \theta(y) - \theta(x_M) \big) \di y \\
		&\hspace{0.2cm}+  \int_{|x_M - y|< |x_m - y|} (\psi(x_M, y) - \psi(x_m, y))  \big( \theta(y) - \theta(x_M) \big) \di y - \int_{\Omega} \psi(x_m, y)  \big( \theta(x_M) - \theta(x_m) \big) \di y \Big] \\
		&\leq \mathcal{D}(\nu)+  \kappa \frac{\sin \mathcal{D}_0}{\mathcal{D}_0^{\alpha+1}} \Big[  \int_{|x_M - y|\geq |x_m - y|} (\psi(x_m, y) - \psi(x_M, y))  \big( \theta(x_M) - \theta(y) \big) \di y - \mathcal{D}(\theta) \displaystyle\int_{\Omega} \psi(x_m, y) \di y \Big] \\
		&\leq \mathcal{D}(\nu)+  \kappa \frac{\sin \mathcal{D}_0}{\mathcal{D}_0^{\alpha+1}}  \mathcal{D}(\theta) \Big [ \int_{|x_M - y|\geq |x_m - y|} (\psi(x_m, y) - \psi(x_M, y)) \di y - \min_{x\in \Omega}\displaystyle\int_{\Omega} \psi(x, y)  \di y \Big] .
	\end{align*}
	With Lemma \ref{L2.5} and \eqref{B-10}, one has 
	\[ 
	\frac{\di}{\di t} \mathcal{D}(\theta) \leq \mathcal{D}(\nu)+   \kappa \frac{\sin \mathcal{D}_0}{\mathcal{D}_0^{\alpha+1}}  \left(  \frac{2^\beta -1}{1-\beta} - \frac{1}{1-\beta}\right) \mathcal{D}(\theta) \leq \mathcal{D}(\nu)-  \kappa \frac{\sin \mathcal{D}_0}{\mathcal{D}_0^{\alpha+1}}   \left(  \frac{2 -2^\beta}{1-\beta}\right) \mathcal{D}(\theta),
	\]
i.e.,
\begin{equation} \label{new419}
			\frac{\di}{\di t} \mathcal{D}(\theta) \Big\vert_{t=t_1} \leq \mathcal{D}(\nu)-  \kappa \frac{\sin \mathcal{D}_0}{\mathcal{D}_0^{\alpha+1}}   \left(  \frac{2 -2^\beta}{1-\beta}\right) \mathcal{D}_0 <0,
	\end{equation}
	where we use the condition on $\kappa$. This contradicts the assumption \eqref{C-7-00} and proves the claim. 
\end{proof}

Next, we present the main result on the practical synchronization as follows. 
\begin{theorem}  \label{T3.3}
Suppose that system parameters, natural frequency function and initial data satisfy 
\[  0 <  \alpha, \beta < 1,  \quad \Omega = [0, 1], \quad  \theta^{\mathrm{in}} \in L^2(\Omega), \quad  \nu \in B(\Omega), \quad 0<\mathcal{D}_0 <\pi,
\]
and let $\theta = \theta(t,x)$ be a global solution to \eqref{A-2}.  Then for $\varepsilon \in (0, 1)$, there exists a positive constant $\kappa_*(\varepsilon)$  such that if 
\begin{equation} \label{C-7-1}
\kappa>\kappa_*(\varepsilon):= \max \left\lbrace \frac{\mathcal{D}(\nu) \mathcal{D}_0^{\alpha+1} (1-\beta)}{\varepsilon  (\sin \mathcal{D}_0) (2-2^\beta)} , ~ \frac{\mathcal{D}(\nu) \mathcal{D}_0^\alpha (1-\beta)}{(\sin \mathcal{D}_0) (2-2^\beta)} \right\rbrace,
\end{equation}
there exists some finite time $t_* >0$ such that
\begin{align*}
\mathcal{D}(\theta(t)) <\varepsilon \quad \forall~t\geq t_*.
\end{align*}
\end{theorem}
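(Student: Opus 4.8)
The plan is to adapt the differential-inequality argument from Theorem~\ref{T3.2} and Lemma~\ref{L3.3} to the nonidentical case, but now tracking a more refined bound on $\frac{\di}{\di t}\mathcal{D}(\theta)$ that retains a linear dissipation term. First I would invoke Lemma~\ref{L3.3}, whose hypotheses hold because $\kappa>\kappa_*(\varepsilon)\ge \frac{\mathcal{D}(\nu)\mathcal{D}_0^\alpha(1-\beta)}{(\sin\mathcal{D}_0)(2-2^\beta)}$, to guarantee $\mathcal{D}(\theta(t))\le\mathcal{D}_0<\pi$ for all $t\ge0$; this keeps all phase differences in the regime where the monotonicity estimates of Lemma~\ref{L2.4} and Remark~\ref{R2.1} apply. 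Then, using Lemma~\ref{L3.1} and Remark~\ref{R3.2} to select maximizing/minimizing indices $x_M\in M(t)$, $x_m\in m(t)$, I would repeat verbatim the chain of estimates displayed in the proof of Lemma~\ref{L3.3} (the lower bounds on the two singular force integrals via $g(\theta)=\frac{\sin\theta}{\theta^{1+\alpha}}$ nonincreasing on $(0,\pi)$, the splitting of the integral over $\{|x_M-y|\ge|x_m-y|\}$ and its complement, the sign condition $\psi(x_m,y)-\psi(x_M,y)\le0$ on the complement, and finally Lemma~\ref{L2.5} together with \eqref{B-10}) to arrive at
\[
\frac{\di}{\di t}\mathcal{D}(\theta(t))\ \le\ \mathcal{D}(\nu)-\kappa\,\frac{\sin\mathcal{D}_0}{\mathcal{D}_0^{\alpha+1}}\,\frac{2-2^\beta}{1-\beta}\,\mathcal{D}(\theta(t)),\qquad \text{a.e. } t>0,
\]
which is exactly the estimate already derived in Lemma~\ref{L3.3} before specializing to $t=t_1$, so no new computation is needed.

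Write $A:=\kappa\,\frac{\sin\mathcal{D}_0}{\mathcal{D}_0^{\alpha+1}}\,\frac{2-2^\beta}{1-\beta}>0$; the inequality reads $\frac{\di}{\di t}\mathcal{D}(\theta)\le\mathcal{D}(\nu)-A\,\mathcal{D}(\theta)$. By Gr\"onwall's lemma applied to this linear differential inequality,
\[
\mathcal{D}(\theta(t))\ \le\ \frac{\mathcal{D}(\nu)}{A}+\Bigl(\mathcal{D}_0-\frac{\mathcal{D}(\nu)}{A}\Bigr)e^{-At}\ \le\ \frac{\mathcal{D}(\nu)}{A}+\mathcal{D}_0\,e^{-At}.
\]
The first condition in \eqref{C-7-1}, namely $\kappa>\frac{\mathcal{D}(\nu)\mathcal{D}_0^{\alpha+1}(1-\beta)}{\varepsilon(\sin\mathcal{D}_0)(2-2^\beta)}$, is precisely $\frac{\mathcal{D}(\nu)}{A}<\varepsilon$; hence there exists $\varepsilon'\in(0,\varepsilon)$ with $\frac{\mathcal{D}(\nu)}{A}\le\varepsilon'$, and choosing
\[
t_*:=\frac{1}{A}\log\!\frac{\mathcal{D}_0}{\varepsilon-\varepsilon'}
\]
gives $\mathcal{D}_0 e^{-At}<\varepsilon-\varepsilon'$ for all $t\ge t_*$, so $\mathcal{D}(\theta(t))<\varepsilon$ for all $t\ge t_*$, as claimed.

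I expect the only genuinely delicate point to be the rigorous justification that $\frac{\di}{\di t}\mathcal{D}(\theta(t))$ may be evaluated along the selected extremal indices $x_M,x_m$ for a.e.\ $t$ — i.e.\ the measure-theoretic bookkeeping via the approximating compact sets $\Omega_n$ of Lemma~\ref{L3.1} and the Danskin/Scorza-Dragoni machinery (Lemmas~\ref{L2.6}--\ref{L2.9}) — since the solution is only $L^2$-valued and not pointwise regular; but this is identical to what was already done in Lemma~\ref{L3.3} and Theorem~\ref{T3.2}, so it can be quoted rather than redone. The remaining steps are elementary: the a~priori diameter bound from Lemma~\ref{L3.3} is what allows the singular kernels to be controlled uniformly, and the final extraction of $t_*$ is just solving an exponential inequality.
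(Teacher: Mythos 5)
Your proposal is correct and follows essentially the same route as the paper's proof: invoke Lemma~\ref{L3.3} for the uniform diameter bound $\mathcal{D}(\theta(t))\le\mathcal{D}_0<\pi$, quote the differential inequality $\frac{\di}{\di t}\mathcal{D}(\theta)\le\mathcal{D}(\nu)-A\,\mathcal{D}(\theta)$ already derived in its proof, integrate it by Gr\"onwall/comparison, and use the first condition in \eqref{C-7-1} to make the steady-state term $\mathcal{D}(\nu)/A<\varepsilon$ so that the exponential term eventually pushes the bound below $\varepsilon$. The only cosmetic deviation is that the paper separately disposes of the trivial case $\mathcal{D}_0<\varepsilon$ with $t_*=0$ whereas you absorb it into the same Gr\"onwall estimate, and your explicit $t_*$ via the auxiliary $\varepsilon'$ is just the quantitative version the paper records in Remark~\ref{R3.4}.
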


\begin{proof} Let $\varepsilon \in (0, 1)$ be an arbitrary constant and we assume that 
\[ \kappa> \kappa_*(\varepsilon). \]
By Lemma \ref{L3.3}, we have
\begin{equation} \label{C-8}
\sup_{0 \leq t < \infty} \mathcal{D}(\theta(t)) \leq \mathcal{D}_0 <\pi.
\end{equation}
Now, we consider two cases:
\[  \mbox{Either} \quad \mathcal{D}_0 < \varepsilon \quad \mbox{or} \quad \mathcal{D}_0 \geq \varepsilon.  \]
\noindent $\bullet$~Case A ($ \mathcal{D}_0 < \varepsilon$): It follows from \eqref{C-8} that 
\[  t_* = 0, \quad \mathcal{D}(\theta(t)) \leq   \mathcal{D}_0 < \varepsilon, \quad \forall~t \geq t_* = 0, \]
which gives the desired estimate. \newline

\noindent $\bullet$~Case B ($\mathcal{D}_0 \geq \varepsilon$): As in \eqref{new419}, we have
\begin{equation} \label{C-9}
\frac{\di}{\di t} \mathcal{D}(\theta (t)) \leq  \mathcal{D}(\nu)-      \kappa \frac{\sin \mathcal{D}_0}{\mathcal{D}_0^{\alpha+1}}  \left(  \frac{2 -2^\beta}{1-\beta}\right) \mathcal{D}(\theta (t)).
\end{equation}
Now, we consider the corresponding linear differential equation:
\[ 
\begin{cases}
\displaystyle \frac{\di}{\di t}y(t)=\mathcal{D}(\nu)-  \kappa \frac{\sin \mathcal{D}_0}{\mathcal{D}_0^{\alpha+1}} \left(  \frac{2 -2^\beta}{1-\beta}\right) y(t), \quad t > 0, \\
\displaystyle y \Big|_{t = 0} = \mathcal{D}_0.
\end{cases}
\]
This yields
\begin{align}
\begin{aligned} \label{C-10}
y(t) = \left(  \mathcal{D}_0- \frac{\mathcal{D}(\nu) \mathcal{D}_0^{\alpha +1}(1-\beta)}{\kappa (\sin \mathcal{D}_0) (2-2^\beta)} \right) \exp \left( - \kappa \frac{\sin \mathcal{D}_0}{\mathcal{D}_0^{\alpha+1}} \left(  \frac{2 -2^\beta}{1-\beta}\right)  t \right) + \frac{\mathcal{D}(\nu) \mathcal{D}_0^{\alpha +1}(1-\beta)}{\kappa (\sin \mathcal{D}_0) (2-2^\beta)}.
\end{aligned}
\end{align}
By the comparison principle of ODE together with \eqref{C-9} and \eqref{C-10}, we have 
\begin{align} \label{D-4-14}
\begin{aligned}
\mathcal{D}(\theta(t)) &\leq  \left(  \mathcal{D}_0 - \frac{\mathcal{D}(\nu) \mathcal{D}_0^{\alpha +1}(1-\beta)}{\kappa (\sin \mathcal{D}_0) (2-2^\beta)} \right) \exp \left( - \kappa \frac{\sin \mathcal{D}_0 }{\mathcal{D}_0^{\alpha+1}} \left(  \frac{2 -2^\beta}{1-\beta}\right)  t \right)  \\
& \hspace{0.2cm} + \frac{\mathcal{D}(\nu) \mathcal{D}_0^{\alpha +1}(1-\beta)}{\kappa (\sin \mathcal{D}_0) (2-2^\beta)}.
\end{aligned}
\end{align}
On the other hand, it follows from the condition on $\kappa$ that 
\begin{equation} \label{D-4-14-1}
\varepsilon - \frac{\mathcal{D}(\nu) \mathcal{D}_0^{\alpha +1}(1-\beta)}{\kappa (\sin \mathcal{D}_0) (2-2^\beta)} >0.
\end{equation} 
Since 
\begin{align*}
\lim_{t\rightarrow \infty} \exp \left( -  \kappa  \frac{\sin \mathcal{D}_0}{\mathcal{D}_0^{\alpha+1}} \left(  \frac{2 -2^\beta}{1-\beta}\right)  t \right) = 0,
\end{align*}
there exists a sufficiently large  $t_*>0$ such that
\begin{equation} \label{D-4-15}
\left( \mathcal{D}_0 - \frac{\mathcal{D}(\nu) \mathcal{D}_0^{\alpha +1}(1-\beta)}{\kappa (\sin \mathcal{D}_0) (2-2^\beta)} \right) \exp \left( -  \kappa  \frac{\sin \mathcal{D}_0}{\mathcal{D}_0^{\alpha+1}} \left(  \frac{2 -2^\beta}{1-\beta}\right)  t \right) \leq \varepsilon - \frac{\mathcal{D}(\nu) \mathcal{D}_0^{\alpha +1}(1-\beta)}{\kappa (\sin \mathcal{D}_0) (2-2^\beta)}, \quad t\geq t_*.
\end{equation}
Then, for $t \geq t_*$, we combine \eqref{D-4-14} and \eqref{D-4-15} to obtain
\[
\mathcal{D} (\theta(t))\leq  \varepsilon -  \frac{\mathcal{D}(\nu) \mathcal{D}_0^{\alpha +1}(1-\beta)}{\kappa (\sin \mathcal{D}_0) (2-2^\beta)} + \frac{\mathcal{D}(\nu) \mathcal{D}_0^{\alpha +1}(1-\beta)}{\kappa (\sin \mathcal{D}_0) (2-2^\beta)} = \varepsilon, \quad t\geq t_*.
\]
\end{proof}

\begin{remark} \label{R3.4}
\begin{enumerate} Below, we provide several comments on the result of Theorem \ref{T3.3}. 
\item[(i)]
For Case B in the proof of Theorem \ref{T3.3}:
\begin{equation} \label{D-4-15-1}
 \mathcal{D}_0 \geq \varepsilon,
\end{equation}
we can provide a quantitative estimate for $t_* >0$ as follows. The assumption on the coupling strength yields
\[ \varepsilon > \frac{\mathcal{D}(\nu) \mathcal{D}_0^{\alpha +1}(1-\beta)}{ \kappa (\sin \mathcal{D}_0) (2-2^\beta)}. \]
Therefore, with this and \eqref{D-4-15-1} it follows
\[ 
{\mathcal D}_0  > \frac{\mathcal{D}(\nu) \mathcal{D}_0^{\alpha +1}(1-\beta)}{ \kappa (\sin \mathcal{D}_0) (2-2^\beta)}.
\]
Note that 
\begin{align*}
\begin{aligned}
& \exp \left[  - \kappa \frac{\sin \mathcal{D}_0}{\mathcal{D}_0^{\alpha+1}} \left(  \frac{2 -2^\beta}{1-\beta}\right)  t \right ] \leq \frac{\varepsilon - \frac{\mathcal{D}(\nu) \mathcal{D}_0^{\alpha +1}(1-\beta)}{\kappa (\sin \mathcal{D}_0) (2-2^\beta)}}{ \mathcal{D}_0 - \frac{\mathcal{D}(\nu) \mathcal{D}_0^{\alpha +1}(1-\beta)}{\kappa (\sin \mathcal{D}_0) (2-2^\beta)} } \\
& \hspace{0.5cm} \iff \quad  t\geq t_*:= \frac{\mathcal{D}_0^{\alpha+1}(1-\beta)}{\kappa (\sin \mathcal{D}_0) (2-2^\beta)} \ln \left(  \frac{\mathcal{D}_0 - \frac{\mathcal{D}(\nu) \mathcal{D}_0^{\alpha +1}(1-\beta)}{\kappa (\sin \mathcal{D}_0 ) (2-2^\beta)}}{\varepsilon - \frac{\mathcal{D}(\nu) \mathcal{D}_0^{\alpha +1}(1-\beta)}{\kappa (\sin \mathcal{D}_0) (2-2^\beta)}}  \right).
\end{aligned}
\end{align*}
\item[(ii)]
The corresponding discrete singular Kuramoto model with $\psi \equiv 1$ is analyzed in references \cite{P-P-J2021, Poyato2019}. They established asymptotic synchronization by specific structure and ordering of the discrete model, which does not carry over to our continuous setting. On the other hand, the authors in \cite{KoHa2025} obtain practical synchronization for SCKM with $\alpha = 0$ under stronger assumptions on the initial diameter and the coupling strength. This further highlights the crucial sticking role played by the singularity of $h$.
\end{enumerate}
\end{remark}

\vspace{0.5cm}

Next, we provide an interesting property of the SCKM \eqref{A-2} which are similar to order preservation results in the Kuramoto model.
\begin{proposition} \label{P3.3}
\emph{(Propagation of monotonicity)}
Suppose that the natural frequency function and initial data 
\begin{align*}
\nu : \Omega \to \bbr \quad \mathrm{bounded} \ \mathrm{and} \quad \theta^\mathrm{in} : \Omega \to \bbr \quad \mathrm{on} \quad  \Omega = [0, 1],
\end{align*}
are square-integrable and defined pointwise. Moreover, suppose
\begin{align*}
\begin{aligned}
& 0 <  \alpha, \beta < 1, \quad  \kappa > \frac{\mathcal{D}(\nu) \mathcal{D}_0^\alpha (1-\beta)}{(\sin\mathcal{D}_0)(2-2^\beta)}, \quad 0<\mathcal{D}_0 <\pi  \\
&  (\nu(y) -\nu(x)) (y - x) > 0 \quad \mathrm{and} \quad  (\theta^{\mathrm{in}}(y) -\theta^{\mathrm{in}}(x)) (y - x) > 0, \quad \forall~x \neq y,
\end{aligned}
\end{align*}
and let $\theta = \theta(t,x)$ be a global solution to \eqref{A-2}. Then for $t > 0$, $\theta(t, \cdot)$ is strictly increasing in the sense that 
\[ (\theta(t,y) -\theta(t,x)) (y - x) > 0, \quad \forall~x \neq y. \]
\end{proposition}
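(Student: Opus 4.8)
The plan is to exploit the fact that the singular weight $\psi(x,y)=|x-y|^{-\beta}$ is \emph{strictly decreasing in the distance} $|x-y|$, combined with the a priori diameter bound. First I would note that the lower bound imposed on $\kappa$ is exactly the hypothesis of Lemma \ref{L3.3}, so $\sup_{t\ge0}\mathcal{D}(\theta(t))\le\mathcal{D}_0<\pi$; consequently for every pair of points the phase difference $\theta(t,y)-\theta(t,x)$ remains in $(-\pi,\pi)$, where $|\cdot|_o=|\cdot|$ and $\mathrm{sgn}\, h(\phi)=\mathrm{sgn}(\sin\phi)=\mathrm{sgn}(\phi)$. By Remark \ref{R3.1} we may assume the data take values in an arc of length $\mathcal{D}_0$, hence are bounded, and I would work with the pointwise solution obtained as the limit of the regularized solutions $\theta_\varepsilon$ of \eqref{C-4}, which are genuinely pointwise and $C^1$ in $t$ by Picard iteration in $C([0,T];B(\Omega))$ (using that $h_\varepsilon$ is Lipschitz and $\int_\Omega\psi_\varepsilon(x,y)\,\di y\le C_\psi$); the diameter bound $\mathcal{D}(\theta_\varepsilon(t))\le\mathcal{D}_0$ holds uniformly in $\varepsilon$ by the same argument as Lemma \ref{L3.3}.

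The crux is the following sign computation. Suppose that at some time $s$ the profile $\theta(s,\cdot)$ is nondecreasing and constant, say equal to $c$, on an interval $[a,b]$ with $a<b$. Then, since $\theta(s,a)=\theta(s,b)=c$,
\begin{align*}
\frac{\di}{\di t}\big(\theta(t,b)-\theta(t,a)\big)\Big|_{t=s}=\big(\nu(b)-\nu(a)\big)+\kappa\int_0^1\big(\psi(b,w)-\psi(a,w)\big)\,h\big(\theta(s,w)-c\big)\,\di w.
\end{align*}
Splitting the integral over $[0,a]$, $[a,b]$, $[b,1]$: on $[a,b]$ the integrand vanishes because $h(0)=0$; on $[0,a]$ one has $w<a<b$, so $\psi(b,w)-\psi(a,w)<0$ while $\theta(s,w)-c\in(-\pi,0]$ forces $h(\theta(s,w)-c)\le0$, whence the product is $\ge0$; on $[b,1]$ both factors are $\ge0$ by the mirror argument. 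Hence the integral is nonnegative and the derivative is $\ge\nu(b)-\nu(a)>0$. From this the statement follows in two steps. Propagation of \emph{non-strict} monotonicity: with $\mathcal{P}(t):=\sup_{0\le a\le b\le1}(\theta(t,a)-\theta(t,b))\ge0$ one has $\mathcal{P}(0)=0$, and whenever $\mathcal{P}(t)=0$ every maximizer $(a,b)$ with $a<b$ satisfies $\theta(t,\cdot)\equiv\theta(t,a)$ on $[a,b]$, so the computation above gives $\partial_t(\theta(t,a)-\theta(t,b))<0$ there, while the diagonal maximizers give derivative $0$; a Danskin-type argument (Lemmas \ref{L2.6}--\ref{L2.9}, exactly as in Lemma \ref{L3.1}, after the Scorza--Dragoni reduction to compact sets $\Omega_n$) then yields $\frac{\di}{\di t}\mathcal{P}(t)=0$ whenever $\mathcal{P}(t)=0$, hence $\mathcal{P}\equiv0$ and $\theta(t,\cdot)$ is nondecreasing for all $t$. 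Strictness: fix $x<y$ and let $D(t)=\theta(t,y)-\theta(t,x)$; then $D(0)=\theta^{\mathrm{in}}(y)-\theta^{\mathrm{in}}(x)>0$ and $D$ is continuous, so $D>0$ on $[0,\delta)$, and if $t_0$ is the first zero of $D$ then $t_0>0$ and, by nondecreasingness, $\theta(t_0,\cdot)\equiv\theta(t_0,x)$ on $[x,y]$, so the computation gives $D'(t_0)\ge\nu(y)-\nu(x)>0$, contradicting $D'(t_0)\le0$ (since $D>0$ before $t_0$ and $D(t_0)=0$). Hence $D(t)>0$ for all $t$.

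The sign computation is elementary, and it is precisely where the three hypotheses enter: the monotonicity of $\psi$ in the distance fixes the sign of $\psi(b,\cdot)-\psi(a,\cdot)$ on each of the two outer intervals, the bound $\mathcal{D}_0<\pi$ together with the lower bound on $\kappa$ (through Lemma \ref{L3.3}) fixes the sign of $h$, and the strict monotonicity of $\nu$ produces $\nu(b)>\nu(a)$. Accordingly, I expect the only genuine difficulty to be regularity bookkeeping: since the SCKM solution a priori belongs only to $C((0,T);L^2(\Omega))$, one must fix a pointwise representative for which \eqref{A-2} holds pointwise in $t$ and $t\mapsto\theta(t,x)$ is $C^1$ — this is why the boundedness of the data (Remark \ref{R3.1}) and the regularized approximation are used, and why in a fully rigorous treatment one should run the two preceding steps on $\theta_\varepsilon$ first and then pass to the limit $\varepsilon\to0$ (which yields the non-strict monotonicity of $\theta$, after which the strictness step is repeated directly for $\theta$) — and one must transport the Scorza--Dragoni/Danskin reduction to the two-variable supremum $\mathcal{P}$ on the compact sets $\Omega_n$, exactly as in the proof of Lemma \ref{L3.1}.
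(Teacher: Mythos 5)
Your route is genuinely different from the paper's. The paper fixes a pair $x_1<x_2$ and directly derives a differential inequality
$\partial_t\bigl(\theta(t,x_2)-\theta(t,x_1)\bigr)\ge \nu(x_2)-\nu(x_1)-6\kappa\frac{2^\beta-1}{1-\beta}\,|\theta(t,x_1)-\theta(t,x_2)|^{1-\alpha}$
by splitting $\Omega$ into the value-sets $A,B,C$ and using the H\"older bound on $h$ — an argument that makes no use of global monotonicity of $\theta(t,\cdot)$; at a first collision time $t_*$ the last term vanishes, giving $\partial_t(\theta(x_2)-\theta(x_1))\vert_{t_*}\ge\nu(x_2)-\nu(x_1)>0$ and a contradiction. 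You instead propose to first propagate \emph{non-strict} monotonicity of the whole profile and only then upgrade to strictness. Your sign computation at a time when $\theta(t,\cdot)$ is globally nondecreasing and flat on $[a,b]$ is clean and correct, and the final strictness step is also correct \emph{conditional} on non-strict monotonicity having been established for all times.

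The gap is in the non-strict step. You assert that $\tfrac{\di}{\di t}\mathcal{P}(t)=0$ whenever $\mathcal{P}(t)=0$, and conclude ``hence $\mathcal{P}\equiv 0$.'' That implication does not hold: $\mathcal{P}$ is nonnegative with $\mathcal{P}(0)=0$, and a nonnegative Lipschitz function can vanish together with its (Danskin) derivative on its zero set and nevertheless leave zero — compare $t\mapsto t^2$, which satisfies $P(0)=0$, $P\ge0$, and $P'=0$ wherever $P=0$. To conclude $\mathcal{P}\equiv 0$ you would need a one-sided estimate on $\mathcal{P}'$ that also holds when $\mathcal{P}(t)>0$ (e.g.\ $\mathcal{P}'\le C\mathcal{P}$ plus Gr\"onwall), but your sign computation is precisely unavailable there, because it relies on the global nondecreasingness of $\theta(t,\cdot)$, which fails exactly when $\mathcal{P}(t)>0$. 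The usual first-exit-time argument does not close the loop either: $\{t:\mathcal{P}(t)>0\}$ is open so there is no first such time, and at the right endpoint of a maximal interval on which $\mathcal{P}\equiv 0$ you again only know that the Danskin derivative is zero there. This circularity — needing monotonicity at time $t$ to prove monotonicity at time $t$ — is exactly what the paper's per-pair argument with the H\"older remainder is designed to sidestep, and it is the missing ingredient in your proposal.
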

\begin{proof}
Due to the assumption on the coupling strength and initial diameter, we can apply Lemma \ref{L3.3} to obtain
\begin{align*}
\mathcal{D}(\theta(t)) < \pi \quad \mathrm{for} \ t\geq 0.
\end{align*}
For $t > 0$, let $x_1,x_2 \in \Omega$ such that 
\[  x_1 < x_2. \]
Then, we define the subsets of $\Omega$:
\begin{align*}
\begin{aligned}
&A(t):= \lbrace y \in \Omega : \theta (t,y)\leq \theta(t,x_2) \rbrace, \quad B(t):= \lbrace y \in \Omega : \theta (t,y)\geq \theta(t,x_1) \rbrace, \\
&C(t) := \left\lbrace y \in \Omega : \frac{\sin (\theta(t,y) - \theta(t,x_2))}{|\theta(t,y) - \theta(t,x_2)|_o^\alpha} - \frac{\sin (\theta(t,y) - \theta(t,x_1))}{|\theta(t,y) - \theta(t,x_1)|_o^\alpha} \leq 0 \right\rbrace.
\end{aligned}
\end{align*}
By the monotonicity of $\theta^{\mathrm{in}}$, we have
\[    \theta^{\mathrm{in}}(x_1) < \theta^{\mathrm{in}} (x_2). \]
 Moreover, we combine  \eqref{B-9} with Lemma \ref{L2.3} to find
\begin{align*}
\begin{aligned}
&\partial_t (\theta (x_2) - \theta (x_1)) = \nu(x_2) - \nu(x_1)  \\
& \hspace{0.5cm} + \kappa \int_\Omega \psi(x_2,y) \frac{\sin (\theta(y) - \theta(x_2))}{|\theta(y) - \theta(x_2)|^\alpha_o} \di y - \kappa \int_\Omega \psi(x_1,y) \frac{\sin (\theta(y) - \theta(x_1))}{|\theta(y) - \theta(x_1)|^\alpha_o} \di y \\
& \hspace{0.3cm} = \nu(x_2) - \nu(x_1) + \kappa \int_A \psi(x_2,y) \frac{\sin (\theta(y) - \theta(x_2))}{|\theta(y) - \theta(x_2)|_o^\alpha} \di y \\
& \hspace{0.5cm}- \kappa \int_B \psi(x_1,y) \frac{\sin (\theta(y) - \theta(x_1))}{|\theta(y) - \theta(x_1)|_o^\alpha} \di y + \kappa \int_{\Omega\setminus A} \psi(x_2,y) \frac{\sin (\theta(y) - \theta(x_2))}{|\theta(y) - \theta(x_2)|_o^\alpha} \di y \\
& \hspace{0.5cm}- \kappa \int_{\Omega \setminus B} \psi(x_1,y) \frac{\sin (\theta(y) - \theta(x_1))}{|\theta(y) - \theta(x_1)|_o^\alpha} \di y \\
&  \hspace{0.3cm} \geq \nu(x_2) - \nu(x_1) \\
&  \hspace{0.5cm} + \kappa \int_{A\cap B} (\psi(x_1,y) + \psi (x_2,y)) \left(\frac{\sin (\theta(y) - \theta(x_2))}{|\theta(y) - \theta(x_2)|_o^\alpha} - \frac{\sin (\theta(y) - \theta(x_1))}{|\theta(y) - \theta(x_1)|_o^\alpha} \right) \di y \\
& \hspace{0.5cm} + \kappa \int_{A\setminus B} \psi(x_2,y) \frac{\sin (\theta(y) - \theta(x_2))}{|\theta(y) - \theta(x_2)|^\alpha_o} \di y - \kappa \int_{B \setminus A} \psi(x_1,y) \frac{\sin (\theta(y) - \theta(x_1))}{|\theta(y) - \theta(x_1)|^\alpha_o} \di y \\
& \hspace{0.5cm} + \kappa \int_{\Omega\setminus A} \psi(x_2,y) \frac{\sin (\theta(y) - \theta(x_2))}{|\theta(y) - \theta(x_2)|_o^\alpha} \di y - \kappa \int_{\Omega \setminus B} \psi(x_1,y) \frac{\sin (\theta(y) - \theta(x_1))}{|\theta(y) - \theta(x_1)|_o^\alpha} \di y \\
&  \hspace{0.3cm} \geq  \nu(x_2) - \nu(x_1) \\
&  \hspace{0.5cm}+ \kappa \int_{A\cap B} (\psi(x_1,y) + \psi (x_2,y)) \left(\frac{\sin (\theta(y) - \theta(x_2))}{|\theta(y) - \theta(x_2)|_o^\alpha} - \frac{\sin (\theta(y) - \theta(x_1))}{|\theta(y) - \theta(x_1)|_o^\alpha} \right) \di y \\
& \hspace{0.5cm}  + \kappa \int_{A\setminus B} \psi(x_2,y) \frac{\sin (\theta(y) - \theta(x_2))}{|\theta(y) - \theta(x_2)|^\alpha_o} \di y - \kappa \int_{B \setminus A} \psi(x_1,y) \frac{\sin (\theta(y) - \theta(x_1))}{|\theta(y) - \theta(x_1)|^\alpha_o} \di y \\
& \hspace{0.5cm} + \kappa \int_{B\setminus A} \psi(x_2,y) \frac{\sin (\theta(y) - \theta(x_2))}{|\theta(y) - \theta(x_2)|^\alpha_o} \di y - \kappa \int_{A \setminus B} \psi(x_1,y) \frac{\sin (\theta(y) - \theta(x_1))}{|\theta(y) - \theta(x_1)|^\alpha_o} \di y \\
\end{aligned}
\end{align*}
\begin{align*}
\begin{aligned}
&  \hspace{0.3cm} \geq  \nu(x_2) - \nu(x_1) \\
& \hspace{0.5cm} + \kappa \int_{A\cap B} (\psi(x_1,y) + \psi (x_2,y)) \left(\frac{\sin (\theta(y) - \theta(x_2))}{|\theta(y) - \theta(x_2)|_o^\alpha} - \frac{\sin (\theta(y) - \theta(x_1))}{|\theta(y) - \theta(x_1)|_o^\alpha} \right) \di y \\
& \hspace{0.5cm}+ \kappa \int_{(A\setminus B)\cap C} (\psi(x_1,y) + \psi (x_2,y)) \left(\frac{\sin (\theta(y) - \theta(x_2))}{|\theta(y) - \theta(x_2)|_o^\alpha} - \frac{\sin (\theta(y) - \theta(x_1))}{|\theta(y) - \theta(x_1)|_o^\alpha} \right) \di y \\
& \hspace{0.5cm}+ \kappa \int_{(B\setminus A)\cap C} (\psi(x_1,y) + \psi (x_2,y)) \left(\frac{\sin (\theta(y) - \theta(x_2))}{|\theta(y) - \theta(x_2)|^\alpha_o} - \frac{\sin (\theta(y) - \theta(x_1))}{|\theta(y) - \theta(x_1)|^\alpha_o} \right) \di y  \\
&  \hspace{0.3cm} \geq  \nu(x_2) - \nu(x_1)  - 6\kappa \frac{2^\beta-1}{1-\beta} |\theta(x_1) - \theta(x_2)|^{1-\alpha},
\end{aligned}
\end{align*}
i.e., we have
\[
\partial_t (\theta (x_2) - \theta (x_1)) \geq  \nu(x_2) - \nu(x_1)  - 6\kappa \frac{2^\beta-1}{1-\beta} |\theta(x_1) - \theta(x_2)|^{1-\alpha}.
\]
Suppose that there exists first time $t_* >0$ such that
\begin{align*}
\theta (t,x_1)  < \theta (t,x_2) \quad \mathrm{for} \ t< t_*, \quad&\theta (t_*,x_2) =  \theta (t_*,x_1).
\end{align*}
This yields
\begin{equation} \label{C-29}
\partial_t (\theta (t,x_2) - \theta (t,x_1)) \Big\vert_{t=t_*} \geq \nu(x_2) - \nu(x_1).
\end{equation}
Since the natural frequency function $\nu$ is strictly increasing, we have
\begin{equation} \label{C-30}
\nu(x_2) - \nu(x_1)>0.
\end{equation}
Thus, it follows from \eqref{C-29} and \eqref{C-30} that $(\theta (t,x_2) - \theta (t,x_1))$ is in strictly increasing mode, i.e. there exists a constant $0 <  \delta \ll 1$ such that 
\[ \theta (t,x_1) > \theta (t,x_2), \quad t_*-\delta < t  < t_*, \]
which is contradictory to the choice of $t_*$. 
\end{proof}

\begin{remark}\label{R4.3}
Note that unlike in the finite-particle system, we cannot obtain a uniform lower bound on the derivative $\partial_t (\theta(t,x_2) - \theta(t,x_1))$ because it is possible that $\nu(x_2) - \nu(x_1) \rightarrow 0$ for $x_2 - x_1 \rightarrow 0$. That means that, although two oscillators never collide, their distance can become arbitrarily small if they start close. If we put some stronger assumptions on the function $\nu$, we can obtain a lower bound that depends on the distance between $x_1$ and $x_2$. For example, if we assume that $\nu$ is differentiable and has uniform growth, i.e. $\nu' \geq m >0$, we observe
\begin{align*}
\nu(x_2)- \nu(x_1) = \nu' (\xi) (x_2-x_1) \geq m(x_2-x_1),
\end{align*}
where $\xi\in (x_1,x_2)$. 
\end{remark}
\section{From particle model to the SCKM} \label{sec:4}
\setcounter{equation}{0}
In this section, we study the derivation of the SCKM from the particle model via the graph limit on the bounded domain $\Omega = [0, 1]$ equipped with the Lebesgue measure. \newline

\subsection{Construction of approximate solution} \label{sec:4.1}
Consider the particle model:
\begin{equation} \label{D-1}
\begin{cases}
		\dot{\theta}_i(t) = \nu_i + \displaystyle\frac{\kappa}{N} \sum_{k=1}^{N} \psi_{ik}^{(N)} h\big(\theta_{k}(t) - \theta_i(t) \big) ,\quad & t > 0, \\
		\theta_i(0)=\theta^{\mathrm{in}}_i, &i\in[N],
		\end{cases}
\end{equation}
and the continuum model:
\begin{equation} \label{D-2}
	\begin{cases}
		\partial_t \theta(t, x) = \nu(x) + \kappa \displaystyle\int_{\Omega} \psi(x, y) h\big( \theta(t, y) - \theta(t, x) \big)  \di y, & t > 0, \; x \in \Omega, \\
		\theta(0, x) = \theta^{\mathrm{in}}(x), & x \in \Omega.
	\end{cases}
\end{equation}
First, we partition the domain $\Omega$ into $N$ equal subintervals:
\[
I_1:= \big[0, 1/N \big], \quad I_i:=\big((i-1)/N,\, i/N\big],\quad i \in \{ 2,\dots,N \}, \quad [0, 1]  = \bigcup_{i=1}^{N} I_i.
\]
With this partition $\{ I_i \}$, we define the following step-function (piecewise-constant) interpolants:
\[
\begin{cases}
\displaystyle \theta^N(t,x):=\theta_i(t),\quad \nu^N(x):=\nu_i\ & \text{ for } t > 0, \;x\in I_i, \\
\displaystyle \psi^N(x,y):=\psi_{ik}^{(N)}\ &\text{ for }(x,y)\in I_i\times I_k.
\end{cases}
\]
With these definitions, the Cauchy problem \eqref{D-1} can be rewritten as the Cauchy problem for the step-graphon evolutionary equation (singular Kuramoto model, or SKM, in short):
\begin{equation} 
\begin{cases} \label{D-3}
\displaystyle \partial_t \theta^N(t,x)
	\;=\; \nu^N(x) \;+\; \kappa\int_{\Omega} \psi^N(x,y)\, h\big(\theta^N(t,y)-\theta^N(t,x)\big)\di y,
	\quad t>0,\ x\in\Omega, \\
\displaystyle \theta^N(0,x)=\theta^{\mathrm{in},N}(x):=\theta^{\mathrm{in}}_i, \quad x\in I_i. 
\end{cases}
\end{equation}

\subsection{Derivation of the SCKM} \label{sec:4.2}
In this subsection, we study the rigorous derivation of the SCKM using the graph limit. For this, we consider a sufficient framework $({\mathcal F})$ in terms of system parameters and initial data. Under this framework, we can ensure well-posedness and convergence of the step-graphon approximations. More precisely, we list conditions in the framework  $({\mathcal F})$ as follows. 
\vspace{0.1cm}
\begin{itemize}
\item 
$({\mathcal F}_1)$:~The communication weight function and its discretization satisfy
\[ \psi\in L^1(\Omega^2) \quad \mbox{and} \quad \psi^N\to\psi \quad \mbox{in} \quad L^1(\Omega^2) \quad \mbox{as $N\to\infty$}, \]
where $\psi^N$ are cell averages of $\psi$ on $I_i\times I_k$.
\vspace{0.2cm}
\item
$({\mathcal F}_2)$:~The natural frequency function and its discretization satisfy
\[  \nu\in L^2(\Omega) \quad \mbox{and} \quad  \nu^N\to \nu \quad \mbox{in $L^1(\Omega)$} \quad \mbox{as $N\to\infty$}.
 \]
 \item
$({\mathcal F}_3)$:~The initial data satisfies
\[ \theta^{\mathrm{in}}\in L^2(\Omega) \quad \mbox{and} \quad \theta^{\mathrm{in},N} \to \theta^{\mathrm{in}} \quad \mbox{in $L^2(\Omega)$} \quad \mbox{as $N\to\infty$}. \]
\item
$({\mathcal F}_4)$:~System parameters and $h$ satisfy 
\[  \beta\in [0,1), \quad \alpha\in[0,1), \quad \| h \|_{\infty} \leq 1, \quad |h(a)-h(b)|\le C_\alpha |a-b|^{1-\alpha}~~\mbox{on $[-2\pi,2\pi]$}. \]
\end{itemize}
\begin{remark}
We consider the setting where
\begin{align*}
\theta^\mathrm{in} \in L^2(\Omega), \quad \nu \in L^2 (\Omega),
\end{align*}
and $h$, $\psi$, $\alpha$ and $\beta$ are given in \eqref{A-3}. In this setting, the assumptions of the framework $(\mathcal{F})$ are satisfied by construction of the discretization \eqref{D-3}.
\end{remark}
Next, we study an error estimate for the convergence of $\theta^N$ to $\theta$. 
\begin{lemma} \label{L4.2} 
Suppose the framework $({\mathcal F})$ holds. Then, for every $T \in (0, \infty]$ there exist a positive constant $C_T$ and unique solutions $\theta^N$ and $\theta$ to \eqref{D-3} and \eqref{D-2}, respectively, such that
\begin{align} \label{D-4}
\frac{\di }{\di t}	\| 	\theta^{N} - \theta \|_{2}^{2} \leq C_T \Big( \|\nu^N -\nu \|_{1}+ \|\psi^{N}-\psi\|_{1}+ \| 	\theta^{N} - \theta \|_{2}^{2} \Big).
\end{align}
\end{lemma}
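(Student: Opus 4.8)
The plan is to run a Grönwall-type $L^{2}$-estimate on the difference $e(t,x):=\theta^{N}(t,x)-\theta(t,x)$, treating the two singularities separately: the singularity of $\psi$ is absorbed into the $L^{1}$-error $\|\psi^{N}-\psi\|_{1}$ using only $\|h\|_{\infty}\le 1$, while the genuinely delicate term --- the difference of coupling forces --- is controlled by combining the symmetry of $\psi$, the oddness of $h$, and the one-sided Lipschitz bound on $-h$ from Lemma~\ref{P4.1}. For well-posedness, existence of $\theta$ on $[0,T)$ is Theorem~\ref{T3.1}; the discrete system \eqref{D-1} has finite coefficients $\psi^{N}_{ik}=N^{2}\int_{I_i\times I_k}\psi$ and a bounded, continuous right-hand side, so it admits a global solution $\theta^{N}$ by Peano's theorem without blow-up. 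By Remark~\ref{R3.1} we take all angles modulo $2\pi$, so $|e(t,x)|\le 2\pi$; uniqueness of both solutions will then follow a posteriori from \eqref{D-4} applied with $\nu^{N}=\nu$, $\psi^{N}=\psi$ to two solutions sharing the same datum, since Grönwall forces $e\equiv 0$. To make the manipulations below rigorous one first carries them out for the $h_{\varepsilon}$-regularized solutions $\theta^{N}_{\varepsilon},\theta_{\varepsilon}$ (replacing $\psi$ by $\psi_{\varepsilon}$ in the continuum equation), which are classical in $t$; all constants are $\varepsilon$-independent by Lemma~\ref{L2.6-1} and $\|h_{\varepsilon}\|_{\infty}\le 1$, and one passes to the limit $\varepsilon\to 0$ exactly as in Theorem~\ref{T3.1}.

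Subtracting \eqref{D-2} from \eqref{D-3}, multiplying by $e$ and integrating over $\Omega$ gives
\begin{align*}
\frac12\frac{\di}{\di t}\|e(t)\|_{2}^{2}
&= \int_{\Omega} e(x)\big(\nu^{N}(x)-\nu(x)\big)\di x \\
&\quad + \kappa\iint_{\Omega^{2}} e(x)\big(\psi^{N}(x,y)-\psi(x,y)\big)h\big(\theta^{N}(t,y)-\theta^{N}(t,x)\big)\di y\di x \\
&\quad + \kappa\iint_{\Omega^{2}} e(x)\,\psi(x,y)\Big(h\big(\theta^{N}(t,y)-\theta^{N}(t,x)\big)-h\big(\theta(t,y)-\theta(t,x)\big)\Big)\di y\di x \\
&=: \mathcal{I}_{1}+\mathcal{I}_{2}+\mathcal{I}_{3}.
\end{align*}
The first two terms are immediate: $|e|\le 2\pi$ gives $|\mathcal{I}_{1}|\le 2\pi\|\nu^{N}-\nu\|_{1}$, and $|e|\le 2\pi$ together with $\|h\|_{\infty}\le 1$ (Lemma~\ref{L2.3}) gives $|\mathcal{I}_{2}|\le 2\pi\kappa\|\psi^{N}-\psi\|_{1}$.

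For $\mathcal{I}_{3}$, plain $(1-\alpha)$-Hölder continuity of $h$ does not suffice --- it would only bound $\mathcal{I}_{3}$ by a multiple of $\int_{\Omega}|e|^{2-\alpha}$, which cannot be absorbed into $\|e\|_{2}^{2}$. Instead, writing $\theta^{N}_{xy}:=\theta^{N}(t,y)-\theta^{N}(t,x)$ and $\theta_{xy}:=\theta(t,y)-\theta(t,x)$, both in $(-2\pi,2\pi)$, and using $\psi(x,y)=\psi(y,x)$ and the oddness of $h$ to symmetrize the integrand in $(x,y)$ --- so that $e(x)$ is replaced by $\tfrac12(e(x)-e(y))=-\tfrac12(\theta^{N}_{xy}-\theta_{xy})$ --- one gets
\begin{align*}
\mathcal{I}_{3}
&= -\frac{\kappa}{2}\iint_{\Omega^{2}}\psi(x,y)\,\big(\theta^{N}_{xy}-\theta_{xy}\big)\Big(h(\theta^{N}_{xy})-h(\theta_{xy})\Big)\di y\di x \\
&\le \frac{\kappa L_{h}}{2}\iint_{\Omega^{2}}\psi(x,y)\big(e(t,y)-e(t,x)\big)^{2}\di y\di x,
\end{align*}
where the inequality is the one-sided Lipschitz bound $-(a-b)(h(a)-h(b))\le L_{h}(a-b)^{2}$ on $[-2\pi,2\pi]$ from Lemma~\ref{P4.1}(ii) together with $\psi\ge 0$. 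Then $(e(y)-e(x))^{2}\le 2e(x)^{2}+2e(y)^{2}$, the symmetry of $\psi$ once more, and $\int_{\Omega}\psi(x,y)\di y\le C_{\psi}$ from \eqref{B-9} yield $\mathcal{I}_{3}\le 2\kappa L_{h}C_{\psi}\|e\|_{2}^{2}$. Collecting the three estimates and setting $C_{T}:=\max\{4\pi,\,4\pi\kappa,\,4\kappa L_{h}C_{\psi}\}$ proves \eqref{D-4}; feeding this into Grönwall's inequality with $\|e(0)\|_{2}=\|\theta^{\mathrm{in},N}-\theta^{\mathrm{in}}\|_{2}$ and $(\mathcal{F}_{1})$--$(\mathcal{F}_{3})$ gives the quantitative convergence $\theta^{N}\to\theta$ and, as noted, uniqueness.

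The main obstacle is precisely $\mathcal{I}_{3}$: without the symmetrization-plus-one-sided-Lipschitz mechanism, the mere $(1-\alpha)$-Hölder modulus of $h$ produces a term of order $\|e\|_{2-\alpha}^{2-\alpha}$ that does not close the Grönwall loop --- this is why the whole argument must be carried out with angles taken modulo $2\pi$ (so that $\theta^{N}_{xy},\theta_{xy}\in[-2\pi,2\pi]$ and $|e|\le 2\pi$), and why the symmetry of $\psi$ and the oddness of $h$ enter in an essential way. A secondary, purely technical point is justifying the differentiation of $t\mapsto\|e(t)\|_{2}^{2}$ and the use of Fubini's theorem, which is handled by the $h_{\varepsilon}$-regularization and the limit $\varepsilon\to 0$.
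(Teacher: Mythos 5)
Your proof is correct and follows essentially the same route as the paper: the same decomposition of $\frac{\di}{\di t}\|\theta^N-\theta\|_2^2$ into a $\nu$-error term, a $\psi$-error term (controlled via $\|h\|_\infty\le 1$ and $|\theta^N-\theta|\le 2\pi$), and the coupling-difference term, the last of which is handled by exactly the paper's symmetrization using $\psi(x,y)=\psi(y,x)$, oddness of $h$, and the one-sided Lipschitz bound on $-h$ from Lemma~\ref{P4.1}. The only differences are cosmetic or added care (you keep the $\kappa$ factor explicitly, invoke Peano plus the Grönwall estimate for well-posedness rather than citing \cite{P-P-J2021}, and flag the $h_\varepsilon$-regularization needed to justify differentiating $t\mapsto\|e(t)\|_2^2$).
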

\begin{proof}
Note that $-h$ is one-sided Lipschitz continuous by Lemma \ref{P4.1}. Hence, the right-hand sides of \eqref{D-1} and \eqref{D-3} are uniformly bounded and satisfy the regularity condition, yielding global and unique Filippov solutions. We refer to \cite{P-P-J2021} for more details on these results. \\

By direct calculation, one has 
	\begin{align}\label{D-5}
		\begin{aligned}
			&	\frac{\di }{\di t}	\| 	\theta^{N}(t) - \theta (t) \|_{2}^{2} = 2\int_{\Omega} (\theta^{N}(x) - \theta (x)) (\partial_t\theta^{N}(x) - \partial_t\theta (x) ) \di x\\
			& \hspace{0.5cm} = 2\int_{\Omega} (\theta^{N}(x) - \theta (x)) (\nu^N(x)-\nu(x)) \di x\\
			&\hspace{0.7cm}  + 2\iint_{\Omega^2} (\theta^{N}(x) - \theta (x)) (\psi^{N} (x,y) h(\theta^{N}(y)-\theta^{N}(x))-\psi(x,y)h(\theta(y)-\theta(x))) \di y\di x\\
			&\hspace{0.5cm}  = 2\int_{\Omega} (\theta^{N}(x) - \theta (x)) (\nu^N(x)-\nu(x)) \di x\\
			&\hspace{0.7cm}  +2\iint_{\Omega^2} (\theta^{N}(x) - \theta (x)) (\psi^{N} (x,y) h(\theta^{N}(y)-\theta^{N}(x))-\psi(x,y) h(\theta^{N}(y)-\theta^{N}(x))) \di y \di x\\
			&\hspace{0.7cm} +2\iint_{\Omega^2} (\theta^{N}(x) - \theta (x)) (\psi (x,y)h(\theta^{N}(y)-\theta^{N}(x))-\psi(x,y)h(\theta(y)-\theta(x))) \di y\di x \\
			&\hspace{0.5cm}  =:\mathcal{I}_{21}+\mathcal{I}_{22}+\mathcal{I}_{23}.
		\end{aligned}
	\end{align}
In the sequel, we estimate the terms ${\mathcal I}_{2i},~i = 1, 2,3$ one by one. \newline

\noindent $\bullet$~Estimate of ${\mathcal I}_{21}$: Note that 
	\begin{align}\label{D-6}
	\begin{aligned}
			\mathcal{I}_{21} &=2	\int_{\Omega} (\theta^{N}(t,x) - \theta (t,x)) (\nu^N(x)-\nu(x)) \di x \\
			&\le  2\int_{\Omega}|\theta^{N}(t,x) - \theta (t,x)|| \nu^N(x)-\nu(x)| \di x \le 4\pi\|\nu^N-\nu\|_{1}.
		\end{aligned}
	\end{align} \\
	
\noindent $\bullet$~Estimate of ${\mathcal I}_{22}$:~We use the bound $\|h\|_{\infty}\le 1$ to see 
	\begin{align}\label{D-7}
		\begin{aligned}
			\mathcal{I}_{22} &= 2\iint_{\Omega^2}  (\theta^{N}(x) - \theta (x)) (\psi^{N} (x,y) - \psi(x,y)) h(\theta^{N}(y)-\theta^{N}(x)) \di x\di y\\
			&	\le 4\pi \iint_{\Omega^2}|\psi^{N} (x,y)-\psi(x,y)|\di x\di y.
		\end{aligned}
	\end{align} \\
	
 \noindent $\bullet$~Estimate of ${\mathcal I}_{23}$:~We use the one-sided Lipschitz continuity of $-h$ from Lemma \ref{P4.1} and $h(-z)=-h(z)$ to estimate 
	\begin{align}\label{D-8}
			\mathcal{I}_{23}&= 2\iint_{\Omega^2}\psi(x,y) (\theta^{N}(x) - \theta (x))( h(\theta^{N}(y)-\theta^{N}(x))-h(\theta(y)-\theta(x))) \di x \di y \nonumber \\
			&=- 2\iint_{\Omega^2}\psi(y,x)(\theta^{N}(y) - \theta (y))( h(\theta^{N}(y)-\theta^{N}(x))-h(\theta(y)-\theta(x)))\di x \di y \nonumber\\
			&= \iint_{\Omega^2}\psi(x,y)\left(\left(\theta(y)- \theta (x)\right)-\left(\theta^{N}(y)- \theta^{N}(x)\right)\right) \nonumber\\
			&\qquad \times\left( -h(\theta(y)-\theta(x))-\left(-h(\theta^{N}(y)-\theta^{N}(x))\right)\right) \di x\di y \nonumber\\
			&\le  \iint_{\Omega^2}\psi(x,y)L_{h}|\left(\theta(y)- \theta (x)\right)-\left(\theta^{N}(y)- \theta^{N}(x)\right)|^2\di x\di y \nonumber\\
			&\le 2\iint_{\Omega^2}\psi(x,y)L_{h}\left(|\theta(y)-\theta^{N}(y)|^2+|\theta (x)-\theta^{N}(x)|^2\right)\di x\di y \nonumber\\
			&=4 \iint_{\Omega^2}\psi(x,y)L_{h}|\theta(y)-\theta^{N}(y)|^2\di x\di y \nonumber \\ 
			&\leq	4 C_\psi L_h\| 	\theta^{N}(t) - \theta (t) \|_{2}^{2}.
	\end{align}
In \eqref{D-5}, we collect all the estimates in \eqref{D-6}, \eqref{D-7}, \eqref{D-8} to find the desired estimate:
	\begin{align*}
		\begin{aligned}
			& \frac{\di }{\di t}	\| 	\theta^{N} - \theta \|_{2}^{2} \\
			& \hspace{0.5cm} \le 4\pi \|\nu^N -\nu \|_{1}+4\pi \iint_{\Omega^2}|\psi^{N} (x,y)-\psi(x,y)|\di x\di y +	4 C_\psi L_h\| 	\theta^{N} - \theta \|_{2}^{2}\\
			& \hspace{0.5cm}=:C_T \left( \|\nu^N -\nu \|_{1}+\|\psi^{N}-\psi\|_{1}+\| 	\theta^{N} - \theta \|_{2}^{2} \right).
		\end{aligned}
	\end{align*}		
\end{proof}

\begin{theorem} 
\emph{(Graph limit of the SKM)} \label{T4.1}
Suppose that the framework $({\mathcal F})$ holds and $T \in (0, \infty]$, let $\theta^N$ and $\theta$ be solutions to \eqref{D-3} and \eqref{D-2} in the time interval $[0, T)$, respectively. Then, there exists a positive constant ${\tilde C}_T$ such that 
\begin{align}
\begin{aligned} \label{D-9}
& (i)~\sup_{0 \leq t < T} \|\theta^N(t)-\theta(t)\|_{2}^2
	\;\le\; {\tilde C}_T\Big(
	\|\theta^{\mathrm{in},N}-\theta^{\mathrm{in}}\|_{2}^2
	+\|\nu^N-\nu\|_{1}
	+\|\psi^N-\psi\|_{1}
	\Big). \\
& (ii)~\theta^N \to \theta \quad \text{in } C\big([0,T);L^2(\Omega)\big)\quad\text{as }N\to\infty.
\end{aligned}
\end{align}
\end{theorem}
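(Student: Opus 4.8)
The plan is to deduce both assertions directly from the differential inequality in Lemma~\ref{L4.2} by a Gr\"onwall argument, and then to pass from the $L^1$-convergence of the data to the claimed uniform-in-time convergence. First I would apply Lemma~\ref{L4.2} to get, for a.e. $t\in[0,T)$,
\[
\frac{\di}{\di t}\|\theta^N(t)-\theta(t)\|_2^2 \;\le\; C_T\Big(\|\nu^N-\nu\|_1+\|\psi^N-\psi\|_1+\|\theta^N(t)-\theta(t)\|_2^2\Big),
\]
where the two data-error terms on the right are constants in $t$. Writing $E_N(t):=\|\theta^N(t)-\theta(t)\|_2^2$ and $R_N:=\|\nu^N-\nu\|_1+\|\psi^N-\psi\|_1$, this reads $E_N'(t)\le C_T E_N(t)+C_T R_N$, so Gr\"onwall's inequality gives
\[
E_N(t)\;\le\; e^{C_T t}\,E_N(0)+\big(e^{C_T t}-1\big) R_N \;\le\; e^{C_T t}\big(E_N(0)+R_N\big),
\]
with $E_N(0)=\|\theta^{\mathrm{in},N}-\theta^{\mathrm{in}}\|_2^2$. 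Taking the supremum over $t\in[0,T)$ and setting $\tilde C_T:=e^{C_T T}$ (interpreting this as $1$-plus-any finite bound when $T=\infty$ only under an additional uniform decay hypothesis — see the remark below) yields assertion (i).

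For assertion (ii) on a finite interval $T<\infty$, the right-hand side of (i) tends to $0$ as $N\to\infty$ by $({\mathcal F}_1)$, $({\mathcal F}_2)$, $({\mathcal F}_3)$, hence $\sup_{0\le t<T}\|\theta^N(t)-\theta(t)\|_2\to 0$, which is precisely convergence in $C([0,T);L^2(\Omega))$. One should also note at the outset that Lemma~\ref{L4.2} already supplies the existence and uniqueness of the Filippov solutions $\theta^N$ and $\theta$ (via the one-sided Lipschitz continuity of $-h$ from Lemma~\ref{P4.1} and Remark~\ref{R3.1}), so the statement is not vacuous; this is where the one-sided Lipschitz structure, rather than mere H\"older continuity of $h$, is essential.

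The main obstacle is the uniform-in-time ($T=\infty$) case, since the naive Gr\"onwall constant $e^{C_T t}$ blows up. Here I would invoke the homogeneous setting: when $\nu\equiv 0$ (so $\nu^N\equiv 0$ and the $\nu$-term drops), Theorem~\ref{T3.2} gives that $\mathcal{D}(\theta(t))$ — and likewise the discrete diameter along $\theta^N$ — is non-increasing and in fact collapses to $0$ in finite time, which one can use to replace the crude constant $L_h$ in the estimate of $\mathcal{I}_{23}$ by a quantity that is integrable in time, yielding a bound of the form $E_N(t)\le C\big(E_N(0)+R_N\big)$ with $C$ independent of $t$; taking the supremum then gives the uniform-in-time graph limit. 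Verifying that the one-sided Lipschitz estimate can be upgraded to a genuinely time-decaying coefficient on the synchronizing set is the delicate point, and it is the only place where the finite-time synchronization result of Section~\ref{sec:3.3} feeds back into the graph-limit analysis.
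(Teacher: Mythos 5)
Your proof of (i) and (ii) on a finite time interval $[0,T)$ with $T<\infty$ is correct and takes essentially the same route as the paper: apply Lemma~\ref{L4.2}, run Gr\"onwall on $E_N(t)=\|\theta^N(t)-\theta(t)\|_2^2$, take the supremum, and invoke $(\mathcal F_1)$--$(\mathcal F_3)$ for convergence. You also correctly flag that uniqueness and well-posedness rest on the one-sided Lipschitz property of $-h$ (Lemma~\ref{P4.1}), not on mere H\"older continuity, which is the same structural point the paper makes.

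Where you diverge from the paper is in how the uniform-in-time ($T=\infty$) refinement is obtained. You propose to upgrade the one-sided Lipschitz constant $L_h$ in the estimate of $\mathcal I_{23}$ to a time-decaying, integrable coefficient using the synchronization result from Theorem~\ref{T3.2}, so that the Gr\"onwall exponent stays bounded. The paper instead handles this (in Corollary~\ref{C4.1} and the discussion after it) by a more blunt argument: for identical natural frequencies, complete synchronization occurs in a finite time $t_*^\infty$ that is uniform in $N$, and conservation of the phase average then forces $\theta^N(t)\equiv\theta(t)$ for $t\ge t_*^\infty$, so the error vanishes identically beyond $t_*^\infty$ and the supremum over $[0,\infty)$ reduces to the supremum over the compact interval $[0,t_*^\infty]$. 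Your route would also work in principle but requires verifying that $-h_\varepsilon$ enjoys a genuinely improved one-sided Lipschitz bound in a neighborhood of the synchronizing configuration and that this improvement propagates to the exact kernel $h$; the paper's argument sidesteps that entirely. Either way, this does not affect the validity of your proof of Theorem~\ref{T4.1} as stated, since the constant $\tilde C_T$ is allowed to depend on $T$ and the uniform-in-time bound is a separate corollary in the paper.
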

\begin{proof} (i)~We apply  Gr\"onwall's lemma to \eqref{D-4} for the error estimate:
\[ \sup_{0\le t\le T}\bigl\|\theta^{N}(t)-\theta(t)\bigr\|_{2}^{2}
		\le \tilde{C}_T\Big(\|\theta^{N, \mathrm{in}}-\theta^{\mathrm{in}} \|_{2}^{2}+\|\nu^N-\nu\|_{1}+\|\psi^{N}-\psi\|_{1}\Big).
\]
We use the same estimate to derive the uniqueness of solutions. Indeed,  we apply similar arguments to two solutions $\theta$ and $\tilde{\theta}$ with the same initial data $\theta^{\mathrm{in}}=\tilde{\theta}^{\mathrm{in}}$ to find 
	\begin{equation} \label{D-10}
	\sup_{0\le t\le T}\bigl\|\theta(t)-\tilde{\theta}(t)\bigr\|_{2}^{2} \le \tilde{C}_T\|\theta^{\mathrm{in}} -\tilde{\theta}^{\mathrm{in}} \|_{2}^{2}=0, \quad \text{for any} \quad T>0.
	\end{equation}
This yields uniqueness.  \newline

\noindent (ii)~It is easy to see that the framework $({\mathcal F}_1) – ({\mathcal F}_4)$ and \eqref{D-10} imply the desired convergence estimate. 
\end{proof}
As a direct corollary of Theorem \ref{T4.1}, we have the uniform-in-time graph limit for homogeneous case. 
\begin{corollary}\label{C4.1}
Suppose that the framework $({\mathcal F})$ together with extra conditions:
\[  T \in (0, \infty], \quad \nu(x)=\nu_{\star}~\mbox{(constant)}, \quad x \in \Omega, \quad 0 < {\mathcal D}_0 < \pi, \]
holds, and let $\theta^N$ and $\theta$ be solutions to \eqref{D-3} and \eqref{D-2} in the time interval $[0, T)$, respectively. Then, there exists some constant $\bar{C}_T$ independent of time $t$ such that
\[
\sup_{0 \leq t < T} \|\theta^N(t)-\theta(t)\|_{2}^2
\;\le\; \bar{C}_T \Big(
\|\theta^{\mathrm{in},N}-\theta^{\mathrm{in}}\|_{2}^2
+\|\psi^N-\psi\|_{1}
\Big).
\]
\end{corollary}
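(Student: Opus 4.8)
The plan is to split the time axis at the finite synchronization time of the continuum solution and to exploit the fact that, past that time, the coupling term in the $L^2$-error identity turns dissipative. Since $\nu\equiv\nu_{\star}$ is constant, its cell averages satisfy $\nu^N\equiv\nu_{\star}$, so $\|\nu^N-\nu\|_{1}=0$ and that term in Theorem \ref{T4.1} drops out. By translation invariance (Lemma \ref{L2.1}), replacing $\theta,\theta^N$ by $\theta-\nu_{\star}t,\ \theta^N-\nu_{\star}t$ leaves $\theta^N-\theta$ unchanged, makes $\frac{\di}{\di t}\bar\theta(t)=\frac{\di}{\di t}\bar\theta^N(t)=\nu_{\star}$, and reduces the diameter dynamics to the $\nu\equiv 0$ setting of Section \ref{sec:3.3}. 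Hence the decay estimate in the proof of Theorem \ref{T3.2}(ii) (applicable since $\alpha\in(0,1)$) provides a finite time
\[ t_c\;=\;t_c(\mathcal{D}_0,\kappa,\alpha,\beta)\;:=\;\frac{(1-\beta)\mathcal{D}_0^{1+\alpha}}{\alpha\kappa(\sin\mathcal{D}_0)(2-2^{\beta})} \]
with $\mathcal{D}(\theta(t))=0$, i.e. $\theta(t,\cdot)\equiv\bar\theta(t)$ in $L^2(\Omega)$, for all $t\ge t_c$. Moreover $\theta^N$ corresponds to the finite singular Kuramoto system \eqref{D-1} with nonnegative weights $\psi^N_{ik}$ and $\mathcal{D}(\theta^{\mathrm{in},N})\le\mathcal{D}_0<\pi$ (cell averaging does not increase the phase diameter), so the phase diameter is contractive: $\mathcal{D}(\theta^N(t))\le\mathcal{D}_0<\pi$ for all $t\ge0$. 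We may assume $T>t_c$, since otherwise Theorem \ref{T4.1}(i) already gives the claim with $\bar C_T:=\tilde C_{t_c}$.

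On $[0,t_c]$, Theorem \ref{T4.1}(i) with $\|\nu^N-\nu\|_{1}=0$ yields $\sup_{0\le t\le t_c}\|\theta^N(t)-\theta(t)\|_{2}^{2}\le\tilde C_{t_c}\bigl(\|\theta^{\mathrm{in},N}-\theta^{\mathrm{in}}\|_{2}^{2}+\|\psi^N-\psi\|_{1}\bigr)$, and $\tilde C_{t_c}$ is a fixed constant because $t_c$ depends only on $\mathcal{D}_0,\kappa,\alpha,\beta$. For the tail $t\ge t_c$, I would revisit the energy identity from the proof of Lemma \ref{L4.2}. Writing $E(t):=\|\theta^N(t)-\theta(t)\|_{2}^{2}$ and using $\theta(t,\cdot)\equiv\bar\theta(t)$, the term $\mathcal{I}_{21}$ vanishes and $h(\theta(y)-\theta(x))=h(0)=0$, so the symmetrization $x\leftrightarrow y$ used there (with $\psi$ symmetric and $h$ odd) reduces $\mathcal{I}_{23}$ to
\[ \mathcal{I}_{23}\;=\;-\iint_{\Omega^2}\psi(x,y)\,\big(\theta^N(y)-\theta^N(x)\big)\,h\big(\theta^N(y)-\theta^N(x)\big)\di y\di x. \]
Because $|\theta^N(t,y)-\theta^N(t,x)|\le\mathcal{D}(\theta^N(t))\le\mathcal{D}_0<\pi$, Lemma \ref{L2.4} gives the pointwise coercivity $z\,h(z)\ge\lambda\,z^{2}$ on $[-\mathcal{D}_0,\mathcal{D}_0]$ with $\lambda:=\frac{\sin\mathcal{D}_0}{\mathcal{D}_0^{1+\alpha}}>0$, and $\psi(x,y)=|x-y|^{-\beta}\ge 1$ on $[0,1]^2$; combined with the identity $\iint_{\Omega^2}(f(y)-f(x))^{2}\di y\di x=2\|f-\bar f\|_{2}^{2}$ (valid since $|\Omega|=1$), this yields $\mathcal{I}_{23}\le-2\lambda\,\|\theta^N(t)-\bar\theta^N(t)\|_{2}^{2}$. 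Together with $\mathcal{I}_{22}\le 4\pi\|\psi^N-\psi\|_{1}$ and the orthogonal decomposition $E(t)=\|\theta^N(t)-\bar\theta^N(t)\|_{2}^{2}+|\bar\theta^N(t)-\bar\theta(t)|^{2}$, where Lemma \ref{L2.1} makes $\bar\theta^N(t)-\bar\theta(t)=\bar\theta^{\mathrm{in},N}-\bar\theta^{\mathrm{in}}=:d$ constant in $t$ with $d^{2}\le\|\theta^{\mathrm{in},N}-\theta^{\mathrm{in}}\|_{2}^{2}$, one obtains
\[ \frac{\di}{\di t}E(t)\;\le\;-2\lambda\,E(t)+\big(4\pi\|\psi^N-\psi\|_{1}+2\lambda d^{2}\big),\qquad \mbox{a.e.}~t\ge t_c, \]
hence $\sup_{t\ge t_c}E(t)\le E(t_c)+\frac{2\pi}{\lambda}\|\psi^N-\psi\|_{1}+d^{2}$.

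Combining the two regimes gives $\sup_{0\le t<T}\|\theta^N(t)-\theta(t)\|_{2}^{2}\le\bar C_T\bigl(\|\theta^{\mathrm{in},N}-\theta^{\mathrm{in}}\|_{2}^{2}+\|\psi^N-\psi\|_{1}\bigr)$ with $\bar C_T:=\tilde C_{t_c}+\frac{2\pi}{\lambda}+1$, which depends only on $\mathcal{D}_0,\kappa,\alpha,\beta$ and in particular not on $T$. The crux of the argument, and the only nonroutine step, is the realization that once the continuum profile is synchronized the contribution of the interaction to $\frac{\di}{\di t}\|\theta^N-\theta\|_{2}^{2}$ ceases to be controlled merely by the one-sided Lipschitz constant $L_h$ — whose presence is exactly what forces the exponential-in-$T$ growth of $\tilde C_T$ in Theorem \ref{T4.1} — and instead becomes the genuinely damping term $-2\lambda\|\theta^N(t)-\bar\theta^N(t)\|_{2}^{2}$. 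The ingredients that make this go through (the uniform lower bound $\psi\ge 1$ on $[0,1]^2$, the coercivity $z\,h(z)\ge\lambda z^{2}$ from Lemma \ref{L2.4}, and the contractivity $\mathcal{D}(\theta^N(t))\le\mathcal{D}_0<\pi$, the last relying on $\theta^{\mathrm{in},N}$ being a cell average of $\theta^{\mathrm{in}}$) are all elementary, so this is indeed a short corollary of Theorems \ref{T3.2} and \ref{T4.1}.
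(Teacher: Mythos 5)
Your proof is correct, but it takes a genuinely different and in fact stronger route than the paper. The paper's proof of Corollary~\ref{C4.1} is short: it simply re-runs Gr\"onwall on \eqref{D-4}, observes that $\nu^N=\nu=\nu_\star$ kills the $\|\nu^N-\nu\|_1$ term, and sets $\bar C_T := CTe^{CT}$ --- a constant that still grows exponentially in $T$. The phrase ``independent of time $t$'' is thus interpreted there merely as ``uniform over $[0,T)$,'' which Theorem~\ref{T4.1} already supplies; genuine $T$-uniformity is only sketched informally in the paragraph after the corollary, and rests on the assertion that $\theta^N$ \emph{also} synchronizes exactly at a common finite time $t_*^\infty$, which would require reproving Theorem~\ref{T3.2}(ii) for the step-graphon system \eqref{D-3} --- something the paper does not actually carry out.

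Your argument avoids that gap. You split time at the continuum synchronization time $t_c$, use the exponential-in-$t_c$ constant from Theorem~\ref{T4.1} on $[0,t_c]$, and on $[t_c,\infty)$ extract a damped differential inequality $\dot E\le -2\lambda E + b$ from the error identity of Lemma~\ref{L4.2} by combining (a) the coercivity $z\,h(z)\ge\lambda z^2$ on $[-\mathcal D_0,\mathcal D_0]$ coming from Lemma~\ref{L2.4}(i), (b) the uniform lower bound $\psi\ge 1$ on $[0,1]^2$, (c) the identity $\iint_{\Omega^2}(f(y)-f(x))^2\,\di y\,\di x = 2\|f-\bar f\|_2^2$, and (d) diameter contractivity of $\theta^N$. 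Crucially, you only need $\mathcal D(\theta^N(t))\le\mathcal D_0<\pi$ for the finite system --- a maximum-principle fact using nonnegativity of $\psi^N_{ik}$ and sign-definiteness of $h$ on $(-\pi,\pi)$, not the refined communication-weight estimates of Lemma~\ref{L2.5} --- rather than the stronger (and unproved) claim that $\theta^N$ also reaches $\mathcal D(\theta^N)=0$. Together with the orthogonal decomposition of $E(t)$ and the conservation $\bar\theta^N(t)-\bar\theta(t)\equiv d$ (Lemma~\ref{L2.1}), this yields a constant depending only on $(\mathcal D_0,\kappa,\alpha,\beta)$, not on $T$.

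Two caveats, both shared with the paper's own post-corollary discussion and therefore not defects of your argument alone: you need $\mathcal D(\theta^{\mathrm{in},N})\le\mathcal D_0$, which $(\mathcal F_3)$ does not explicitly impose (it holds if $\theta^{\mathrm{in},N}$ are cell averages of $\theta^{\mathrm{in}}$, which you and the authors both tacitly assume); and you need $\alpha\in(0,1)$ for finite-time synchronization, whereas $(\mathcal F_4)$ technically permits $\alpha=0$, in which case the split at $t_c$ is unavailable. It would be worth stating these as standing hypotheses.
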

\begin{proof}
Recall that in the proof of Theorem \ref{T4.1} we applied Gr\"onwall’s lemma to \eqref{D-4} to obtain
	\[
	\bigl\|\theta^{N}(t)-\theta(t)\bigr\|_{2}^{2}
	\;\le\;
	C t e^{C t} \left( \bigl\|\theta^{N, \mathrm{in}}-\theta^{\mathrm{in}} \bigr\|_{2}^{2}+\bigl\|\nu^N-\nu\|_{1}+\|\psi^{N}-\psi\|_{1} \right).
	\]
	Thus, for any \(T>0\), we have
	\begin{align*}
		\begin{aligned}
			& \sup_{0\le t\le T}\bigl\|\theta^{N}(t)-\theta(t)\bigr\|_{2}^{2}  
 \leq \bar{C}_T \Big(\|\theta^{N, {\mathrm{in}}}-\theta^{\mathrm{in}} \|_{2}^{2}+\|\psi^{N}-\psi\|_{1}\Big),
		\end{aligned}
	\end{align*}
	where \(\bar{C}_T:= C T  e^{C T}\).
\end{proof}
Recall that in Theorem \ref{T3.2}, we obtained the complete phase synchronization:
\[\mathcal{D}(\theta(t))=0, \quad t \geq t_* = \frac{(1-\beta)\mathcal{D}_0^{1+\alpha} }{\alpha \kappa (\sin \mathcal{D}_0) (2-2^\beta)}. \]
This only depends on the diameter of the initial data, and we know 
\[\mathcal{D}_0\ge \mathcal{D}(\theta^{N, \mathrm{in}}),\quad \lim_{N\to\infty}\mathcal{D}(\theta^{N, \mathrm{in}})=\mathcal{D}_0.\] 
Therefore, we can find a positive constant $t_*^{\infty}$  such that
\[\mathcal{D}(\theta(t))=\mathcal{D}(\theta^{N}(t))=0, \quad t \geq t_*^{\infty}. \]
Note that 
\[\int_{\Omega} \theta(t,x)\di x= \int_{\Omega}\theta(0,x)\di x=\int_{\Omega}\theta^{N}(0,x)\di x=\int_{\Omega}\theta^{N}(t,x)\di x=0. \]
This implies
\[\theta(t,x)=\theta^{N}(t,x)=0, \quad t  \geq t_*^{\infty}, \]
which gives 
\[\|\theta^N(t)-\theta(t)\|_{2}^2=0, \quad t  \geq t_*^{\infty}. \]
Finally, we use Theorem \ref{T4.1} to find the uniform-in-time graph limit.

\section{Numerical simulations} \label{sec:5}
\setcounter{equation}{0}
In this section, we provide several numerical simulations for the constant and nonconstant natural frequency functions, respectively, and compare them with analytic results studied in previous sections. In all simulations, we have used spatial and phase regularization for stability of the discrete operator, implicit midpoint method and first-order explicit Euler for adaptive time-stepping and quadrature to discretize the nonlinear term in \eqref{A-2}. More precisely, we delineate the procedure as follows.

\begin{itemize}
\item
Step A: The spatial domain $[0,1]$ is discretized uniformly using
\begin{align*}
x_i = i/N, \quad i = 0, 1, \dots, N = 512.
\end{align*}
\item 
Step B: The kernel $\psi(x,y) = |x-y|^{-\beta}$ is discretized into a dense matrix 
\begin{align*}
[\Psi]_{ij} = \max \lbrace |x_i - x_j|, \varepsilon \rbrace^{-\beta},
\end{align*}
where the cutoff $\varepsilon = 10^{-9}$ regularizes the singularity near $x_i = x_j$.
\vspace{0.1cm}

\item
Step C: The right-hand side of $\eqref{A-2}_1$ is replaced by pairwise summation:
\begin{align} \label{E-0}
\frac{\di \theta_i}{\di t} = \nu_i + \kappa \sum_{j \neq i} \Psi_{ij} \frac{\sin(\theta_j - \theta_i)}{\max \lbrace |\theta_j - \theta_i|, \delta \rbrace^\alpha}
\end{align}
with small-angle regularization $\delta = 10^{-3}$.
\vspace{0.1cm}

\item
Step D: The time-stepping is based on the implicit midpoint method:
\begin{align} \label{E-1}
\theta^{n+1} = \theta^n + \Delta t f \left( \frac{\theta^n +\theta^{n+1}}{2} \right),
\end{align}
where $f$ is the discrete right-hand side in \eqref{E-0}. We solve this implicit equation using Picard iteration:
\begin{align*}
\theta^{(k+1)} = (1-\omega) \theta^{(k)} + \omega \left[ \theta^n + \Delta t f \left( \frac{\theta^n + \theta^{(k)}}{2} \right) \right]
\end{align*}
with relaxation parameter $\omega =0.7$.
\vspace{0.1cm}

\item
Step E: The time-step is adapted based on the local error estimate, which is done as follows: 
\newline
\begin{itemize}
\item Step E.1: Compute one step with the implicit midpoint method $\theta_\mathrm{new}$ as in \eqref{E-1}. \newline
\item Step E.2: Compute a first-order explicit Euler step:
\begin{align*}
\theta_\mathrm{explicit} = \theta^n + \Delta f(\theta^n). 
\end{align*}
\item Step E.3: Estimate local error as 
\begin{align*}
\mathrm{err} = \max_i |\theta_\mathrm{new,i} - \theta_\mathrm{explicit,i}|.
\end{align*}
\item Step E.4: Adjust the time step according to:
\begin{align*}
\Delta t_\mathrm{new} = 0.9 \times \Delta t \times  \left( \frac{10^{-4}}{\mathrm{err}} \right)^\frac{1}{2}.
\end{align*}
\end{itemize}
This ensures temporal accuracy and stability while avoiding unnecessary small steps.
\end{itemize}

\subsection{Homogeneous ensemble} \label{sec:5.1}
In this subsection, we consider a homogeneous ensemble in which the natural frequency function is simply constant. Then, without loss of generality, we may assume 
\[  \nu(x) = 0, \quad \forall~x \in \Omega =  [0,1]. \]
In this case, the SCKM \eqref{A-2} becomes 
\[ 
\begin{cases}
\displaystyle \partial_t \theta(t, x) =  \kappa \int_{0}^{1}   \frac{1}{|x-y|^{\beta}}  \frac{\sin( \theta(t, y) - \theta(t, x))}{|\theta(t,y) - \theta(t,x) |^{\alpha}_o} \di y, \quad t > 0,~~x \in [0, 1], \vspace{6pt}\\
\displaystyle \theta(0, x) = \sin  x, \quad x \in [0,1].
\end{cases}
\]
Next, we numerically investigate the finite-time formation of complete phase synchronization and compare the convergence speed depending on the relative sizes between $\alpha, \beta$ and $\kappa$. The numerical results yield the following observations which are consistent with our theoretical results in Theorem \ref{T3.2}.

\begin{itemize}
\item Increasing the parameter~$\beta$ leads to faster synchronization, as evidenced in Figures~\ref{fig1a} and~\ref{fig1b}.
\vspace{0.1cm}

\item Increasing the parameter~$\alpha$ leads to faster synchronization, as evidenced in Figures~\ref{fig2a} and~\ref{fig2b}.
\vspace{0.1cm}

\item Comparing Figures~\ref{fig1a} and~\ref{fig1b} with Figures~\ref{fig2a} and~\ref{fig2b}, respectively, shows that parameter $\alpha$ has a stronger effect on the synchronization rate than parameter $\beta$.
\vspace{0.1cm}

\item Increasing coupling strength $\kappa$ leads to faster synchronization, as can be seen in Figures~\ref{fig3a} and~\ref{fig3b}.
\end{itemize}

\begin{figure}
    \centering
    \begin{subfigure}[b]{0.45\textwidth} 
        \centering
        \includegraphics[width=\linewidth]{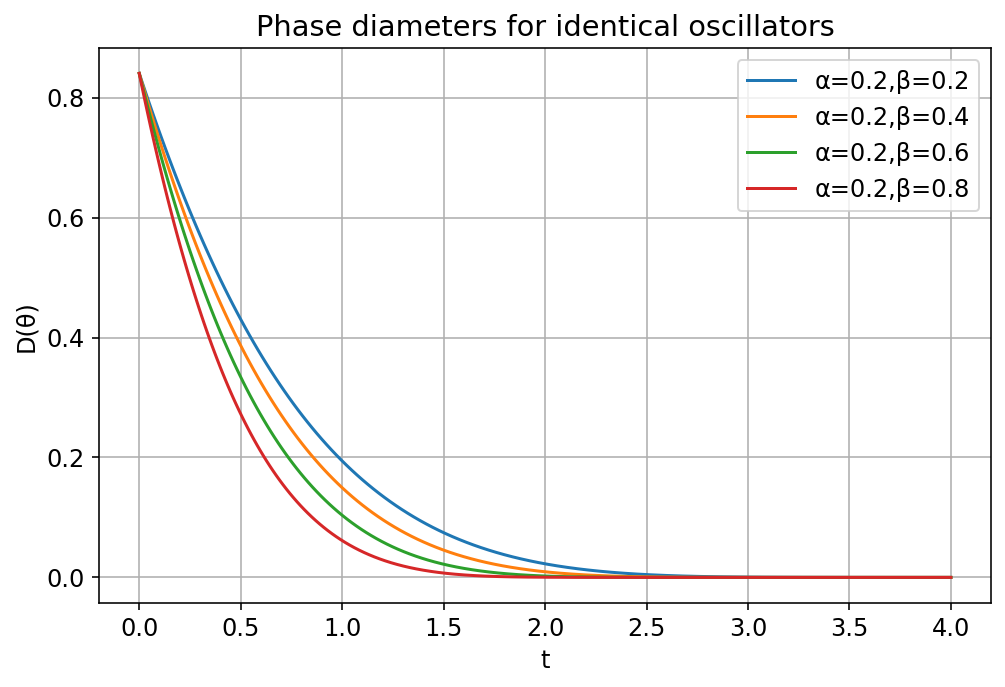}
        \caption{$\alpha = 0.2$, $\beta = 0.2,  0.4, 0.6, 0.8$, $\kappa = 1.0$}
        \label{fig1a}
    \end{subfigure}
    \hfill 
    \begin{subfigure}[b]{0.45\textwidth}
        \centering
        \includegraphics[width=\linewidth]{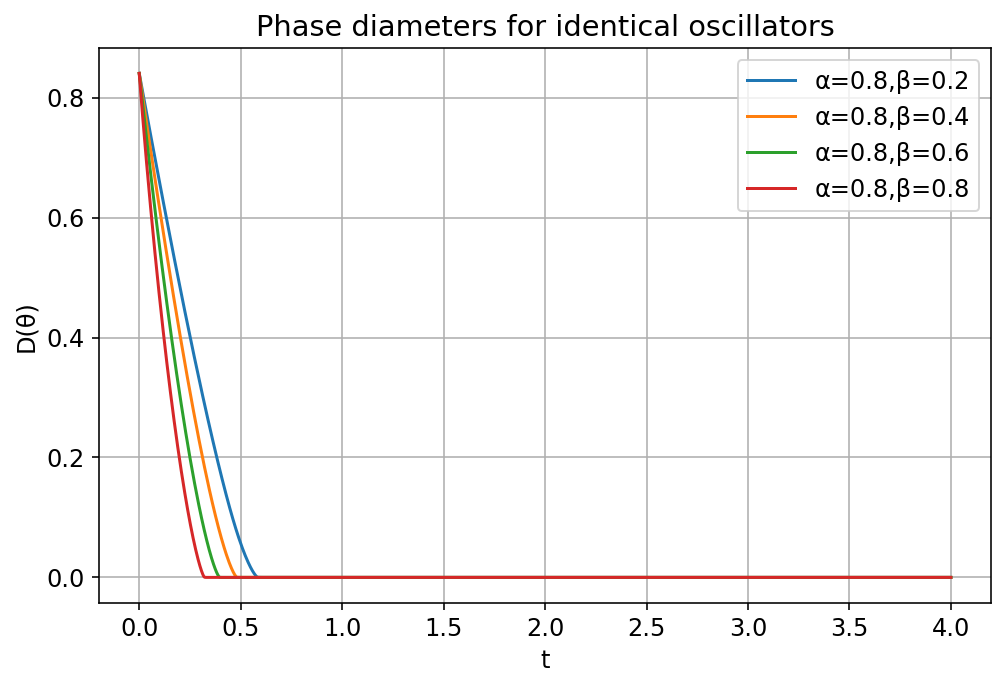}
        \caption{$\alpha = 0.8$, $\beta = 0.2,  0.4, 0.6, 0.8$, $\kappa = 1.0$}
        \label{fig1b}
    \end{subfigure}
        \caption{Comparison of phase diameter dynamics in homogeneous case for fixed $\alpha$ and varying kernel singularity parameter $\beta$.}
\end{figure}
    
    \begin{figure}
    \centering
    \begin{subfigure}[b]{0.45\textwidth} 
        \centering
        \includegraphics[width=\linewidth]{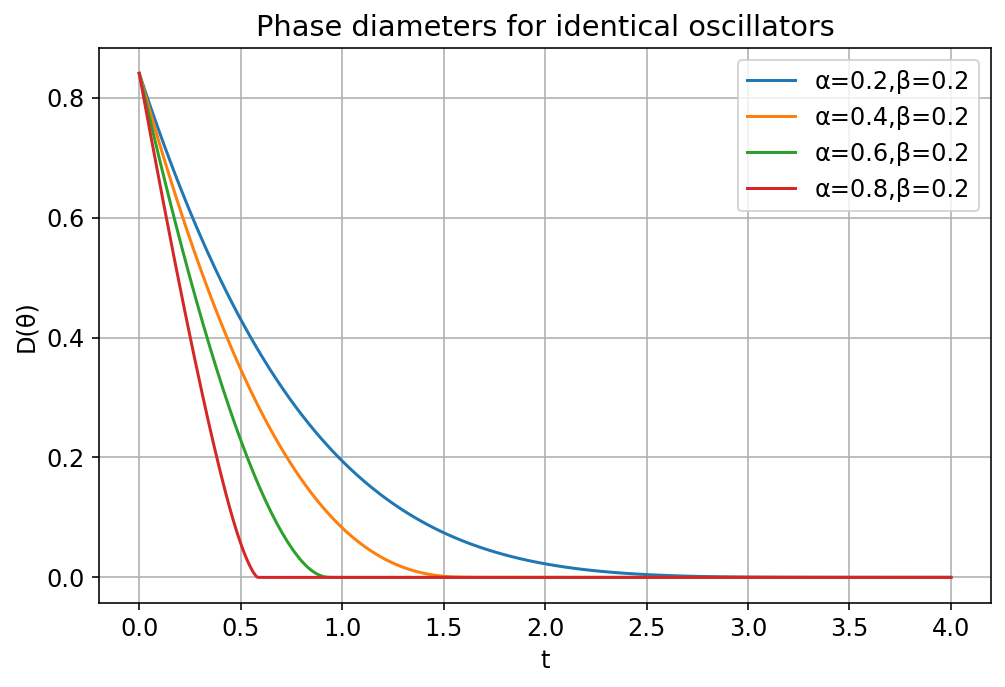}
        \caption{$\beta = 0.2$, $\alpha = 0.2,  0.4, 0.6, 0.8$, $\kappa = 1.0$}
        \label{fig2a}
    \end{subfigure}
    \hfill 
    \begin{subfigure}[b]{0.45\textwidth}
        \centering
        \includegraphics[width=\linewidth]{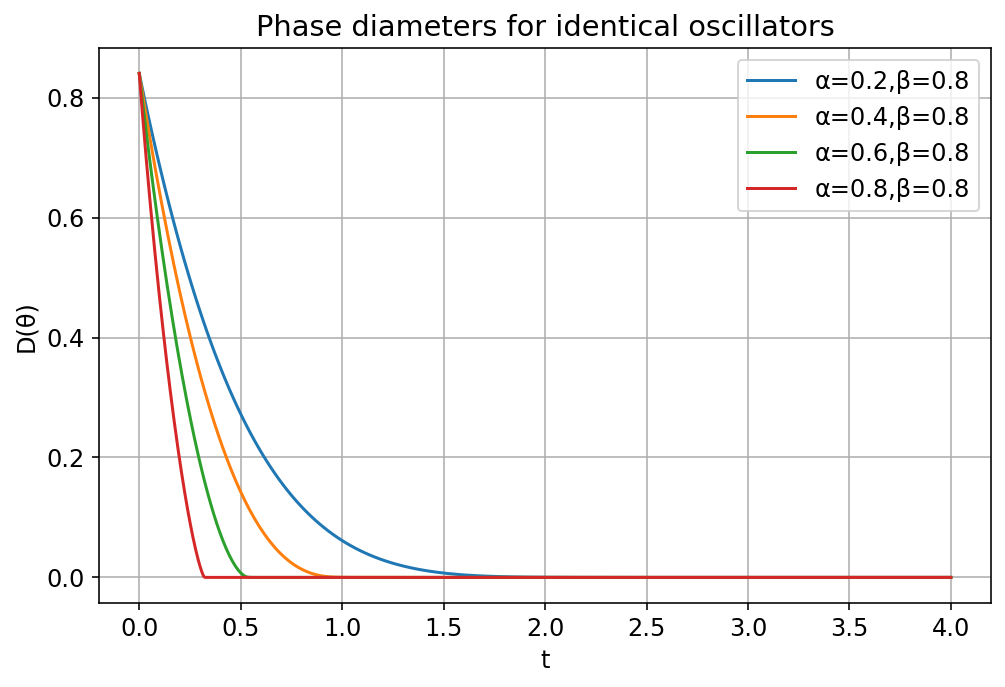}
        \caption{$\beta = 0.8$, $\alpha = 0.2,  0.4, 0.6, 0.8$, $\kappa = 1.0$}
        \label{fig2b}
    \end{subfigure}
    \caption{Comparison of phase diameter dynamics in homogeneous case for fixed $\beta$ and varying kernel singularity parameter $\alpha$.}
\end{figure}

\begin{figure}
    \centering
    \begin{subfigure}[b]{0.45\textwidth} 
        \centering
        \includegraphics[width=\linewidth]{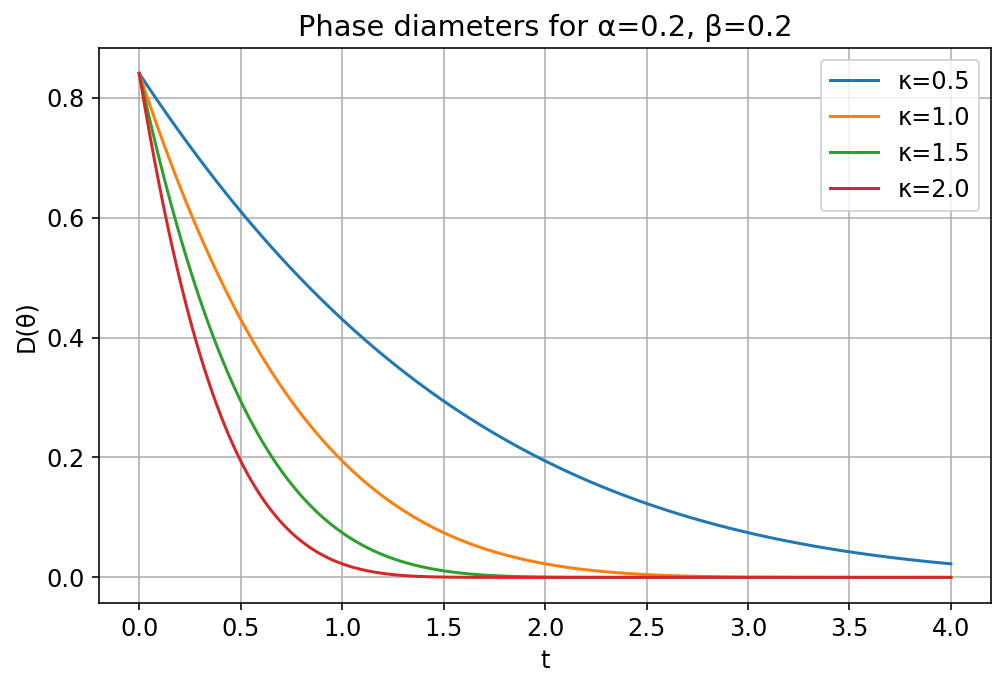}
        \caption{$\alpha = 0.2, \beta = 0.2$, $\kappa = 0.5, 1.0, 1.5, 2.0$}
        \label{fig3a}
    \end{subfigure}
    \hfill 
    \begin{subfigure}[b]{0.45\textwidth}
        \centering
        \includegraphics[width=\linewidth]{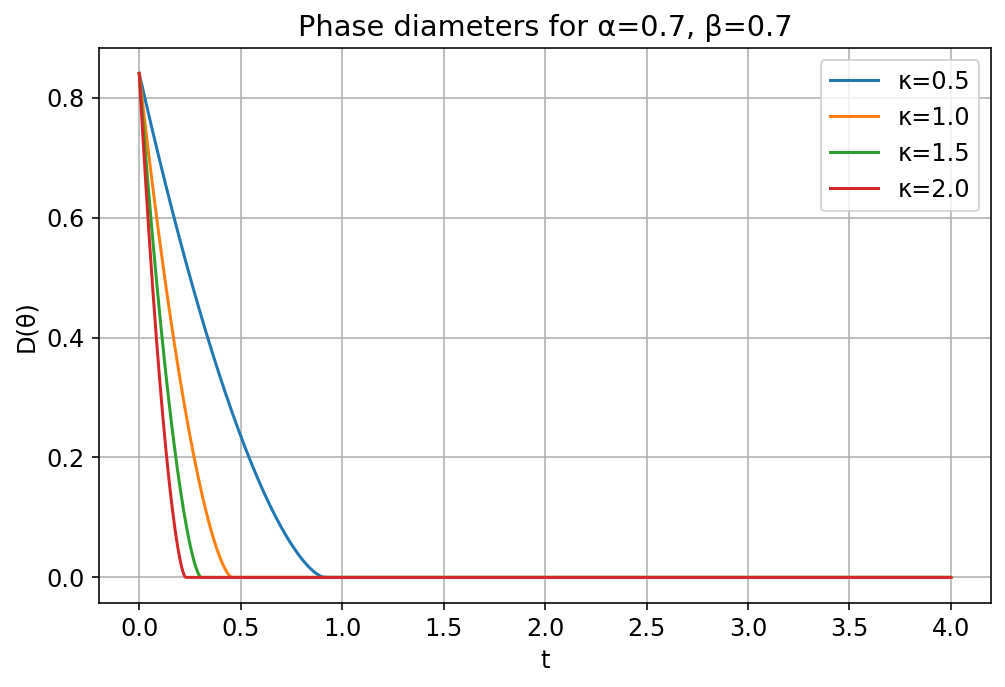}
        \caption{$\alpha = 0.6, \beta = 0.6$, $\kappa = 0.5, 1.0, 1.5, 2.0$}
        \label{fig3b}
    \end{subfigure}
    \caption{Comparison of phase diameter dynamics in homogeneous case for fixed $\alpha$ and $\beta$ and varying coupling strength $\kappa$.}
\end{figure}

\subsection{Heterogeneous ensemble} \label{sec:5.2}
In this subsection, we consider a heterogeneous ensemble in which the natural frequency function is as follows: 
\[  \nu(x) = \cos x, \quad \forall~x \in \Omega =  [0, 1]. \]
In this case, the SCKM \eqref{A-2} becomes 
\[ 
\begin{cases}
\displaystyle \partial_t \theta(t, x) = \cos x +  \kappa \int_{0}^{1}   \frac{1}{|x-y|^{\beta}}  \frac{\sin( \theta(t, y) - \theta(t, x))}{|\theta(t,y) - \theta(t,x) |^{\alpha}_o} \di y, \quad t > 0,~~x \in [0, 1], \vspace{6pt}\\
\displaystyle \theta(0, x) = \sin  x, \quad x \in [0,1].
\end{cases}
\]
Next, we numerically investigate the practical synchronization in Theorem~\ref{T3.3} and compare the convergence speed depending on the relative sizes between $\alpha, \beta$ and $\kappa$. We compute the critical coupling strength $\kappa_*$ according to \eqref{C-7-1} for $\varepsilon = 0.5$ in Theorem~\ref{T3.3}, and we consider two values of $\kappa$ below and two above this critical threshold. We make the following observations:

\begin{itemize}
\item In the case~$\alpha = \beta = 0.2$, the critical coupling strength is computed as~$\kappa_* = 0.938074$. As shown in Figure~\ref{fig4a}, when~$\kappa$ is below this critical value, the phase diameter exceeds the threshold~$\varepsilon = 0.5$. In contrast, for~$\kappa$ values greater than the critical coupling strength, the phase diameter remains strictly below this threshold.
\vspace{0.1cm}

\item In the case~$\alpha = \beta = 0.7$, the critical coupling strength is computed as~$\kappa_* = 0.731129$. In Figure~\ref{fig4b}, we observe that higher singularity parameters, $\alpha = \beta = 0.7$, contribute to maintaining the phase diameter below the threshold~$\varepsilon = 0.5$, in some cases even when~$\kappa$ is below the theoretically computed critical coupling strength. When~$\kappa$ exceeds the critical value, the phase diameter is significantly smaller than for weaker parameters $\alpha = \beta=0.2$.
\end{itemize}

 \begin{figure}
    \centering
    \begin{subfigure}[b]{0.45\textwidth} 
        \centering
        \includegraphics[width=\linewidth]{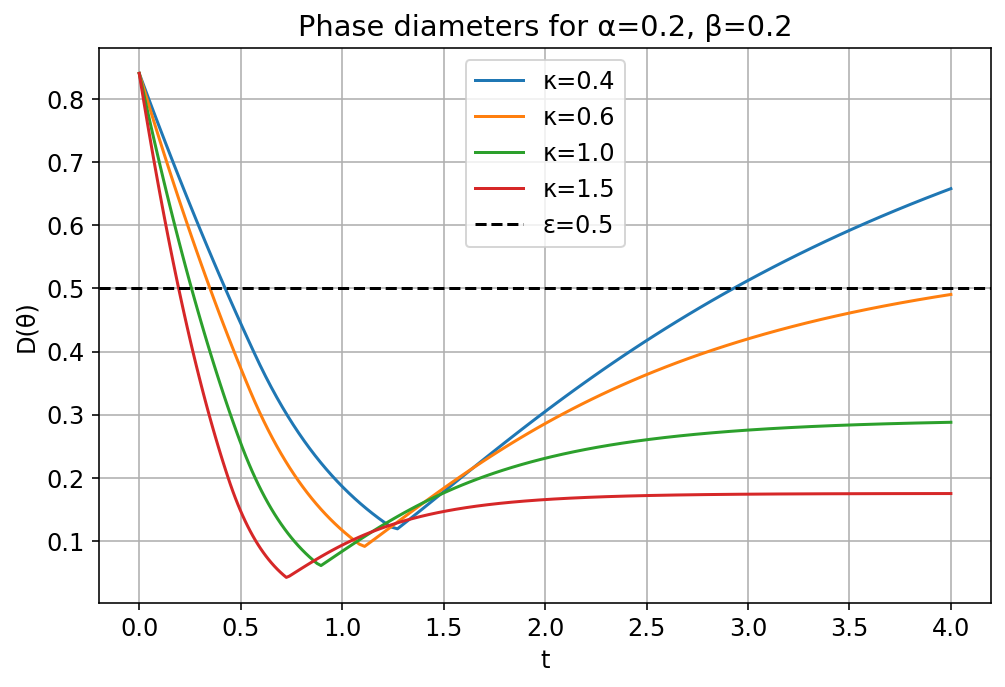}
        \caption{$\alpha = 0.2, \beta = 0.2$, $\varepsilon = 0.5$, \newline $\kappa = 0.4, 0.6, 1.0, 1.5$}
        \label{fig4a}
    \end{subfigure}
    \hfill  
    \begin{subfigure}[b]{0.45\textwidth}
        \centering
        \includegraphics[width=\linewidth]{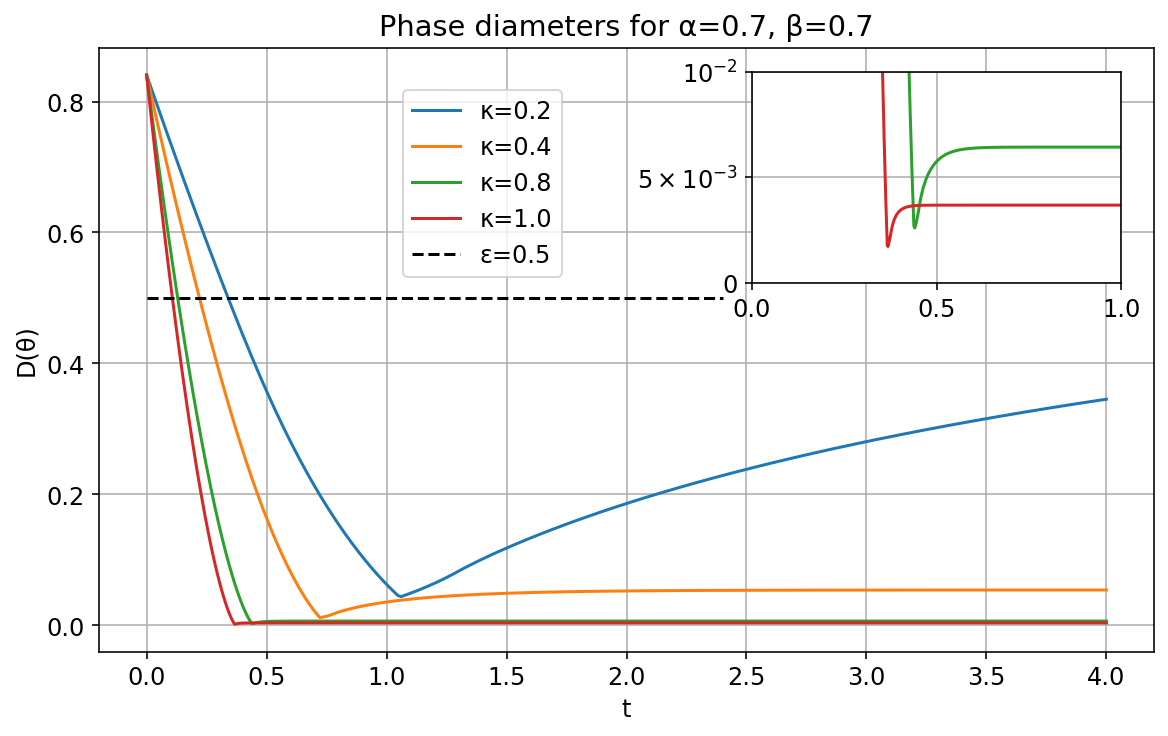}
        \caption{$\alpha = 0.8, \beta = 0.8$, $\varepsilon = 0.5$, \newline $\kappa = 0.2, 0.4, 0.8, 1.0$}
        \label{fig4b}
    \end{subfigure}
    \caption{Comparison of phase diameter dynamics in heterogeneous case for fixed $\alpha$ and $\beta$ and varying coupling strength $\kappa$.}
\end{figure}

\section{Conclusion} \label{sec:6}
In this paper, we have studied the emergent dynamics and graph limit of the SCKM under the double singular effects. Our results showed that the singular kernel will lead to a sticking effect in finite time, which is very different from regular kernels. In particular, we have provided frameworks for the complete synchronization and practical synchronization under homogeneous and heterogeneous natural frequencies, respectively. On the other hand, we established the rigorous graph limit from the finite particle model to the SCKM. This yields that the SCKM is the natural description of a large scale of the ensemble of infinite Kuramoto oscillators with singular communication weight and couplings. Moreover, there are several issues to be explored in future. To name a few, first whether the singular integro-differential equation admits a gradient flow structure which allows us to show stronger synchronization results. Second, a rigorous collective dynamics analysis of singular stochastic continuous integro-differential equation will be a very interesting question. 

\section*{Conflict of interest statement}
The authors declare no conflicts of interest.

\section*{Data availability statement}
The data supporting the findings of this study are available from the corresponding author upon reasonable request.

\appendix

\section{Proofs of basic results} \label{App-0}
\setcounter{equation}{0}
In this appendix, we present the proofs of some basic results in Section \ref{sec:2}.

\begin{proof}[Proof of Lemma~\ref{L2.2}]
Since the map $u\mapsto u^q$ is concave on $[0,\infty)$, we have
\begin{align*}
u^q  = \left( \frac{u}{u+v} (u+v) + \frac{v}{u+v} \cdot 0 \right)^q \geq \frac{u}{u+v} (u+v)^q + \frac{v}{u+v} 0^q = \frac{u}{u+v} (u+v)^q
\end{align*}
and
\begin{align*}
v^q  = \left( \frac{v}{u+v} (u+v) + \frac{u}{u+v} \cdot 0 \right)^q \geq \frac{v}{u+v} (u+v)^q + \frac{u}{u+v} 0^q = \frac{v}{u+v} (u+v)^q
\end{align*}
for all $u,v\geq 0$. We add the above two equations to get
\begin{align*}
u^q + v^q \geq (u+v)^q.
\end{align*}
\end{proof}

\begin{proof}[Proof of Lemma \ref{L2.4}]
\noindent (i)~Note that 
\begin{align*}
g'(\theta) = \frac{ \theta^{1+\alpha } \cos (\theta)- (1+\alpha) \theta^\alpha \sin (\theta)}{\theta^{2+ 2\alpha}} = \frac{\theta\cos(\theta)  - (1+\alpha)  \sin (\theta)}{ \theta^{2+ \alpha}}.
\end{align*}
This implies the following equivalence relations: for $\theta \in (0, \pi)$, 
\begin{equation*} \label{B-2-5-3}
g^{\prime}(\theta) \leq 0 \quad \iff \quad \theta \cos (\theta) - (1+\alpha)  \sin (\theta) \leq 0 \quad \iff \quad  1+\alpha \geq \theta \cot (\theta).
\end{equation*}
Next, we analyze the function $\theta \cot (\theta)$ on the intervals $\left[ \frac{\pi}{2}, \pi \right)$ and $(0,\frac{\pi}{2})$, respectively. \newline

\noindent $\bullet$~Case A $( \theta \in \left[ \frac{\pi}{2}, \pi \right))$:~In this case, the function $\theta \mapsto \cot (\theta)$ is nonpositive on $\left[ \frac{\pi}{2}, \pi \right)$. Hence, we have 
\[ 1+\alpha \geq \theta \cot (\theta) \quad \mbox{on $\left[ \frac{\pi}{2}, \pi \right)$}. \]
\noindent $\bullet$~Case B $(\theta \in (0,\frac{\pi}{2}))$:~By direct calculation, one has 
\begin{align*}
(\theta \cot (\theta))'=\left( \frac{\theta\cos(\theta)}{\sin (\theta)} \right)' = \frac{(\cos (\theta) - \theta\sin (\theta)) \sin (\theta) - \theta\cos^2 (\theta)}{\sin^2 (\theta)} = \frac{\frac{1}{2} \sin (2\theta) - \theta}{\sin^2 (\theta)} \leq 0,
\end{align*} 
so the function $\theta \mapsto \theta \cot (\theta)$ is nonincreasing. Moreover, we have
\begin{align*}
\lim_{\theta \rightarrow 0} \frac{\theta \cos (\theta)}{\sin (\theta)} = \lim_{\theta \rightarrow 0} \frac{\cos (\theta) - \theta\sin (\theta)}{\cos (\theta)} =1.
\end{align*}
This implies 
\[\sup\limits_{\theta \in(0,\frac{\pi}{2})} \theta \cot (\theta) \le1 \leq 1 + \alpha. \]  
Finally, we combine the results of Case A and Case B to derive the desired estimate. \\

\noindent (ii)~Note that
\begin{align*}
f' (\theta) = \frac{\theta \cos (\theta)- \sin (\theta)}{\theta^2}
\end{align*}
and
\begin{align*}
\frac{\di}{\di \theta} (\theta \cos (\theta)- \sin (\theta)) = -\theta \sin (\theta)  + \cos (\theta) - \cos (\theta)= -\theta \sin (\theta) <0, \quad \mathrm{for} \quad \theta \in (0,\pi).
\end{align*}
Therefore, we have the desired estimate:
\begin{align*}
f' (\theta) \leq \lim_{z\rightarrow 0+}f'(z)=0 \quad \mathrm{on} \quad (0,\pi).
\end{align*}
\end{proof}

\section{Proof of Theorem \ref{T3.1}} \label{App-A}
In this appendix, we present detailed arguments outlined in the proof of Theorem \ref{T3.1}. \newline

\noindent$\bullet$~Step A (Extraction of a convergent subsequence): In the sequel, we derive equicontinuity and relative compactness of $(\theta_{\varepsilon})_{\varepsilon>0}$ separately. \\

\noindent$\diamond$~Step A.1 (Equicontinuity):~For every $s,t\in [0,T]$ we have
\begin{align} 
\begin{aligned} \label{App-A-2}
&\|\theta_\varepsilon(t) - \theta_\varepsilon(s)\|_2^2   = \int_\Omega |\theta_\varepsilon(t,x) - \theta_\varepsilon(s,x)|^2 \di x \\
&\hspace{0.2cm}= \int_\Omega \left( \nu (x)(t-s) +  \int_s^t \kappa \int_\Omega  \psi_\varepsilon(x,y)h_\varepsilon(\theta_\varepsilon(\tau,y)- \theta_\varepsilon(\tau,x)) \di y \di \tau  \right)^2 \di x \\
& \hspace{0.2cm} \leq  2|t-s|^2 \|\nu \|_2^2 + 2\int_\Omega \left( \int_s^t \kappa \int_\Omega  \psi_\varepsilon(x,y) \di y \di \tau  \right)^2 \di x\\
& \hspace{0.2cm} \leq  2\left(  \|\nu \|_2^2 + C_\psi^2 \kappa^2 \right) |t-s|^2 .
\end{aligned}
\end{align}
This gives us a Lipschitz constant which does not depend on $\varepsilon$. \\

\noindent$\diamond$~Step A.2 (Relative compactness):~We now show that the sequence $(\theta_\varepsilon(t, \cdot))_{\varepsilon >0}$ is equicontinuous for every fixed $t\in [0,T]$. Since we consider all angles in $(-\pi,\pi]$, for every $v>0$ it holds
\begin{align} \label{App-A-2-1}
\begin{aligned}
&\frac{\di}{\di t}\| \theta_\varepsilon (t,\cdot-v) - \theta_\varepsilon (t,\cdot) \|_2^2 = 2 \int_\Omega (\theta_\varepsilon (t,x-v) - \theta_\varepsilon (t,x))(\partial_t \theta_\varepsilon (t,x-v) - \partial_t \theta_\varepsilon (t,x)) \di x \\
& \hspace{0.5cm} = 2 \int_\Omega (\theta_\varepsilon (t,x-v) - \theta_\varepsilon (t,x))(\nu(x-v) - \nu(x)) \di x  \\
& \hspace{0.7cm}+ 2\kappa\int_\Omega (\theta_\varepsilon (t,x-v) - \theta_\varepsilon (t,x))   \int_\Omega \psi_\varepsilon (x-v,y) h_\varepsilon (\theta_\varepsilon (t,y) - \theta_\varepsilon (t,x-v)) \di y  \di x \\
& \hspace{0.7cm}- 2\kappa\int_\Omega (\theta_\varepsilon (t,x-v) - \theta_\varepsilon (t,x))  \int_\Omega \psi_\varepsilon (x,y) h_\varepsilon (\theta_\varepsilon (t,y) - \theta_\varepsilon (t,x)) \di y \di  x \\
& \hspace{0.5cm} \leq  4\pi \| \nu(\cdot-v) - \nu(\cdot) \|_1 \\
& \hspace{0.7cm}+ 4\pi\kappa \iint_{\Omega^2} \left| (\psi_\varepsilon (x-v,y)- \psi_\varepsilon (x,y)) h_\varepsilon (\theta_\varepsilon (t,y) - \theta_\varepsilon (t,x-v)) \right| \di y \di x \\
& \hspace{0.7cm}+ 2\kappa\int_\Omega (\theta_\varepsilon (t,x-v) - \theta_\varepsilon (t,x))  \\
& \hspace{1cm}\times \int_\Omega \psi_\varepsilon (x,y) (h_\varepsilon (\theta_\varepsilon (t,y) - \theta_\varepsilon (t,x-v)) -h_\varepsilon (\theta_\varepsilon (t,y) - \theta_\varepsilon (t,x)) ) \di y \di  x  \\
& \hspace{0.5cm} =: {\mathcal J}_{11} + {\mathcal J}_{12}+ {\mathcal J}_{13}.
\end{aligned}
\end{align}
In the sequel, we estimate the terms ${\mathcal J}_{1i},~i=1,\cdots, 3$ one by one.\\

\noindent $\clubsuit$~Estimate of $\mathcal{J}_{11}$:~Using $\nu \in L^2(\Omega)$ and Lemma \ref{L2.13}, we have
\begin{align} \label{App-A-2-2}
\lim_{v \to 0} \mathcal{J}_{11} = 0.
\end{align}
Note that the term $\mathcal{J}_{11}$ does not depend on $\varepsilon$. \\
 
\noindent $\clubsuit$~Estimate of $\mathcal{J}_{12}$: It holds
\begin{align*} 
\begin{aligned}
\mathcal{J}_{12} &= 4\pi\kappa\iint_{\Omega^2} \left| (\psi_\varepsilon (x-v,y)- \psi_\varepsilon (x,y)) h_\varepsilon (\theta_\varepsilon (t,y) - \theta_\varepsilon (t,x-v)) \right| \di y \di x \\
& \leq 4\pi \kappa \sup_{x\in \Omega} \int_\Omega |\psi_\varepsilon (x-v,y)- \psi_\varepsilon (x,y)| \di y.
\end{aligned}
\end{align*}
Applying Lemma \ref{L2.5}, it follows 
\begin{align} \label{App-A-2-4}
\lim_{v \to 0} \mathcal{J}_{12} = 0 \quad \mathrm{uniformly \ in} \ \varepsilon.
\end{align}\\

\noindent $\clubsuit$~Estimate of $\mathcal{J}_{13}$: We use one-sided Lipschitz continuity of $-h_\varepsilon$ from Lemma \ref{L2.6-1} and $-h_\varepsilon(z) = h_\varepsilon(-z)$ to obtain
\begin{align} 
&\mathcal{J}_{13} = 2\kappa\int_\Omega (\theta_\varepsilon (t,x-v) - \theta_\varepsilon (t,x)) \nonumber \\
& \hspace{1cm}\times \int_\Omega \psi_\varepsilon (x,y) (h_\varepsilon (\theta_\varepsilon (t,y) - \theta_\varepsilon (t,x-v)) -h_\varepsilon (\theta_\varepsilon (t,y) - \theta_\varepsilon (t,x)) ) \di y  \di  x\nonumber \\
&\hspace{0.7cm}\leq  2\kappa\int_\Omega (\theta_\varepsilon (t,x-v) - \theta_\varepsilon (t,y) - (\theta_\varepsilon (t,x) - \theta_\varepsilon (t,y)))  \nonumber\\
& \hspace{1cm}\times \int_\Omega \psi_\varepsilon (x,y) ((-h_\varepsilon) (\theta_\varepsilon (t,x-v) - \theta_\varepsilon (t,y)) -(-h_\varepsilon) (\theta_\varepsilon (t,x) - \theta_\varepsilon (t,y)) ) \di y \di  x \nonumber\\
&\hspace{0.7cm}\leq 2 L\kappa \iint_{\Omega^2} (\theta_\varepsilon (t,x-v) - \theta_\varepsilon (t,x))^2 \psi_\varepsilon (x,y) \di y \di x \nonumber \\ \label{App-A-2-5}
&\hspace{0.7cm}\leq 2C_\psi L\kappa \| \theta_\varepsilon (t,\cdot-v) - \theta_\varepsilon (t,\cdot) \|_2^2.
\end{align}
Applying Gr\"onwall's lemma to \eqref{App-A-2-1} and using \eqref{App-A-2-5} yields 
\begin{align} \label{App-A-2-6}
\| \theta_\varepsilon (t,\cdot-v) - \theta_\varepsilon (t,\cdot) \|_2^2 \leq \left( \| \theta^\mathrm{in} (\cdot-v) - \theta^\mathrm{in} (\cdot) \|_2^2  + \mathcal{J}_{21}t + \mathcal{J}_{22}t \right) e^{2C_\psi L \kappa t}.
\end{align}
Using $\theta^\mathrm{in} \in L^2 (\Omega)$ and Lemma \ref{L2.13}, we get
\begin{align} \label{App-A-2-7}
\lim_{v\to 0}\| \theta^\mathrm{in} (\cdot-v) - \theta^\mathrm{in} (\cdot) \|_2^2 =0.
\end{align}
Combining \eqref{App-A-2-6} with \eqref{App-A-2-7}, \eqref{App-A-2-2} and \eqref{App-A-2-4} gives us the desired result. \\

By Kolmogorov-Riesz-Fr\'echet's theorem (see Proposition \ref{P3.2}),  the subset $(\theta_\varepsilon(t, \cdot))_{\varepsilon >0}$ of $L^2(\Omega)$ is relatively compact. Hence, we can apply the generalized theorem of Arzel\`a-Ascoli and extract a subsequence $(\theta_{\varepsilon_n})_{n\in \mathbb{N}}$, where $\varepsilon_n \rightarrow 0$ for $n\rightarrow \infty$, and find some $\theta \in C ([0,T]; L^2(\Omega))$ such that
\begin{align} \label{App-A-4}
\sup_{t\in [0,T]} \|\theta_{\varepsilon_n} (t) - \theta(t)\|_2 \rightarrow 0, \quad \mbox{as $n \to \infty$}.
\end{align}
\noindent$\bullet$~Step B (Limit solves the Cauchy problem):~We proceed by showing that $\theta$ is a solution to the SCKM \eqref{A-2}. We claim that
\begin{align}
\begin{aligned} \label{App-A-5}
\sup_{t\in [0,T]} \Bigg\| \kappa \int_0^t \int_\Omega \psi_\varepsilon  &(x,y)  h_\varepsilon (\theta_\varepsilon (\tau, y) - \theta_\varepsilon (\tau,x)) \di y \di \tau\\
 - \kappa \int_0^t &\int_\Omega \psi (x,y) h(\theta (\tau, y) - \theta (\tau,x)) \di y \di \tau \Bigg\|_2 \underset{\varepsilon \rightarrow 0}{\longrightarrow} 0,
\end{aligned}
\end{align}
where we omit the index $n\in\mathbb{N}$ in the subsequence $(\varepsilon_n)_{n\in \mathbb{N}}$ for simplicity. \\

{\it Proof of \eqref{App-A-5}}:~Note that
\begin{align}
&\int_\Omega \left( \int_0^t \int_\Omega \psi_\varepsilon (x,y) h_\varepsilon (\theta_\varepsilon (\tau, y) - \theta_\varepsilon (\tau,x) ) \di y \di \tau - \int_0^t \int_\Omega \psi (x,y) h(\theta (\tau, y) - \theta (\tau,x)) \di y \di \tau \right)^2   \di x \nonumber \\
& \hspace{0.2cm} = \int_\Omega \bigg( \int_0^t \int_\Omega (\psi_\varepsilon (x,y) - \psi(x,y)) h_\varepsilon(\theta_\varepsilon (\tau, y) - \theta_\varepsilon (\tau,x)) \di y \di \tau \nonumber \\
& \hspace{0.4cm} + \int_0^t \int_\Omega \psi (x,y) (h_\varepsilon(\theta_\varepsilon (\tau, y) - \theta_\varepsilon (\tau,x)) - h_\varepsilon(\theta(\tau, y) - \theta (\tau,x))) \di y \di \tau  \nonumber \\
& \hspace{0.4cm} + \int_0^t \int_\Omega \psi (x,y) (h_\varepsilon(\theta(\tau, y) - \theta(\tau,x)) - h(\theta(\tau, y) - \theta (\tau,x))) \di y \di \tau \bigg)^2   \di x  \nonumber \\
& \hspace{0.2cm} \leq 3t^2 \int_\Omega \left( \int_\Omega |\psi_\varepsilon (x,y) - \psi(x,y)|  \di y \right)^2 \di x \nonumber \\
& \hspace{0.4cm} + 3 \int_\Omega \left( \int_0^t \int_\Omega \psi (x,y) (h_\varepsilon(\theta_\varepsilon (\tau, y) - \theta_\varepsilon (\tau,x)) - h_\varepsilon(\theta (\tau, y) - \theta (\tau,x))) \di y \di \tau  \right)^2 \di x \nonumber \\
& \hspace{0.4cm} + 3 \int_\Omega \left( \int_0^t \int_\Omega \psi (x,y) (h_\varepsilon(\theta(\tau, y) - \theta(\tau,x)) - h(\theta(\tau, y) - \theta (\tau,x))) \di y \di \tau \right)^2 \di x \nonumber \\ \label{App-A-5-1}
& \hspace{0.2cm} =: \mathcal{J}_{21}+\mathcal{J}_{22} +\mathcal{J}_{23}.
\end{align}
Below, we estimate ${\mathcal J}_{2i},~i =1,2,3$ one by one. \\

\noindent $\clubsuit$ Estimate of $\mathcal{J}_{21}$:~We make use of Lemma \ref{L2.5} and Lebesgue dominated convergence theorem to find 
\begin{equation} \label{App-A-11}
\mathcal{J}_{21} \to 0 \quad \mbox{as $\varepsilon \to 0$}.
\end{equation} \\

\noindent $\clubsuit$ Estimate of $\mathcal{J}_{22}$:~We use Lemma \ref{L2.4-1} and Lemma \ref{L2.2} to obtain
\begin{align}
\begin{aligned} \label{App-A-7}
\mathcal{J}_{22} &=  3 \int_\Omega \left( \int_0^t \int_\Omega \psi (x,y) (h_\varepsilon(\theta_\varepsilon (\tau, y) - \theta_\varepsilon (\tau,x)) - h_\varepsilon(\theta (\tau, y) - \theta (\tau,x))) \di y \di \tau  \right)^2 \di x \\
&\leq  3\int_\Omega \left( C_h \int_0^t \int_\Omega \psi (x,y) (|\theta_\varepsilon (\tau, y) - \theta (\tau,y)|^{1-\alpha} +|\theta_\varepsilon (\tau, x) - \theta (\tau,x)|^{1-\alpha}) \di y \di \tau  \right)^2 \di x\\
&\leq  6C_h^2 \int_\Omega \left( \int_0^t \int_\Omega \psi (x,y)|\theta_\varepsilon (\tau, y) - \theta (\tau,y)|^{1-\alpha} \di y \di \tau  \right)^2 \di x \\
& \hspace{0.2cm} + 6C_h^2 \int_\Omega \left( \int_0^t \int_\Omega \psi (x,y)|\theta_\varepsilon (\tau, x) - \theta (\tau,x)|^{1-\alpha} \di y \di \tau  \right)^2 \di x \\
& =: \mathcal{J}_{221}+ \mathcal{J}_{222}.
\end{aligned}
\end{align}
\noindent $\spadesuit$ Estimate of $\mathcal{J}_{221}$: We use H\"older's inequality and Young's convolution inequality to see
\begin{align}
\begin{aligned} \label{App-A-8}
\mathcal{J}_{221} \leq  & 6 C_h^2 t \int_0^t \int_\Omega  \left( \int_\Omega \psi (x,y)|\theta_\varepsilon (\tau, y) - \theta (\tau,y)|^{1-\alpha} \di y \right)^2  \di x \di \tau \\
\leq &   6 C_h^2 t \int_0^t \| \Phi\|_1^2 \|\theta_\varepsilon (\tau) - \theta (\tau)\|_{2(1-\alpha)}^{2(1-\alpha)} \di \tau \leq 6C_\Phi^2 C_h^2 t^2 \sup_{t\in[0,T]} \|\theta_\varepsilon (t) - \theta (t)\|^{2(1-\alpha )}_2,
\end{aligned}
\end{align}
where $\Phi$ and $C_\Phi$ are defined as in \eqref{B-11-1} and \eqref{B-11-2}, respectively. In \eqref{App-A-8}, letting $\varepsilon \to 0$, we use \eqref{App-A-4} to see
\begin{equation} \label{App-A-9}
\mathcal{J}_{221} \to 0 \quad \mbox{as $\varepsilon \to 0$}.
\end{equation}

\noindent $\spadesuit$ Estimate of $\mathcal{J}_{222}$: We see
\begin{align}
\begin{aligned} \label{App-A-8-1}
\mathcal{J}_{222} \leq & 6C_\psi^2 C_h^2 t \int_0^t \int_\Omega |\theta_\varepsilon (\tau, x) - \theta (\tau,x)|^{2(1-\alpha)} \di x \di \tau \leq 6 C_\psi^2 C_h^2 t^2 \sup_{t\in[0,T]} \|\theta_\varepsilon (t) - \theta (t) \|_2^{2(1-\alpha)}. 
\end{aligned}
\end{align}
In \eqref{App-A-8-1}, letting $\varepsilon \to 0$, we use \eqref{App-A-4} to see
\begin{equation} \label{App-A-9-1}
\mathcal{J}_{222} \to 0 \quad \mbox{as $\varepsilon \to 0$}.
\end{equation}
Putting together \eqref{App-A-7}, \eqref{App-A-9} and \eqref{App-A-9-1}, we see
\begin{align} \label{App-A-9-2}
\mathcal{J}_{22} \to 0 \quad \mbox{as $\varepsilon \to 0$}.
\end{align}

\noindent $\clubsuit$ Estimate of $\mathcal{J}_{23}$: Note that 
\begin{align*}
\begin{aligned} 
\mathcal{J}_{23} \leq 3t^2 \sup_{t\in[0,T]} \int_\Omega \left( \int_\Omega \psi (x,y) (h_\varepsilon(\theta(t, y) - \theta(t,x)) - h(\theta(t, y) - \theta (t,x))) \di y  \right)^2  \di x.
\end{aligned}
\end{align*}
Using the Lebesgue dominated convergence theorem, we find 
\begin{align*}
\int_\Omega \psi (x,y) (h_\varepsilon(\theta(\tau, y) - \theta(\tau,x)) - h(\theta(\tau, y) - \theta (\tau,x))) \di y \to 0 \quad \mbox{as $\varepsilon \to 0$}.
\end{align*}
Since it holds
\begin{align*}
\left( \int_\Omega \psi (x,y) (h_\varepsilon(\theta(t, y) - \theta(t,x)) - h(\theta(t, y) - \theta (t,x)))  \di y \right)^2 \leq 4C_\psi^2,
\end{align*}
we can apply the Lebesgue dominated convergence theorem again to obtain
\begin{align} \label{App-A-13}
\mathcal{J}_{23} \to 0 \quad \mbox{as $\varepsilon \to 0$}.
\end{align}
In \eqref{App-A-5-1}, we use \eqref{App-A-11}, \eqref{App-A-9-2} and \eqref{App-A-13} to verify \eqref{App-A-5}. This completes the proof of Theorem \ref{T3.1}.

\bibliographystyle{amsplain}

\end{document}